\documentclass[12pt, a4]{amsart}
\usepackage{latexsym,amsmath,amssymb,amsthm,mathrsfs,color,mathtools}
\usepackage{float}
\usepackage{graphicx}
\usepackage{marginnote}
\usepackage{scalerel}
\usepackage{stackengine,wasysym}
\usepackage[bbgreekl]{mathbbol}
\usepackage{bbm}
\usepackage{tikz-cd}
\usetikzlibrary{graphs,decorations.pathmorphing,decorations.markings}
\usepackage[all]{xy}
\usepackage{stmaryrd}
\usepackage{chngcntr}
\usepackage{comment}
\counterwithin{figure}{section}
\usepackage{hyperref}
\hypersetup{colorlinks,%
citecolor=brown,%
filecolor=black,%
linkcolor=blue,%
urlcolor=black}
\usepackage[toc,page]{appendix}

\usepackage[shortalphabetic]{amsrefs}
\makeatletter
\def\append@label@year@{%
\safe@set\@tempcnta\bib@year
\edef\bib@citeyear{\the\@tempcnta}%
\ifnum\bib@citeyear>9
\append@to@stem{%
	\ifx\bib@year\@empty
	\else
	\@xp\year@short \bib@citeyear \@nil
	\fi
}%
\fi
}
\makeatother

\let\oldtocsection=\tocsection
\renewcommand{\tocsection}[2]{\hspace{0em}\oldtocsection{#1}{#2}}
\usepackage[a4paper]{geometry}

\def\upddots{\mathinner{\mkern 1mu\raise 1pt \hbox{.}\mkern 2mu
	\mkern 2mu \raise 4pt\hbox{.}\mkern 1mu \raise 7pt\vbox {\kern 7
		pt\hbox{.}}} }

\numberwithin{equation}{section}

\begin{document}
\setlength{\unitlength}{2.5cm}
\newtheorem{thm}{Theorem}[section]
\newtheorem{lm}[thm]{Lemma}
\newtheorem{prop}[thm]{Proposition}
\newtheorem{cor}[thm]{Corollary}
\newtheorem{conj}[thm]{Conjecture}
\newtheorem{specu}[thm]{Speculation}

\theoremstyle{definition}
\newtheorem{dfn}[thm]{Definition}
\newtheorem{eg}[thm]{Example}
\newtheorem{rmk}[thm]{Remark}
\newcommand{\ome}{\varpi}
\newcommand{\F}{\mathbf{F}}
\newcommand{\N}{\mathbbm{N}}
\newcommand{\R}{\mathbbm{R}}
\newcommand{\C}{\mathbbm{C}}
\newcommand{\Z}{\mathbbm{Z}}
\newcommand{\Q}{\mathbbm{Q}}
\newcommand{\Mp}{{\rm Mp}}
\newcommand{\Sp}{{\rm Sp}}
\newcommand{\GSp}{{\rm GSp}}
\newcommand{\GL}{{\rm GL}}
\newcommand{\PGL}{{\rm PGL}}
\newcommand{\SL}{{\rm SL}}
\newcommand{\SO}{{\rm SO}}
\newcommand{\Spin}{{\rm Spin}}
\newcommand{\GSpin}{{\rm GSpin}}
\newcommand{\Ind}{{\rm Ind}}
\newcommand{\Res}{{\rm Res}}
\newcommand{\Hom}{{\rm Hom}}
\newcommand{\End}{{\rm End}}
\newcommand{\msc}[1]{\mathscr{#1}}
\newcommand{\mfr}[1]{\mathfrak{#1}}
\newcommand{\mca}[1]{\mathcal{#1}}
\newcommand{\mbf}[1]{{\bf #1}}
\newcommand{\mbm}[1]{\mathbbm{#1}}
\newcommand{\into}{\hookrightarrow}
\newcommand{\onto}{\twoheadrightarrow}
\newcommand{\s}{\mathbf{s}}
\newcommand{\cc}{\mathbf{c}}
\newcommand{\bfa}{\mathbf{a}}
\newcommand{\id}{{\rm id}}
\newcommand{\g}{ \mathbf{g} }
\newcommand{\w}{\mathbbm{w}}
\newcommand{\Ftn}{{\sf Ftn}}
\newcommand{\p}{\mathbf{p}}
\newcommand{\bq}{\mathbf{q}}
\newcommand{\WD}{\text{WD}}
\newcommand{\W}{\text{W}}
\newcommand{\Wh}{{\rm Wh}}
\newcommand{\Whc}{{{\rm Wh}_\psi}}
\newcommand{\ggma}{\omega}
\newcommand{\sct}{\text{\rm sc}}
\newcommand{\Of}{\mca{O}^\digamma}
\newcommand{\gk}{c_{\sf gk}}
\newcommand{\Irr}{ {\rm Irr} }
\newcommand{\Irrg}{ {\rm Irr}_{\epsilon} }
\newcommand{\diag}{{\rm diag}}
\newcommand{\uchi}{ \underline{\chi} }
\newcommand{\Tr}{ {\rm Tr} }
\newcommand{\der}\de
\newcommand{\Stab}{{\rm Stab}}
\newcommand{\Ker}{{\rm Ker}}
\newcommand{\bfp}{\mathbf{p}}
\newcommand{\bfq}{\mathbf{q}}
\newcommand{\KP}{{\rm KP}}
\newcommand{\Sav}{{\rm Sav}}
\newcommand{\de}{{\rm der}}
\newcommand{\tnu}{{\tilde{\nu}}}
\newcommand{\lest}{\leqslant}
\newcommand{\gest}{\geqslant}
\newcommand{\tu}{\widetilde}
\newcommand{\tchi}{\tilde{\chi}}
\newcommand{\tomega}{\tilde{\omega}}
\newcommand{\Rep}{{\rm Rep}}
\newcommand{\cu}[1]{\textsc{\underline{#1}}}
\newcommand{\set}[1]{\left\{#1\right\}}
\newcommand{\ul}[1]{\underline{#1}}
\newcommand{\ol}[1]{\overline{#1}}
\newcommand{\wt}[1]{\widetilde{#1} }
\newcommand{\wtsf}[1]{\wt{\sf #1}}
\newcommand{\anga}[1]{{\left\langle #1 \right\rangle}}
\newcommand{\angb}[2]{{\left\langle #1, #2 \right\rangle}}
\newcommand{\wm}[1]{\wt{\mbf{#1}}}
\newcommand{\elt}[1]{\pmb{\big[} #1\pmb{\big]} }
\newcommand{\ceil}[1]{\left\lceil #1 \right\rceil}
\newcommand{\floor}[1]{\left\lfloor #1 \right\rfloor}
\newcommand{\val}[1]{\left| #1 \right|}
\newcommand{\aff}{ {\rm aff} }
\newcommand{\ex}{ {\rm ex} }
\newcommand{\exc}{ {\rm exc} }
\newcommand{\HH}{ \mca{H} }
\newcommand{\HKP}{ {\rm HKP} }
\newcommand{\std}{ {\rm std} }
\newcommand{\motimes}{\text{\raisebox{0.25ex}{\scalebox{0.8}{$\bigotimes$}}}}
\newcommand{\rest}{\lvert}
\newcommand{\ind}{ {\rm ind} }
\newcommand{\Cusp}{{\rm Cusp}}
\newcommand{\soc}{{\rm soc}}

\newcommand{\WF}{{\rm WF}}
\newcommand{\AZ}{{\rm AZ}}

\newcommand{\FF}{\mca{F}}
\newcommand{\tv}{\tilde{v}}
\newcommand{\betaa}{\pmb{\beta}}
\newcommand{\deltaa}{\pmb{\delta}}
\newcommand{\bepsilon}{\bar{\epsilon}}

\newcommand{\red}[1]{\textcolor{red}{#1}} 
\newcommand{\blue}[1]{\textcolor{blue}{#1}}

\title[Wavefront sets for genuine representations of $\rm GL$-covers]{Wavefront sets for genuine representations of $\rm GL$-covers of Kazhdan--Patterson or Savin types}

\author{Fan Gao}
\address{School of Mathematical Sciences, Zhejiang University, 866 Yuhangtang Road, Hangzhou,
China 310058}
\email{gaofan@zju.edu.cn}

\author{Runze Wang}
\address{School of Mathematical Sciences, Zhejiang University, 866 Yuhangtang Road, Hangzhou,
China 310058}
\email{wang\_runze@zju.edu.cn}

\author{Jiandi Zou}
\address{Institute for Advanced Study in Mathematics of Harbin Institute of Technology, Harbin, China}
\email{idealzjd@gmail.com}

\date{}
\subjclass[2020]{Primary 11F70, 22E50; Secondary 19C09}
\keywords{wavefront sets, metaplectic covers, $p$-adic groups, Bernstein--Zelevinsky theory}
\maketitle

\begin{abstract} 

First, we consider general Brylinski--Deligne covers of the $p$-adic general linear groups, and discuss the theory of Bernstein--Zelevinsky derivatives. We also recall the Zelevinsky-type classification of the irreducible  genuine spectrum for the Kazhdan--Patterson and Savin covers. Following this, for these two special families of covers, we determine the wavefront sets of their irreducible genuine representations, expressed in terms of the iterated degrees of the highest Bernstein--Zelevinsky derivatives. Finally, for Kazhdan--Patterson covers, we reinterpret this result on the wavefront set using a version of the local Langlands correspondence and the covering Barbasch--Vogan duality.

\end{abstract}

\tableofcontents

\section{Introduction}\label{Intro}

\subsection{Background}

Let $F$ be a finite extension of $\Q_p$. (In fact, we can also take $F$ to be a local field of sufficiently large characteristic $p$.) We consider the general linear group $\mbf{G}_r:=\GL_r$ and its $F$-points $G_r=\mbf{G}_r(F)$. The theory of Bernstein--Zelevinsky derivatives, as developed in \cite{bernstein1976representations,bernstein1977induced, zelevinsky1980induced}, has been an indispensable tool in understanding irreducible admissible representations of $G_r$. Coupled with the Zelevsinky classification of the irreducible spectrum $\Irr(G_r)$ in terms of the multisegments, the theory provides many insights on questions regarding the structure of induced representations, the branching problem, the degenerate Whittaker models, etc., to name a few; see \cite{lapid2016parabolic, chan2022restriction, gurevich2021restriction, gomez2017generalized} and references therein.

For any $\pi \in \Irr(G_r)$, one important representation-theoretic invariant associated with $\pi$ is its wavefront set ${\rm WF}(\pi)$. It gives the Gelfand--Kirillov dimension of $\pi$, which measures the ``size'' of $\pi$ in an asymptotic way, see for instance \cite{Sav94, dalal2025uniform}. The intimate relation between the theory of Bernstein--Zelevinsky derivatives and that of Whittaker models enables one to determine ${\rm WF}(\pi)$ in terms of the multisegment structure $\mfr{m}(\pi)$ underlying $\pi$ (i.e., the Zelevinsky classification of $\pi$), and one has 
\begin{equation} \label{E:WF1}
\WF(\pi)=\set{\lambda_{\mfr{m}(\pi)}},
\end{equation}
where $\lambda_{\mfr{m}(\pi)}$ is a partition of $r$ arising from the (iterated) Bernstein--Zelevinsky derivatives of $\pi$. On the other hand, in view of the local Langlands correspondence for $G_r$, one has
\begin{equation} \label{E:WF2}
{\rm WF}(\pi) = \set{d_{BV,G_r}(\mca{O}(\phi_{\AZ(\pi)}))}.
\end{equation}
Here, $\AZ(\pi)$ is the Aubert--Zelevinsky dual of $\pi$, $\phi_{\AZ(\pi)}$ is its L-parameter, and $d_{BV,G_r}(\mfr{p})=\mfr{p}^\top$ is the Barbasch--Vogan duality (just the transpose map for $G_r$). The equalities \eqref{E:WF1} and \eqref{E:WF2} were proved in \cite{moeglin1987modeles}. In fact, for general linear algebraic group $G$, there has been an abundance of literature regarding $\WF(\pi)$ for $\pi\in \Irr(G)$, see for example \cite{moeglin1987modeles, gomez2017generalized, GGS21,HLLS,CK,CMBO,CMBO24,CMBO25,JL16,JL25}  and the references therein.

In this paper, we escalate to consider degree-$n$ central covering group $\ol{G_r}$ of $G_r$, which sits in a short exact sequence
$$\begin{tikzcd}
	\mu_n \ar[r, hook] & \overline{G_r} \ar[r, two heads] & G_r
\end{tikzcd}$$ 
and arises from the Brylinski--Deligne (BD) framework \cite{BD01}. We call $\pi \in \Irrg(\ol{G_r})$ an irreducible ($\epsilon$-)genuine representation if $\mu_n$ acts via a fixed embedding $\epsilon:\mu_n \into \C^\times$. We are motivated from \eqref{E:WF1} and \eqref{E:WF2} above, and the goal is to establish similar equalities for $\pi\in \Irrg(\ol{G_r})$. Note that this is not a task to be accomplished mutatis mutandis. Indeed, it is well-known that genuine representations of $\ol{G_r}$ tend to possess larger wavefront set and thus bigger Gelfand--Kirillov dimension. This implies in particular that the left-hand sides of \eqref{E:WF1} and \eqref{E:WF2} tend to be bigger for genuine $\pi$ than its linear counterpart. 

We elaborate on the issues involved and also state our main result below.

\subsection{Main result}
First, the basic mechanism and principles underlying the Bernstein--Zelevinsky derivatives can be applied to any BD covering group $\ol{G_r}$. However, for a general BD covering group of $\ol{G_r}$, the potential non-commutativity of blocks of a covering Levi subgroup posits technical issues with such an application. Thus, starting from Section \ref{section resultKPS} we restrict to the Savin (S) covers and the Kazhdan--Patterson (KP) covers of $G_r$. For Savin covers of $G_r$, the blocks of a covering Levi subgroup do commute from its structure de facto; for KP covers, one can utilize the metaplectic tensor product construction (see \cite{Mez04, takeda2016metaplectic, kaplan2022classification}). In fact, we consider certain extended metaplectic tensor product in this paper, which seems to be more convenient and robust in order to adapt the Bernstein--Zelevinsky theory for the Savin covers or Kazhdan--Patterson covers.

For $\pi \in \Irrg(\ol{G_r})$, in order to compute $\WF(\pi)$ by determining $\lambda_{\mfr{m}(\pi)}$ as in the linear case for $G_r$, it is certainly a prerequisite to have a Zelevinsky-type classification for $\Irrg(\ol{G_r})$. Such a classification is given in the earlier work of Kaplan--Lapid--Zou \cite{kaplan2022classification} for Kazhdan--Patterson covers. It was already noted in loc. cit. that the classification holds for Savin covers as well. We mention in passing that for general BD cover of $G_r$, a Zelevinsky-type construction is not sufficient to describe the whole of $\Irrg(\ol{G_r})$. It is expected that there exist essential extra difficulties besides the non-commutativity issue. For instance, the parabolic induction of a unitary cuspidal representation might not be irreducible anymore for general BD cover of $G_r$. In any case, this entails a second reason why we restrict to the Savin covers and Kazhdan--Patterson covers, for which one has $\pi = Z(\mfr{m})$ with $\mfr{m}:=\mfr{m}(\pi)$ a multisegment for $\ol{G_r}$. 

For $\pi \in \Irrg(\ol{G_r})$, we write $\pi^{(k)}$ for the highest Bernstein--Zelevinsky derivative of $\pi$, for some $k\in \N_{\gest 1}$. Then we obtain iteratively a partition $\lambda_\pi:=(k_1, k_2, ..., k_s)$ arising from the iterated highest BZ derivatives, see \S \ref{section HighDegWhit} for a detailed description. For Savin covers or Kazhdan--Patterson covers, one has $\pi=Z(\mfr{m})$ as mentioned above, and it follows from Theorem \ref{thm highestderivative} that $\lambda_\pi$ can be computed from $\mfr{m}$ in an explicit way; thus we write $\lambda_{\mfr{m}}:=\lambda_\pi$. This also gives us our main result of the paper, which is an extension of \eqref{E:WF1}:

\begin{thm}[Theorem \ref{T:WF}] \label{T:01}
Let $\ol{G_r}$ be an $n$-fold cover of $G_r$ of Savin type or Kazhdan--Patterson type. Assume $p\nmid n$. Then for every $Z(\mfr{m}) \in \Irrg(\ol{G_r})$, one has
$$\WF(Z(\mfr{m})) = \set{\lambda_\mfr{m}}.$$ 
\end{thm}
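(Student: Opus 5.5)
We follow the strategy of Mœglin--Waldspurger for the linear case, adapted to covers, and prove two inequalities: that the orbit $\mca{O}_{\lambda_\mfr{m}}$ lies in $\WF(Z(\mfr{m}))$, and that every orbit in $\WF(Z(\mfr{m}))$ is dominated by $\lambda_\mfr{m}$. The bridge is the covering version of the Mœglin--Waldspurger theorem, which uses $p\nmid n$ and is available for BD covers. It says that the maximal orbits in the wavefront set are exactly the maximal nilpotent orbits $\mca{O}$ for which the degenerate Whittaker model $\pi_{\mca{O}}$ (generalized Whittaker/Jacquet module attached to $\mca{O}$) is nonzero, and that for such orbits the leading coefficient of the character expansion equals $\dim \pi_{\mca{O}}$. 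For $\GL_r$ every nilpotent orbit is a partition $\lambda$, and the degenerate Whittaker model for $\lambda$ can be computed by the Bernstein--Zelevinsky filtration. Concretely, $\pi_{\lambda}\neq 0$ is equivalent to the nonvanishing of an iterated derivative $(\cdots(\pi^{(\lambda_1)})^{(\lambda_2)}\cdots)$ taken in a suitable order, up to the usual reordering of the partition. This is exactly the content of \S\ref{section HighDegWhit}, where the highest-derivative partition $\lambda_\pi$ is related to degenerate Whittaker models.

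\textbf{Step 1 (nonvanishing).} Since each $\pi^{(k_i)}$ is a highest derivative, the iterated highest derivative along $\lambda_\pi=(k_1,\dots,k_s)$ is nonzero, so $\pi_{\lambda_\pi}\neq 0$. Here we need the exactness and Leibniz-type properties of the covering BZ derivatives developed earlier in the paper. We also need Theorem \ref{thm highestderivative}, which says that the highest derivative of $Z(\mfr{m})$ is again of the form $Z(\mfr{m}^-)$, with $\mfr{m}^-$ explicit and the degree $k$ explicit. This yields $\lambda_\mfr{m}=\lambda_\pi$ and shows that $\lambda_\mfr{m}$ is a partition, with $k_1\gest k_2\gest\cdots$. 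The monotonicity should follow from the explicit formula, since removing ends of segments can only decrease the number of segments contributing in the next step.

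\textbf{Step 2 (maximality).} If $\mu$ is a partition with $\pi_\mu\neq 0$, we show $\mu\lest\lambda_\pi$ in the dominance order. The plan is induction on $r$ via the BZ filtration of $\pi|_{\ol{P_r}}$, the mirabolic. A nonzero degenerate Whittaker functional for $\mu$ factors through some derivative $\pi^{(j)}$ with $j\lest k_1$, because derivatives beyond the highest vanish. This gives a nonzero model for $(\mu_2,\mu_3,\ldots)$ on $\pi^{(j)}$ with $\mu_1\lest j$. Induction then compares the highest-derivative partition of $\pi^{(j)}$ with $(k_2,\dots,k_s)$. For non-highest $j$ we need a monotonicity statement: for every $j$, each irreducible subquotient $\tau$ of $\pi^{(j)}$ satisfies $\lambda_\tau \lest (k_1,k_2,\dots)$ after inserting $j$ appropriately, i.e. $(j,\lambda_\tau)\lest\lambda_\pi$ in dominance. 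In the Zelevinsky picture, subquotients of $Z(\mfr{m})^{(j)}$ are the $Z(\mfr{m}')$ where $\mfr{m}'$ is obtained by shortening $j$ segments (counted with the covering degrees), and we would verify the dominance inequality combinatorially from the formula of Theorem \ref{thm highestderivative}.

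Combining the two steps shows that $\lambda_\mfr{m}$ is the unique maximal orbit, so $\WF(Z(\mfr{m}))=\set{\lambda_\mfr{m}}$. \textbf{The main obstacle} is Step 2, specifically controlling all subquotients of non-highest derivatives for covers. There the Leibniz rule is only available through the (extended) metaplectic tensor product for KP covers, and through commuting blocks for Savin covers, and the degree bookkeeping, where segments for the cover have lengths scaled by $n_\rho$, must be checked carefully. I would first reduce to a product of ladder or segment representations using the geometric lemma, and verify the dominance inequality segment by segment.
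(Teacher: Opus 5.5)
Your Step 1 is correct and is the paper's lower bound: Corollary \ref{cor generalizedWhitt} gives $\Wh_{\lambda_{\mfr m}}(Z(\mfr m))\neq 0$, and the injection \eqref{E:injsemiWh} transfers this to the degenerate Whittaker model. Two corrections here. First, in general you only have that injection, not an equivalence between degenerate Whittaker models and iterated derivatives. Second, Theorem \ref{thm highestderivative}(3) gives $Z(\mfr m)^{(k_1)}\simeq Z(\mfr m^-)$ only when $l_i\gest n(\rho_i)$ for all $i$; in general only the socle is $Z(\mfr m^-)$-isotypic, which still suffices to compute $\lambda_{\mfr m}$.

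\textbf{The gap is in Step 2.} First use \eqref{E:WHdWHsd} to replace degenerate Whittaker models by semi-Whittaker models; degenerate models at non-maximal orbits are not computed by derivatives. Then order $\mu$ with its largest part last, so that \eqref{E:r-BZ} gives $\Wh_\mu(\pi)=\Wh_{\mu'}(\pi^{(\mu_1)})$. Your induction then needs the following: for every $j\lest k_1$ and every irreducible subquotient $\tau$ of $Z(\mfr m)^{(j)}$, the partition obtained by adjoining a part $j$ to $\lambda_\tau$ is dominated by $\lambda_{\mfr m}$. Nothing in the paper supplies this, and your description of these $\tau$ is not justified. The Leibniz rule (Corollary \ref{cor productderivative}) and Proposition \ref{prop: der_of_ZLDelta} only show that $\tau$ is a composition factor of some standard module $\Pi(\mfr m_{\mathbf s})$. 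Here $\mfr m_{\mathbf s}$ is obtained by deleting $s_i\lest\min(l_i,n(\rho_i))$ points from the end of $\Delta_i$, with $\sum_i r_i^0 s_i=j$. These standard modules are reducible in general, so you would need $\lambda_\tau\lest\lambda_{\mfr m_{\mathbf s}}$ for \emph{every} composition factor of a standard module of the cover. That requires one of two things:
\begin{enumerate}
\item a covering version of the Zelevinsky ordering on composition factors of $\Pi(\mfr n)$, plus monotonicity of $\lambda$ under elementary operations, neither of which is established here; or
\item the bound $\WF(\Pi(\mfr n))\lest\lambda_{\mfr n}$ for standard modules itself.
\end{enumerate}

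\textbf{The missing idea} is that the second option makes your induction through non-highest derivatives unnecessary. Since $Z(\mfr m)\hookrightarrow\Pi(\mfr m)$, one has $\WF(Z(\mfr m))\lest\WF(\Pi(\mfr m))$, and this is exactly how the paper argues. It uses the M\oe glin--Waldspurger induction formula $\WF(\Pi(\mfr m))=\Ind_{M_\beta}^{G_r}\WF(\boxtimes_i Z(\Delta_i))$ and splits the tensor product with Lemma \ref{lem: BDWFtensorprod}. It then obtains $\WF(Z(\Delta_i))=((r_i^0 n(\rho_i))^{c_i},r_i^0 d_i)=\lambda_{\Delta_i}$, where $l_i=c_in(\rho_i)+d_i$, from Corollary \ref{cor generalizedWhittseg} and \eqref{E:WHdWHsd}. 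The induced orbit is $\sum_i\lambda_{\Delta_i}=\lambda_{\mfr m}$. Your closing suggestion (reduce to segments via the geometric lemma and compare segment by segment) points the right way. It should be applied directly to $\Pi(\mfr m)$, for instance by bounding the semi-Whittaker support of $J_{U_\nu}(\Pi(\mfr m))$ via the geometric lemma and Corollary \ref{cor generalizedWhittseg}, rather than used to control the derivatives of $Z(\mfr m)$.
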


In particular, this shows that for Savin or Kazhdan--Patterson covers, if $n_\alpha \gest r$, then every $\pi \in \Irrg(\ol{G_r})$ is generic, i.e., it possesses Whittaker models. Here $n_\alpha=n$ for KP-covers and $n_\alpha=n/\gcd(n,2)$ for S-covers. We also note that the equality in Theorem \ref{T:01} has been verified for Iwahori-spherical representations with positive Satake parameter, for arbitrary BD cover $\ol{G_r}$, see \cite{GW}.

Clearly, an extension of \eqref{E:WF2} to covering $\ol{G_r}$ is contingent on its local Langlands correspondence, which has not been established in general. Moreover, one needs to replace $d_{BV,G_r}$ by the covering Barbasch--Vogan duality $d_{BV, G_r}^{(n)}$, defined and studied in \cite{GLLS}.
In the last part of this paper, we try to reconcile Theorem \ref{T:01} with a form generalizing \eqref{E:WF2} to covers, using $d_{BV, G_r}^{(n)}$.
For this purpose, we further confine to Kazhdan--Patterson cover, and in this case we can define the L-parameter $\tilde{\phi}_\pi$ of any $\pi \in \Irrg(\ol{G_r})$ by utilizing the metaplectic correspondence between a KP-cover $\ol{G_r}$ and $G_r$ established for discrete series in \cite{flicker1986metaplectic} (see also \cite{zou2022metaplectic}), and the Zelevinsky classification for $\ol{G_r}$ as in \cite{kaplan2022classification}. Granted with these, we show in Corollary \ref{cor WFZmBVdual} that 
$$\WF(Z(\mfr{m})) = \set{d_{BV, G_r}^{(n)}(\mca{O}(\tilde{\phi}_{L(\mfr{m})}))},$$
where $L(\mfr{m})$ is just the Aubert--Zelevinsky dual of $Z(\mfr{m})$. This is the sought generalization of \eqref{E:WF2} to $\ol{G_r}$ mentioned above, and is also an improvement of \cite[Theorem 1.3]{GLLS}.

\subsection{Acknowledgement} The work of F. Gao and R. Wang is partially supported by the National Key R\&D Program of China (No. 2022YFA1005300) and also by NSFC-12171422, and the work of J. Zou is partially supported by the start-up funding of HIT (No. AUGA5710010825).

\section{Extended metaplectic tensor product and Whittaker models}

\subsection{Covers of $G_r$}

Let $F$ be a non-archimedean local field with discrete valuation $|\cdot|_F$ and residual characteristic $p$. Starting from \S \ref{section WFandLLC}, we further require that $F$ is of characteristic 0. We fix a positive integer $n$, such that the subgroup of $n$-th roots of unity in $F^{\times}$, denoted by $\mu_n$, is of cardinality $n$. Starting from \S \ref{section resultKPS}, we assume that we are in the tame case, meaning that $n$ is coprime to $p$.

\subsubsection{Finite central covers}

By an $n$-fold cover of an $\ell$-group $G$, we mean a central extension 
$$\begin{tikzcd}
	\mu_n \ar[r, hook] & \overline{G} \ar[r, two heads] & G
\end{tikzcd}$$ 
of $G$ by $\mu_n$ as topological groups. Let $\mbf{p}:\ol{G}\rightarrow G$ be the natural projection. A section $\mbf s:G\rightarrow \ol{G}$ is a continuous map such that $\mbf p\circ \mbf s=\id$. 

All representations in this article are smooth complex representations. We fix a faithful character $\epsilon:\mu_n\rightarrow \C^{\times}$. For an $n$-fold cover $\ol{H}$ of an $\ell$-group $H$, we consider ($\epsilon$-)genuine representations of $\ol{H}$, meaning that those representations with the $\mu_n$-action given by the multiplication by $\epsilon$. For an $\epsilon$-genuine representation $\rho$ of $\ol{H}$, its contragredient $\rho^{\vee}$ is an $\epsilon^{-1}$-genuine representation.

The commutator $\ol{G}\times \ol{G}\rightarrow \ol{G}$ factors through $G\times G$, and we denote by 
$$[\cdot, \cdot]:G\times G\rightarrow \ol{G}$$
 the resulting map. In particular, given two commuting elements $h_1,h_2$ in $G$, we have $[h_1,h_2]= \mbf s(h_1)\mbf s(h_2)\mbf s(h_1)^{-1}\mbf s(h_2)^{-1}\in\mu_n$, where $\mbf s$ is any section of $G$. 

A splitting of a closed subgroup $H$ of $G$ is a section as well as a group homomorphism $\mbf s:H\rightarrow\ol{H}$. Once we fix a (sometimes, canonical) choice of $\mbf s$, we identify $H$ with the image $\mbf s(H)$ as a subgroup of $\ol{H}$. One typical case is that when $\gcd(p,n)=1$, there exists a unique splitting of a pro-$p$-subgroup $H$, thus we may canonically regard $H$ as a subgroup of $\ol{G}$. Another typical example is that for a reductive group $G$, there exists a unique section $\mbf s$ on the set of unipotent elements of $G$ such that it is invariant up to $G$-conjugacy and its restriction to any unipotent subgroup $N$ of $G$ is a splitting. Using $\mbf{s}$, we canonically identify any unipotent subgroup $N$ of $G$ as a subgroup of $\ol{G}$.

\subsubsection{Finite central covers of $G_r$}

For a positive integer $r$, the group $G_r=\GL_r(F)$ is an $\ell$-group. Denote $\nu:=\val{\det(\cdot)}_F: G_r \to \C^\times$.

By an $n$-fold Brylinski--Deligne cover, we mean a central extension studied in \cite{BD01} associated with a certain pair $(D,\eta)$. See also \cite{weissman2018,gan2018langlands} for more references. We write BD-cover for short. 
For $G_r$, its $n$-fold BD covers are essentially parametrized by a Weyl-invariant bilinear form $B_Q: Y\times Y \to \Z$, where $Y$ is the cocharacter lattice of $G_r$ with a standard basis $\{e_i:1\lest i \lest r\}$.
The Weyl-invariance enforces that $B_{Q}$ is determined by $\mathbf{a},\mathbf{b}\in \Z$ such that 
$$B_Q(e_i,e_j)=\begin{cases}
	2\mathbf{a} & i=j, \\ \mathbf{b} & i \neq j.
\end{cases}$$
We have $Q(\alpha^\vee)=2\mbf{a}-\mbf{b}$ for any coroot $\alpha^\vee$, and write
$$n_\alpha:=\frac{n}{\gcd(n,Q(\alpha^\vee))}.$$

A Kazhdan--Patterson cover is a BD-cover of $G_r$ such that $Q(\alpha^\vee)=-1$ and a Savin cover is a BD-cover of $G_r$ associated with the pair $(\textbf{a}=-1,\textbf{b}=0)$. We write KP-cover or S-cover for short. It is clear that a KP-cover is determined by the number $\mbf  a \in\Z$, which corresponds to the notation $c$ used in \cite{KP84}. For a KP-cover, we write 
\begin{equation}\label{eq drgcd}
	d_r:=\gcd(n,2r\mbf{a}-r+1)	.
\end{equation}

We note that the BD-cover $\ol{G_r}$ associated with $\mbf{a}, \mbf{b}$ has the following heredity property. If we take $G_{r'}, r'\lest r$ and view it as a subgroup of $G_r$ in the natural way (i.e., as a block of a standard Levi subgroup of  $G_r$), then the cover $\ol{G_{r'}}$ obtained as the restriction from $\ol{G_r}$ is also associated with $\mbf{a}, \mbf{b}$. By convention, we have $\ol{G_{0}}:=\mu_n$. For a closed subgroup $H \subseteq G_r$, we write
$$H^{(n)}:=\set{h\in H: \ \det(h) \in F^{\times n}} \subseteq H$$
for the subgroup consisting of elements whose determinants are always a power of $n$.

\subsection{Metaplectic tensor product}

Let $M=G_{r_1}\times\dots\times G_{r_k}$ be a Levi subgroup of $G_r$, where $r=r_1+\dots+r_k$. Write $\beta=(r_1,\dots,r_k)$ for the related composition of $r$. 
Let $H=H_{1}\times\dots\times H_{k}$ be a subgroup of $M$, such that each $H_{i}$ is a closed subgroup of $G_{r_{i}}$. We call $H$ \emph{block compatible} if $[H_{i},H_{j}]=\{1\}$ for any  $1\lest i<j\lest k$. For example, $H$ is block compatible in the following two cases:
	\begin{itemize}
		\item[(i)] for every $i$, the determinant of every element in $H_{i}$ is an $n$-th power in $F^{\times}$;
		\item[(ii)] $\gcd(n,p)=1$, and for every $i$ the determinant of every element in $H_{i}$ is in $\mfr o _{F}^{\times}$.
	\end{itemize}
	Let $\rho_{i}$ be a genuine representation of $\ol{H_{i}}$ for each $i$. We take the tensor product $\rho_{1}\boxtimes\dots\boxtimes\rho_{k}$ as a representation of $\ol{H_{1}}\times\dots\times \ol{H_{k}}$, trivial on $$\Xi=\{(\zeta_{1},\dots,\zeta_{k})\in\mu_{n}\times\dots\times\mu_{n}\mid \zeta_{1}\dots\zeta_{k}=1\}.$$
	If $H$ is block compatible, then we have the isomorphism 
	$$\ol{H}\simeq \ol{H_{1}}\times\dots\times \ol{H_{k}}/\Xi,$$
	and in this case $\rho_{1}\boxtimes\dots\boxtimes\rho_{k}$ gives rise to a genuine representation of $\ol{H}$, and every irreducible genuine representation of $\ol{H}$ is of this form.

\subsubsection{Tensor product for S-covers}

If $\ol{G_r}$ is an S-cover, then any subgroup $H$ as above is block compatible. This is because the parameter $\mbf{b}=0$ for Savin covers, and thus $[G_{r_i}, G_{r_j}]=1$ for $i \ne j$. Thus we may always take the tensor product of genuine representions of $\ol{H_i}$ and get a representation of $\ol{H}$. In particular, if each representation of $\ol{H_i}$ is of finite length (resp. irreducible), so is the tensor product for $\ol{H}$.

\subsubsection{Metaplectic tensor product for KP-covers}

If $\ol{G_r}$ is a KP-cover, then things become slightly more complicated, since KP covers are not block-compatible. However, given a locally finite genuine representation of a Levi subgroup of $\ol{G_{r_i}}$ with fixed ``central character" for each $i$, there is still a way of defining the so-called ``metaplectic tensor product'', once a compatible ``central character" of $\ol{G_r}$ is fixed (\emph{cf.} \cite{kable2001tensor, Mez04, takeda2016metaplectic, takeda2017remarks, Cai19,kaplan2022classification}). More precisely, let $M_i$ be a Levi subgroup of $G_{r_i}$, and let $M_\beta:=M_1\times\dots\times M_k$ be the resulting Levi subgroup of $G_r$ and $\omega_i$ a genuine character of $Z(\ol{G_{r_i}})$  for $i=1,\dots,k$. 
We write
\[M_{\beta}^{[n]}=M_{1}^{(n)}\times\dots \times M_{k}^{(n)}.\] 
In particular, if $\beta=(r)$ with $k=1$, then we have $M_\beta^{[n]} = M_1^{(n)}$. 

A genuine character $\omega$ of $Z(\ol{G_r})$ is called \emph{compatible with} $(\omega_1,\dots,\omega_k)$ if we have \[(\omega_1\rest_{Z(\ol{G_{r_1}})\cap \ol{G_{r_1}^{(n)}}}\boxtimes\dots\boxtimes\omega_k\rest_{Z(\ol{G_{r_k}})\cap \ol{G_{r_k}^{(n)}}})\rest_{Z(\ol{G_{r}})\cap \ol{G_{r}^{(n)}}}=\omega\rest_{Z(\ol{G_{r}})\cap \ol{G_{r}^{(n)}}},\]
noting that we have  \[\mbf{p}(Z(\ol{G_{r}})\cap \ol{G_{r}^{(n)}})\subset \mbf{p}(Z(\ol{G_{r_1}})\cap \ol{G_{r_1}^{(n)}})\times\dots\times \mbf{p}(Z(\ol{G_{r_k}})\cap \ol{G_{r_k}^{(n)}})\]
and the right-hand side is block compatible. Let $\Rep_{\omega}^{\rm fl}(\ol{M})$ (resp. $\Rep_{\omega_i}^{\rm fl}(\ol{M_{i}})$) be the category of finite length representations with each irreducible subquotient having central character $\omega$ (resp. $\omega_i$). One has a multifunctor
\[(\boxtimes_{i=1}^{k})_{\omega}:\Rep_{\omega_1}^{\rm fl}(\ol{M_{1}})\times\dots\times \Rep_{\omega_k}^{\rm fl}(\ol{M_{k}})\rightarrow \Rep_{\omega}^{\rm fl}(\ol{M}),\ (\rho_{1},\dots,\rho_k)\mapsto(\rho_1\boxtimes\dots\boxtimes\rho_k)_\omega\] 
called the metaplectic tensor product (MTP). Note that in \cite{kaplan2022classification}, a more general category of locally $\omega_i$ (resp. locally $\omega$) representations are considered. But for us, finite length representations will be enough. 

Let $\Irr_{\omega}(\ol{M})$ (resp. $\Irr_{\omega_i}(\ol{M_{i}})$) be the set of equivalence classes of irreducible representations having central character $\omega$ (resp. $\omega_i$). The above multifunctor induces a bijection
\[(\boxtimes_{i=1}^{k})_{\omega}:\Irr_{\omega_1}(\ol{M_{1}})\times\dots\times \Irr_{\omega_k}(\ol{M_{k}})\rightarrow \Irr_{\omega}(\ol{M}),\ (\rho_{1},\dots,\rho_k)\mapsto(\rho_1\boxtimes\dots\boxtimes\rho_k)_\omega\]
The explicit definition of the MTP, albeit general and precise, is sometimes considered as clumsy in dealing with certain restriction and induction functors. To facilitate the definition of an extended version of MTP, we recall the following result.

\begin{prop}[\cite{kaplan2022classification}*{Section 4}, \cite{zou2022metaplectic}*{Theorem 2.1}] \label{P:mot-ex}
Let $\ol{G_r}$ be a KP cover, and we retain the above notation.
\
\begin{enumerate}
\item Given $\rho_{i}\in\Irr_{\omega_i}(\ol{M_i})$, we have
\begin{equation}\label{eqMTPext}
\Ind_{\ol{M_{\beta}^{[n]}}}^{\ol{M}}(\rho_1\rest_{\ol{M_1^{(n)}}}\boxtimes\dots\boxtimes\rho_k\rest_{\ol{M_k^{(n)}}})\simeq m_{(r_1,\dots,r_k)}\cdot \bigoplus_{\omega}(\rho_1\boxtimes\dots\boxtimes\rho_k)_\omega, 
\end{equation}
where $m_{(r_1,\dots,r_k)}$ is an explicit positive integer depending only on $\ol{G_r}$ and $r_1,\dots, r_k$ but not the representations involved, and $\omega$ in the direct sum ranges over all the characters of $Z(\ol{G_r})$ compatible with $(\omega_1,\dots,\omega_k)$.

\item The group of characters $X^{*}(F^{\times}/F^{\times n})$, when viewed as characters of $M/M^{(n)}$, acts transtively on the set of MTPs occurring in the right-hand side of \eqref{eqMTPext}. 

\item For each $i$, let $\chi_i\in X^{*}(F^{\times}/F^{\times n})$ being regarded as a character of of $M_i/M_i^{(n)}$, then $(\rho_1\chi_1\boxtimes\dots\boxtimes\rho_k\chi_k)_\omega\simeq (\rho_1\boxtimes\dots\boxtimes\rho_k)_\omega$.

\end{enumerate}

\end{prop}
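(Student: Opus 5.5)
We reduce everything to the structure of the MTP as constructed in \cite{kaplan2022classification} (following \cite{Mez04, takeda2016metaplectic}). Set $\ol{M}^{\natural}:=Z(\ol{G_r})\,\ol{M_\beta^{[n]}}$. This is a normal subgroup of $\ol{M}$ of finite index, since $M/M^{(n)}$ is finite abelian. By construction, $(\rho_1\boxtimes\dots\boxtimes\rho_k)_\omega$ is obtained in two steps. First, $\omega$ is extended across an irreducible constituent $\pi^\natural$ of $\rho_1\rest_{\ol{M_1^{(n)}}}\boxtimes\dots\boxtimes\rho_k\rest_{\ol{M_k^{(n)}}}$; this restricted tensor product makes sense because $M_\beta^{[n]}$ is block compatible by case (i). Second, one induces from $\ol{M}^\natural$ to $\ol{M}$, using the compatibility of $\omega$ to see that the two pieces glue. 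The plan is to compute the left-hand side of \eqref{eqMTPext} by inducing in stages:
$$\Ind_{\ol{M_\beta^{[n]}}}^{\ol{M}}=\Ind_{\ol{M}^\natural}^{\ol{M}}\circ\Ind_{\ol{M_\beta^{[n]}}}^{\ol{M}^\natural}.$$

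\textbf{Inner induction.} The quotient $\ol{M}^\natural/\ol{M_\beta^{[n]}}$ is the finite abelian group $Z(\ol{G_r})/(Z(\ol{G_r})\cap\ol{M_\beta^{[n]}})$, and it acts centrally. So the inner induction decomposes as a direct sum over all extensions of the character $\omega_0:=\omega\rest_{Z(\ol{G_r})\cap \ol{M_\beta^{[n]}}}$ to $Z(\ol{G_r})$. The compatibility condition on the central characters says precisely that these extensions are the $\omega$ compatible with $(\omega_1,\dots,\omega_k)$. The restrictions $\rho_i\rest_{\ol{M_i^{(n)}}}$ are finite direct sums of irreducibles, by Clifford theory for the finite abelian quotient $M_i/M_i^{(n)}$. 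Applying the outer induction to each summand and using Mackey theory then gives copies of the MTPs $(\rho_1\boxtimes\dots\boxtimes\rho_k)_\omega$.

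\textbf{Multiplicity.} The main obstacle is showing that every $\omega$-isotypic piece is a multiple of a single MTP with a multiplicity independent of the $\rho_i$. I would use Frobenius reciprocity together with Clifford theory.

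First, the Clifford constituents of $\rho_i\rest_{\ol{M_i^{(n)}}}$ form one $M_i$-orbit, with equal multiplicities.

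Second, the independence of the constituent chosen in the construction follows because conjugating by $M$ permutes these constituents, and induction from $\ol{M}^\natural$ to $\ol{M}$ is insensitive to conjugation.

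Third, the total multiplicity is then the product of the number of $\ol{M_i^{(n)}}$-constituents (with multiplicity) over the blocks, divided by the corresponding count for $\ol{M}^\natural$. Each count is expressed through an index such as $[M_i:Z(\ol{M_i})M_i^{(n)}]$, or rather through the sizes of the relevant maximal isotropic subgroups of the finite Heisenberg-type groups $\ol{Z(M_i)}M_i^{(n)}\backslash\ol{M_i}$. These depend only on the cocycle, hence only on $\ol{G_r}$ and the $r_i$. This computation is \cite{kaplan2022classification}*{Section 4}, and I would import it. Irreducibility of each MTP from \cite{kaplan2022classification} ensures the pieces are isotypic.

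\textbf{Parts (2) and (3).} For (2), twisting by $\chi\in X^*(F^\times/F^{\times n})$ is trivial on $M^{(n)}\supseteq M_\beta^{[n]}$, so it fixes the left-hand side. On the right-hand side, twisting by $\chi\circ\det$ changes the central character $\omega$ by $\chi\circ\det\rest_{Z(\ol{G_r})}$. The compatible $\omega$ form a torsor under the characters of $Z(G_r)/(Z(G_r)\cap M_\beta^{[n]})$, and every such character extends to a character of $F^\times/F^{\times n}$ via $\det$. Hence the action on the MTPs is transitive. For (3), twisting $\rho_i$ by $\chi_i$ leaves $\rho_i\rest_{\ol{M_i^{(n)}}}$ unchanged and keeps $\omega_i$ on $Z(\ol{G_{r_i}})\cap\ol{G_{r_i}^{(n)}}$. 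So the data entering the construction of $(\cdot)_\omega$ are literally the same, and the two MTPs with the same $\omega$ coincide, by the bijectivity statement recalled above.
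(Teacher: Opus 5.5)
The paper gives no argument of its own here: the proposition is quoted from Kaplan--Lapid--Zou, \S 4, and Zou, Theorem 2.1. Your attempt to rederive it has a genuine gap exactly at the step you call the main obstacle.

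\textbf{The construction you describe does not produce the MTP.} The representation $\Ind_{\ol{M}^\natural}^{\ol{M}}(\omega\pi^\natural)$ is $e$ copies of the MTP, where $e$ is the multiplicity of $\pi^\natural$ in the restriction of the MTP to $\ol{M_\beta^{[n]}}$. One has $e>1$ already for $k=r=1$. There $\ol{G_1^{(n)}}$ is central, so $\ol{M}^\natural=Z(\ol{G_1})$. Then $\Ind_{Z(\ol{G_1})}^{\ol{G_1}}\omega$ is $[\ol{G_1}:Z(\ol{G_1})]^{1/2}$ copies of the unique genuine irreducible representation with central character $\omega$; for example, three copies when $n=3$, $\mbf a=1$, $p\neq 3$.

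\textbf{Irreducibility of the MTP does not make the $\omega$-pieces isotypic.} Write $Z(G_r):=\mbf{p}(Z(\ol{G_r}))$. Clifford theory only tells you that the constituents of $\Ind_{\ol{M}^\natural}^{\ol{M}}(\omega\tau)$ are twists $\pi\chi$ by characters $\chi$ of $M/Z(G_r)M_\beta^{[n]}$. Nothing you wrote prevents these twists from being pairwise non-isomorphic. The missing input is the following statement $(\ast)$: $\pi\chi\simeq\pi$ for every genuine $\pi\in\Irr(\ol{M})$ and every such $\chi$. Equivalently, a genuine irreducible representation of $\ol{M}$ is determined by its central character on $Z(\ol{G_r})$ together with its restriction to $\ol{M_\beta^{[n]}}$.

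This cannot be bypassed. Since $\pi$ and $\pi\chi$ are constituents of the same EMTP with the same $\omega$, $(\ast)$ is equivalent to the isotypicity half of (1). It is the KP-specific fact that makes the MTP well defined. Your proofs of (2) and (3) also use it tacitly: to identify the twist of one MTP with another, and to identify the MTPs of $(\rho_i)$ and $(\rho_i\chi_i)$. If you import something from KLZ, it has to be this uniqueness statement, not irreducibility.

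\textbf{The multiplicity computation.} Granting $(\ast)$, the independence of the multiplicity from the representations is short, and your description of it should be replaced. The quotient $\ol{M_i}/\ol{Z(M_i)}\,\ol{M_i^{(n)}}$ is a quotient of $F^\times/F^{\times n}$, hence abelian, so no Heisenberg group appears there. Moreover, the individual counts are not invariants of the group.

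Write $\rho_i\rest_{\ol{M_i^{(n)}}}=e_i(\tau_{i,1}\oplus\dots\oplus\tau_{i,t_i})$; the pair $(e_i,t_i)$ may a priori depend on $\rho_i$. The MTP $\pi$ restricts to $\ol{M_\beta^{[n]}}$ as $e$ times the sum over the $M$-orbit of $\tau_{1,1}\boxtimes\dots\boxtimes\tau_{k,1}$. This orbit has $\prod_i t_i$ elements, because $M_i$ acts trivially by conjugation on $\ol{M_j^{(n)}}$ for $j\neq i$. Frobenius reciprocity then gives multiplicity $e\prod_i e_it_i$.

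The Gelbart--Knapp identity says that $e^2t$ equals the number of characters $\chi$ of the finite quotient with $\pi\chi\simeq\pi$. Combined with $(\ast)$, it gives
\[
e_i^2t_i=[M_i:Z(G_{r_i})M_i^{(n)}],\qquad e^2\textstyle\prod_it_i=[M:Z(G_r)M_\beta^{[n]}].
\]
Hence the $t_i$ cancel:
\[
m_{(r_1,\dots,r_k)}=\Big([M:Z(G_r)M_\beta^{[n]}]\,\prod_{i=1}^k[M_i:Z(G_{r_i})M_i^{(n)}]\Big)^{1/2}.
\]
In the tame case with $M_i=G_{r_i}$ this equals $n^{2k}/(d_r\prod_i d_{r_i})$, consistent with the counts the paper uses later.

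\textbf{Two smaller points.} Both follow from one fact: $Z(G_r)$ consists of the scalars $z$ with $z^N\in F^{\times n}$, where $N\equiv\pm1\pmod r$ is the exponent entering the definition of $d_r$.
\begin{enumerate}
\item It gives $Z(\ol{G_r})\cap\ol{M_\beta^{[n]}}=Z(\ol{G_r})\cap\ol{G_r^{(n)}}$. Your inner induction needs this to match the paper's compatibility condition.
\item It makes $z\mapsto z^r$ injective on $Z(G_r)/(Z(G_r)\cap M_\beta^{[n]})$. This is what lets every character there come from $X^*(F^\times/F^{\times n})$ in your proof of (2).
\end{enumerate}
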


\begin{rmk}

To uniform the notation, when $\ol{G_r}$ is either a KP-cover or an S-cover, we write 
\[(\boxtimes_{i=1}^{k})_{\omega}:\ (\rho_{1},\dots,\rho_k)\mapsto(\rho_1\boxtimes\dots\boxtimes\rho_k)_\omega\] 
both for the MTP above (for KP-covers) and the standard tensor product (for S-covers), where in the KP-cover case we assume $\rho_i\in \Rep_{\omega_i}^{\rm fl}(\ol{M_{i}})$  with $\omega$ compatible with $(\omega_1,\dots,\omega_k)$ and in the S-cover case we assume $\rho_i\in \Rep(\ol{M_{i}})$. 
Here in the S-cover case the presence of $\omega$ is only symbolic and does not embody any mathematical content.
\end{rmk}

\subsection{Extended metaplectic tensor product}

Motivated by the Kazhdan--Patterson case, especially Proposition \ref{P:mot-ex}, we consider the following extended metaplectic tensor product (EMTP for short), which works for general BD-covers. 
By definition, the EMTP is just the map
\begin{equation*}
\begin{aligned}\ol{\boxtimes_{i=1}^{k}}:&\Rep(\ol{M_{1}})\times\dots\times \Rep(\ol{M_{k}})\rightarrow \Rep(\ol{M}),\ \\
&(\rho_{1},\dots,\rho_k)\mapsto\rho_1\ol{\boxtimes}\dots\ol{\boxtimes}\rho_k:=\Ind_{\ol{M_{\beta}^{[n]}}}^{\ol{M}}(\rho_1\rest_{\ol{M_1^{(n)}}}\boxtimes\dots\boxtimes\rho_k\rest_{\ol{M_k^{(n)}}}).
\end{aligned}
\end{equation*} 
When $\rho_1,\dots,\rho_k$ are irreducible, the right-hand side is a direct sum of finitely many irreducible representations, which in particular contains the original MTP for S-cover and KP-covers (\emph{cf.} Corollary \ref{cor MTP} later on). When $k=1$, the definition still makes sense, and it becomes
\[\Ind_{\ol{M^{(n)}}}^{\ol{M}}(\rho\rest_{\ol{M^{(n)}}}).\]
Later we will also need to work in a more general setting. Thus, let $H_i$ be a closed subgroup of $M_i$ for each $i$ such that $H_i/H_i^{(n)}\simeq M_i/M_i^{(n)}\simeq F^{\times}/F^{\times n}$. For $r=r_1+\dots+r_k$ and $H_i$ a subgroup of $G_{r_i}$, write $H:=H_1\times\dots\times H_k$ and $H_{\beta}^{[n]}:=H_1^{(n)}\times\dots\times H_k^{(n)}$. We define similarly the EMTP by
\begin{equation*}
\begin{aligned}\ol{\boxtimes_{i=1}^{k}}:&\Rep(\ol{H_{1}})\times\dots\times \Rep(\ol{H_{k}})\rightarrow \Rep(\ol{H}),\ \\
&(\rho_{1},\dots,\rho_k)\mapsto\rho_1\ol{\boxtimes}\dots\ol{\boxtimes}\rho_k:=\Ind_{\ol{H_{\beta}^{[n]}}}^{\ol{H}}(\rho_1\rest_{\ol{H_1^{(n)}}}\boxtimes\dots\boxtimes\rho_k\rest_{\ol{H_k^{(n)}}}).
\end{aligned}
\end{equation*} 
The following lemma is a special case of the result in \cite{gelbart1982indistinguishability}*{Section 2}, when we take $G, H$ (in the notation there) to be $\ol{M}$ and $\ol{M_\beta^{(n)}}$ respectively.

\begin{lm}\label{lem MTP corr}
\begin{enumerate}

\item Given an irreducible representation $\pi$ of $\ol{M}$, its restriction to $\ol{M_{\beta}^{[n]}}$ decomposes into a finite sum of irreducible representations. These irreducible representations of $\ol{M_{\beta}^{[n]}}$ are $M$-conjugate to each other.

\item Given an irreducible representation $\tau$ of $\ol{M_{\beta}^{[n]}}$, its induction to $\ol{M}$ decomposes into a finite sum of irreducible representations. These representations are equivalent to each other up to twisting by a character of $M/M_{\beta}^{[n]}$.

\item The restriction and induction from (1) and (2) induce a bijection between $M$-conjugacy classes of irreducible representations $\tau$ of $\ol{M_{\beta}^{[n]}}$ and $X^{*}(M/M_{\beta}^{[n]})$-classes of irreducible representations $\pi$ of $\ol{M}$.

\end{enumerate}

\end{lm}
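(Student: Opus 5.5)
The plan is to reduce everything to Clifford theory for a finite-index open normal subgroup with abelian quotient, following \cite{gelbart1982indistinguishability}*{Section 2}. First I will record the structural facts. The subgroup $\ol{M_{\beta}^{[n]}}$ is open and normal in $\ol{M}$, since it is the preimage of the kernel of the homomorphism
$$M\to \prod_{i=1}^k F^\times/F^{\times n},\qquad (m_i)_i\mapsto (\det m_i)_i .$$
The quotient $\ol{M}/\ol{M_{\beta}^{[n]}}\simeq M/M_{\beta}^{[n]}$ is a finite abelian group; it is finite because $F^\times/F^{\times n}$ is finite for $F$ non-archimedean with $\mu_n\subset F$.

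For (1), let $\pi$ be irreducible and smooth. Pick a vector $v$ and look at the $\ol{M_{\beta}^{[n]}}$-span of the finitely many translates $gv$, with $g$ running over coset representatives. This shows $\pi|_{\ol{M_{\beta}^{[n]}}}$ is finitely generated, so it has an irreducible quotient $\tau$. By Frobenius reciprocity for open finite-index subgroups (where $\Ind=\ind$), $\pi$ embeds into $\Ind\tau$. Mackey theory gives
$$\Ind\tau|_{\ol{M_{\beta}^{[n]}}}\simeq\bigoplus_{g} {}^g\tau,$$
and each ${}^g\tau$ is irreducible. Hence $\pi|_{\ol{M_{\beta}^{[n]}}}$ is a subrepresentation of a finite semisimple representation, so it is a finite direct sum of conjugates ${}^g\tau$. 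This proves (1).

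For (2), let $\tau$ be irreducible and set $\Pi=\Ind\tau$. Its restriction is finite semisimple, so $\Pi$ has finite length. To see that $\Pi$ is semisimple, I will use that the index is finite: averaging over coset representatives splits any $\ol{M}$-subrepresentation of $\Pi$ off by an $\ol{M}$-equivariant complement, as in Maschke. Now let $\pi,\pi'$ be two irreducible summands of $\Pi$. By (1), their restrictions both contain $\tau$, up to conjugacy. Frobenius reciprocity then gives nonzero maps $\pi\to\Ind\tau$ and $\pi'\to\Ind\tau$. To relate $\pi$ and $\pi'$ I will use
$$\Ind\tau\simeq \pi\otimes\Ind(\mathbf 1),\qquad \Ind(\mathbf 1)=\bigoplus_{\chi\in X^*(M/M_\beta^{[n]})}\chi .$$
This holds because $\pi\otimes\Ind\mathbf 1\simeq\Ind(\pi|)$, which contains $\Ind\tau$ as a summand, and comparing constituents shows every irreducible summand of $\Ind\tau$ is of the form $\pi\chi$. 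Thus $\pi'\simeq\pi\chi$ for some $\chi$.

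For (3), the maps are $\pi\mapsto$ (the $M$-orbit of constituents of $\pi|$) and $\tau\mapsto$ (the $X^*$-orbit of constituents of $\Ind\tau$). Both are well defined by (1) and (2), using $\Ind({}^g\tau)\simeq\Ind\tau$ and $(\pi\chi)|=\pi|$. Frobenius reciprocity, namely that $\tau\subset\pi|$ if and only if $\pi\subset\Ind\tau$, shows the two maps are mutually inverse. The main care point is admissibility: admissibility is not needed for the finiteness arguments, but the semisimplicity step requires the quotient to be finite. I expect that averaging step to be the only delicate point, and it works because the quotient is a finite abelian group.
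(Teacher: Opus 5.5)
Your proposal is correct and follows essentially the paper's route: the paper gives no argument of its own but cites the Clifford theory of Gelbart--Knapp (Section 2) for the finite-index open normal subgroup $\ol{M_{\beta}^{[n]}}\subset\ol{M}$ with finite abelian quotient, and your argument (finite generation, Frobenius reciprocity, Mackey, averaging, and twisting via $\Ind(\mathbf 1)=\bigoplus_\chi\chi$) is exactly that theory. One slip to fix: the displayed isomorphism $\Ind\tau\simeq\pi\otimes\Ind(\mathbf 1)$ is false whenever $\pi|_{\ol{M_{\beta}^{[n]}}}$ is reducible, since in fact $\pi\otimes\Ind(\mathbf 1)\simeq\Ind(\pi|_{\ol{M_{\beta}^{[n]}}})\simeq N\cdot\Ind\tau$ with $N$ the length of the restriction; your justification, however, only uses that $\Ind\tau$ is a direct summand of $\bigoplus_\chi\pi\chi$, which is true and suffices.
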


\begin{cor}\label{cor MTP}
Let $\rho_{i}\in\Irr_{\epsilon}(\ol{M_i})$ for $i=1,\dots,k$. 

\begin{enumerate}
\item The representation $\rho_1\rest_{\ol{M_1^{(n)}}}\boxtimes\dots\boxtimes\rho_k\rest_{\ol{M_k^{(n)}}}$ is a direct sum of irreducible representations of the form $\tau:=\tau_1\boxtimes\dots\boxtimes\tau_k$ of the same multiplicity, where each $\tau_i$ is an irreducible constituent of $\rho_i\rest_{\ol{M_i^{(n)}}}$, $i=1,\dots,k$. 

\item Let $\pi$ be an irreducible representation of $\ol{M}$ whose restriction to $\ol{M_{\beta}^{[n]}}$ intersects $\rho_1\rest_{\ol{M_1^{(n)}}}\boxtimes\dots\boxtimes\rho_k\rest_{\ol{M_k^{(n)}}}$. Then the EMTP $\rho_1\ol{\boxtimes}\dots\ol{\boxtimes}\rho_k$ is a finite direct sum of representations of the form $\pi\cdot\chi$, where $\chi\in X^{*}(M/M_{\beta}^{[n]})$. 

\end{enumerate}

\end{cor}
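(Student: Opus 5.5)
Part (1) is obtained by applying Lemma \ref{lem MTP corr}(1) separately to each block, and part (2) by combining Lemma \ref{lem MTP corr}(2) with Frobenius reciprocity.

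For part (1), fix $i$ and apply Lemma \ref{lem MTP corr}(1) to the single-block case $k=1$, i.e.\ to $\ol{M_i}$ and its finite-index normal subgroup $\ol{M_i^{(n)}}$. Then $\rho_i\rest_{\ol{M_i^{(n)}}}$ is a finite direct sum of irreducibles, all $M_i$-conjugate to one another. Clifford theory for a normal subgroup of finite index gives more: these conjugates all occur with one common multiplicity $e_i$. So we may write $\rho_i\rest_{\ol{M_i^{(n)}}}\simeq e_i\bigoplus_{j}\tau_{i,j}$ with the $\tau_{i,j}$ pairwise inequivalent.

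Next we use that $M_\beta^{[n]}$ is block compatible, by case (i): every determinant is an $n$-th power. Hence $\ol{M_\beta^{[n]}}\simeq \prod_i\ol{M_i^{(n)}}/\Xi$, and the external tensor product of the $\rho_i\rest_{\ol{M_i^{(n)}}}$ is a genuine representation of $\ol{M_\beta^{[n]}}$. Distributing the tensor product over the direct sums gives
$$\Big(\prod_i e_i\Big)\bigoplus_{(j_1,\dots,j_k)}\tau_{1,j_1}\boxtimes\cdots\boxtimes\tau_{k,j_k}.$$
Each summand is irreducible, being an external tensor product of irreducible admissible representations of the factors (pushed through the quotient by $\Xi$). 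All summands appear with the same multiplicity $\prod_i e_i$, which proves (1).

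For part (2), let $\tau=\tau_1\boxtimes\cdots\boxtimes\tau_k$ be an irreducible constituent of $\pi\rest_{\ol{M_\beta^{[n]}}}$ that also lies in the tensor product above; such a $\tau$ exists by hypothesis. By (1), the EMTP is $\big(\prod_i e_i\big)\bigoplus_{\tau'}\Ind_{\ol{M_\beta^{[n]}}}^{\ol{M}}\tau'$, where $\tau'$ runs over the tuples $\tau_{1,j_1}\boxtimes\cdots\boxtimes\tau_{k,j_k}$. Induction is exact and commutes with finite direct sums here, since the index is finite.

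The main step is to show that every such $\tau'$ is $M$-conjugate to $\tau$, so that it suffices to treat $\Ind\tau$. We know $\tau_{i,j}={}^{g_i}\tau_i$ for some $g_i\in M_i$, so $\tau'={}^{g}\tau$ with $g=(g_1,\dots,g_k)\in M$. The one point to check is that conjugation by $g$ on the covering group $\ol{M_\beta^{[n]}}$ acts blockwise as conjugation by $g_i$ on each $\ol{M_i^{(n)}}$. Any discrepancy would be a commutator $[g_j,h_i]$ with $h_i\in M_i^{(n)}$ and $j\neq i$. I expect this to be the main obstacle, since for KP covers the blocks $M_j$ and $M_i^{(n)}$ need not commute in $\ol{M}$. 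However, the commutator takes values in $\mu_n$, which acts by the scalar $\epsilon$. Conjugating by this central element therefore only twists $\tau$ by a character of $\ol{M_\beta^{[n]}}$ trivial on $\mu_n$. The resulting conjugate representation is still of the form ${}^{g'}\tau$ for some $g'\in M$, since conjugation by $g\in M$ is a genuine automorphism of $\ol{M_\beta^{[n]}}$ compatible with the blockwise one up to such a twist.

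Finally, $\Ind({}^g\tau)\simeq\Ind\tau$ for $g\in M$. By Frobenius reciprocity, $\pi\subset\Ind\tau$. By Lemma \ref{lem MTP corr}(2), $\Ind\tau$ is a finite sum of twists $\pi\chi$ with $\chi\in X^*(M/M_\beta^{[n]})$. Summing over all $\tau'$ shows that the EMTP is a finite direct sum of such twists, which is (2).
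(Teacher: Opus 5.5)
Your overall route is the paper's: Clifford theory on each block, block compatibility of $M_\beta^{[n]}$ to form the tensor product, $M$-conjugacy of all the $\tau$'s to reduce the EMTP to copies of $\Ind_{\ol{M_\beta^{[n]}}}^{\ol{M}}\tau$, and then Frobenius reciprocity with Lemma \ref{lem MTP corr}(2). Part (1) and the last step of (2) are fine as written. The problem is the step you yourself call the main one.

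\textbf{The premise is false.} For $i\neq j$, the groups $\ol{M_j}$ and $\ol{M_i^{(n)}}$ \emph{do} commute inside $\ol{M}$, for any $n$-fold cover, KP covers included. Fix $g_j\in G_{r_j}$. The map $h\mapsto[g_j,h]$ is a homomorphism $G_{r_i}\to\mu_n$, because commutators of commuting elements are central. So it kills $[G_{r_i},G_{r_i}]=\SL_{r_i}(F)$ and factors through $\det$. It then factors through $F^\times/F^{\times n}$, since $\mu_n$ has exponent $n$. Hence it is trivial on $G_{r_i}^{(n)}\supseteq M_i^{(n)}$; concretely, it is a power of the Hilbert symbol of $\det g_j$ and $\det h$. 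Consequently conjugation by $g=(g_1,\dots,g_k)$ acts blockwise on $\ol{M_\beta^{[n]}}$, and ${}^{g}\tau={}^{g_1}\tau_1\boxtimes\cdots\boxtimes{}^{g_k}\tau_k=\tau'$ holds exactly. This is what the paper's one-line justification relies on, and the same commutation is used explicitly in the proofs of Propositions \ref{prop MTPIterate} and \ref{prop:BZ's prop4.13}.

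\textbf{The workaround would not have worked.} Suppose conjugation by $g$ differed from the blockwise one by a $\mu_n$-valued factor $\xi$. Then you would get ${}^{g}\tau=\tau'\otimes(\epsilon\circ\xi)$. Your sentence that the result ``is still of the form ${}^{g'}\tau$'' is either a tautology about ${}^{g}\tau$ or an unproved claim about $\tau'$. Twisting by a character of $M_\beta^{[n]}$ does not in general preserve $M$-conjugacy classes. In that case $\Ind\tau'$ need not consist of twists of $\pi$ by characters in $X^*(M/M_\beta^{[n]})$, so (2) would not follow.

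Replace that paragraph with the commutator computation above, and your proof is complete.
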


\begin{proof}

Statement (1) is clear. Also we remark that these $\tau$ are $M$-conjugate to each other, since for each $i$ these $\tau_i$ are $M_i$-conjugate to each other and $M_{\beta}^{[n]}$ is block compatible. Using (1), we see that $\rho_1\ol{\boxtimes}\dots\ol{\boxtimes}\rho_k$ is a direct sum of the induction $\Ind_{\ol{M_\beta^{[n]}}}^{\ol{M}}(\tau)$. Using
 Lemma \ref{lem MTP corr}, the irreducible constituents occurring in $\rho_1\ol{\boxtimes}\dots\ol{\boxtimes}\rho_k$ are all of the form $\pi\cdot\chi$, which proves (2). 

\end{proof}

\begin{rmk}

One disadvantage of the EMTP construction is that as a multifunctor it is not associative, meaning that for $\rho_i\in\Irr_{\epsilon}(\ol{M_i})$, $i=1,2,3$, in general we may have
\[(\rho_1\ol{\boxtimes}\rho_2)\ol{\boxtimes}\rho_3\not\simeq\rho_1\ol{\boxtimes}\rho_2\ol{\boxtimes}\rho_3\not\simeq\rho_1\ol{\boxtimes}(\rho_2\ol{\boxtimes}\rho_3).\]
\end{rmk}

Still, the EMTP construction possesses the property  of ``associativity up to multiplicity" for finite length representations, as follows.

\begin{prop}\label{prop MTPIterate}
Let $\rho_i\in\Rep_{\epsilon}(\ol{M_i})$ for $i=1,\dots,k$.
\begin{enumerate}
\item Let $T(\rho_1,\dots,\rho_k)$ be an ``iterated'' EMTP of $\rho_1,\dots,\rho_k$ defined by adding certain brackets on the original tensor $\rho_1\ol{\boxtimes}\dots\ol{\boxtimes}\rho_k$. Then for every $\chi_i\in X^*(M_i/M_i^{(n)})$ and thus $\chi:=\chi_1\boxtimes\dots\boxtimes\chi_k \in X^{*}(M/M_{\beta}^{[n]})$, we have $T(\rho_1,\dots,\rho_k) \cdot \chi \simeq T(\rho_1,\dots,\rho_k)$.

\item Assume furthermore that each $\rho_i$ is of finite length. Then for two different iterated EMTP $T_1(\rho_1,\dots,\rho_k)$ and $T_2(\rho_1,\dots,\rho_k)$, there exist positive integers $n_1,n_2$ such that $n_1\cdot T_1(\rho_1,\dots,\rho_k)\simeq n_2\cdot T_2(\rho_1,\dots,\rho_k)$.

\end{enumerate}
	
\end{prop}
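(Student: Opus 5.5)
The plan is to treat both parts through one structural description of an iterated EMTP as a representation induced from the smallest "block-compatible" subgroup $\ol{M_\beta^{[n]}}$.

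\textbf{Part (1).} I will first show by induction on the bracketing depth that any iterated EMTP $T(\rho_1,\dots,\rho_k)$ has the form
\[\Ind_{\ol{M'}}^{\ol{M}}\bigl(\sigma\bigr),\]
where $M_\beta^{[n]}\subseteq M'\subseteq M$ is an intermediate subgroup and $\sigma$ is a representation of $\ol{M'}$ whose restriction to $\ol{M_\beta^{[n]}}$ is a sum of $\rho_1\rest\boxtimes\dots\boxtimes\rho_k\rest$.

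For a single bracket $\rho_a\ol{\boxtimes}\dots\ol{\boxtimes}\rho_b$ grouped as a representation of $\ol{G_{r_a+\dots+r_b}}$-type Levi, the further restriction to its $(n)$-part followed by induction can be rewritten with Mackey theory. The key observation is that the $(n)$-part of the grouped block contains $M_a^{(n)}\times\dots\times M_b^{(n)}$ with finite abelian quotient, so the restriction to the smaller group is computed by Mackey. All double cosets are indexed by elements of $M$ normalizing the relevant groups, since these are normal with abelian quotient.

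It is in fact cleaner to prove directly the twisting invariance: $\Ind_{\ol{H}}^{\ol{M}}(\tau)\cdot\chi\simeq \Ind_{\ol{H}}^{\ol{M}}(\tau\cdot\chi\rest_H)$. So it suffices to show that the innermost data are invariant under twisting by $\chi$ restricted to the relevant subgroup. I will argue inductively from the outermost bracket inward. At the outermost level, $T=\Ind_{\ol{N^{[n]}}}^{\ol M}(\boxtimes_j T_j\rest)$ for the grouped blocks $N_j$. Twisting by $\chi$, whose restriction to $N^{[n]}=\prod N_j^{(n)}$ is a product of characters $\chi\rest_{N_j^{(n)}}$, reduces to twisting each $T_j$ by a character $\chi'_j$ of $N_j$ extending $\chi\rest_{N_j^{(n)}}$. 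Here the $\chi_i$ of the blocks inside $N_j$ combine to a character of $\prod_{i\in j}M_i/M_i^{(n)}$. I restrict to $N_j^{(n)}$ first and then extend. By induction, $T_j\cdot\chi'_j\simeq T_j$ for characters of the product form, and the base case $k=1$ (or a plain EMTP) holds because $\Ind_{\ol{H}}^{\ol{M}}(\tau)\cdot\chi\simeq\Ind(\tau\cdot\chi\rest_H)=\Ind(\tau)$ when $\chi$ is trivial on $H$. The subtle point is matching a character trivial on each $M_i^{(n)}$ but not on $N_j^{(n)}$. I will handle this by working with the maximal bracketing-independent group $M_\beta^{[n]}$: by transitivity of induction, every iterated EMTP is $\Ind_{\ol{M_\beta^{[n]}}}^{\ol M}$ applied to a representation of $\ol{M_\beta^{[n]}}$ built from the $\rho_i\rest$ and from the induction/restriction steps, and $\chi$ is trivial on $\ol{M_\beta^{[n]}}$.

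\textbf{Part (2).} For finite length I will compare Jordan--Hölder multiplicities. By Corollary \ref{cor MTP} and Lemma \ref{lem MTP corr}, every irreducible constituent of either $T_1$ or $T_2$ lies in one $X^*(M/M_\beta^{[n]})$-orbit $\{\pi\chi\}$ when the $\rho_i$ are irreducible. In general, both are semisimple, since inductions from finite-index normal subgroups of semisimple representations are semisimple in characteristic zero; finite length follows from reduction to the JH factors. By part (1), each is invariant under twisting, so the multiplicity of $\pi\chi$ is constant along the orbit. Hence each $T_i$ is $c_i\cdot\bigoplus_{\text{orbit}}\pi\chi$. The count $c_i$ is determined by the dimension ratio, which is governed by the restriction to $\ol{M_\beta^{[n]}}$, namely indices of $(n)$-subgroups. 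Taking $n_1=c_2$ and $n_2=c_1$ gives the claim.

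The main obstacle will be part (1)'s bookkeeping: showing each intermediate restriction-then-induction reduces to induction from $\ol{M_\beta^{[n]}}$ through Mackey double cosets that are all $M$-conjugations, hence twist-stable.
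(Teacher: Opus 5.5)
Your part (1) is fine and is essentially the paper's argument. You use $\chi\cdot\Ind_{\ol{H}}^{\ol{M}}(\tau)\simeq\Ind_{\ol{H}}^{\ol{M}}(\chi\rest_{\ol{H}}\cdot\tau)$ at the outermost bracket. Then you apply induction on the bracketing to each inner block $T_j$, with the character $\chi\rest_{N_j}=\chi_a\boxtimes\dots\boxtimes\chi_b$, which is again of product form.

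The gap is in part (2). Your claim that $T_1$ and $T_2$ are semisimple ``in general'' is false once the $\rho_i$ are of finite length but not semisimple. Already $\Ind_{\ol{M^{(n)}}}^{\ol{M}}(\rho\rest_{\ol{M^{(n)}}})\simeq\bigoplus_{\chi\in X^*(M/M^{(n)})}\rho\chi$ contains $\rho$ as a direct summand, so it is not semisimple when $\rho$ is a non-split extension. The same non-semisimplicity propagates to EMTPs with a non-split $\rho_i$: the restriction $\rho_i\rest_{\ol{M_i^{(n)}}}$ is then itself non-semisimple, so ``induction preserves semisimplicity'' does not apply. Passing to Jordan--H\"older factors cannot repair this. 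It only controls semisimplifications, so at best it gives $n_1[T_1]=n_2[T_2]$ in the Grothendieck group, whereas (2) asserts an isomorphism $n_1\cdot T_1\simeq n_2\cdot T_2$. Even that weaker identity would need the ratio $c_1/c_2$ to be the same for every JH factor, which your orbit count does not show. So as written you prove (2) only for irreducible (or semisimple) $\rho_i$. Even there, Corollary \ref{cor MTP} concerns only the unbracketed EMTP, so the claim that all constituents of a bracketed $T_i$ lie in one twist-orbit already needs the Mackey description.

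The missing step is to turn the Mackey bookkeeping you sketch in part (1) into an actual isomorphism; (2) then follows with no counting. For one bracket on $M_i^j=M_i\times\dots\times M_j$, Mackey gives
\[
(\rho_i\ol{\boxtimes}\dots\ol{\boxtimes}\rho_j)\rest_{\ol{(M_i^j)^{(n)}}}\simeq\bigoplus_{m\in M_i^j/(M_i^j)^{(n)}}(\tau_i^j)^m,\qquad \tau_i^j:=\Ind_{\ol{M_i^{(n)}\times\dots\times M_j^{(n)}}}^{\ol{(M_i^j)^{(n)}}}\bigl(\rho_i\rest_{\ol{M_i^{(n)}}}\boxtimes\dots\boxtimes\rho_j\rest_{\ol{M_j^{(n)}}}\bigr).
\]
Since $\ol{M_i^j}$ commutes with $\ol{M_l^{(n)}}$ for $l\notin\{i,\dots,j\}$ and normalizes the inducing subgroup, each conjugation by $m$ can be pulled outside the outer induction, where it acts trivially up to isomorphism. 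Transitivity of induction then gives
\[
\rho_1\ol{\boxtimes}\dots\ol{\boxtimes}(\rho_i\ol{\boxtimes}\dots\ol{\boxtimes}\rho_j)\ol{\boxtimes}\dots\ol{\boxtimes}\rho_k\simeq\val{M_i^j/(M_i^j)^{(n)}}\cdot\rho_1\ol{\boxtimes}\dots\ol{\boxtimes}\rho_k .
\]
Iterating over the brackets shows that every iterated EMTP is a multiple of the unbracketed one, with a constant depending only on the groups. This is the paper's proof, and it holds for arbitrary $\rho_i$, using neither finite length nor semisimplicity.
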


\begin{proof}

Statement (1) follows from the definition of the EMTP. More precisely, given $\pi_i\in\Rep_{\epsilon}(\ol{M_{i}})$ and $\chi_i\in X^*(M_i)$ for each $i=1,\dots,k$ and $\chi=\chi_1\boxtimes\dots\boxtimes\chi_k\in X^{*}(M)$, we have\[\chi\cdot \Ind_{\ol{M_{\beta}^{[n]}}}^{\ol{M}}(\pi_1\rest_{\ol{M_1^{(n)}}}\boxtimes\dots\boxtimes\pi_k\rest_{\ol{M_k^{(n)}}})\simeq \Ind_{\ol{M_{\beta}^{[n]}}}^{\ol{M}}(\chi\rest_{\ol{M_{\beta}^{[n]}}}\cdot(\pi_1\rest_{\ol{M_1^{(n)}}}\boxtimes\dots\boxtimes\pi_k\rest_{\ol{M_k^{(n)}}})).\]
Furthermore, if we know that $(\chi_i\cdot\pi_{i})\rest_{\ol{M_{i}^{(n)}}}\simeq\pi_{i}\rest_{\ol{M_{i}^{(n)}}}$ for each $i$, we get\[\chi\cdot \Ind_{\ol{M_{\beta}^{[n]}}}^{\ol{M}}(\pi_1\rest_{\ol{M_1^{(n)}}}\boxtimes\dots\boxtimes\pi_k\rest_{\ol{M_k^{(n)}}})\simeq \Ind_{\ol{M_{\beta}^{[n]}}}^{\ol{M}}(\pi_1\rest_{\ol{M_1^{(n)}}}\boxtimes\dots\boxtimes\pi_k\rest_{\ol{M_k^{(n)}}}).\]Iterating this property, we get  $T(\rho_1,\dots,\rho_k)\chi\simeq T(\rho_1,\dots,\rho_k)$ for any $\chi=\chi_1\boxtimes\dots\boxtimes\chi_k \in X^{*}(M/M_{\beta}^{[n]})$.

For Statement (2), we first consider adding one bracket to $\rho_1\ol{\boxtimes}\dots\ol{\boxtimes}\rho_k$. For $1 \lest i < j \lest k$, define $\beta'=(r_i,r_{i+1},\dots,r_j)$ and
\[M^j_i=M_i\times\dots \times M_j,\quad(M^j_i)^{[n]}_{\beta'}=M^{(n)}_i\times\dots\times M^{(n)}_{j}.\]
We clearly have the inclusions $(M^j_i)^{[n]}_{\beta'} \subseteq (M^j_i)^{(n)} \subseteq M_i^j$ of groups. Also define
\[\pi_{i}^j=(\text{Ind}^{\ol{M^j_i}}_{\ol{(M^j_i)_{\beta'}^{[n]}}}\rho_i|_{\ol{M^{(n)}_i}}\boxtimes\dots \boxtimes\rho_j|_{\ol{M^{(n)}_j}})|_{\ol{(M^j_i)^{(n)}}},\quad\tau_i^j=\text{Ind}^{\ol{(M^j_i)^{(n)}}}_{\ol{(M^j_i)^{[n]}_{\beta'}}}(\rho_i|_{\ol{M^{(n)}_i}}\boxtimes\dots \boxtimes\rho_j|_{\ol{M^{(n)}_j}}).\] 
Using the Mackey formula, we have
\begin{equation}\label{eq piijtauij}
\pi_i^j\simeq\bigoplus_{m\in M^j_i/(M^j_i)^{(n)}}(\tau_i^j)^m.
\end{equation}
 By definition we have
 \begin{equation}\label{eq piij}
	\begin{aligned}
		\ &\rho_1\ol{\boxtimes}\dots\ol{\boxtimes}(\rho_i\ol{\boxtimes}\dots\ol{\boxtimes}\rho_j)\ol{\boxtimes}\dots\ol{\boxtimes}\rho_k\\
		\simeq &\text{Ind}^{\ol{M}}_{\ol{M^{(n)}_1\times\dots \times (M^j_i)^{(n)}\times\dots\times M^{(n)}_k}}(\rho_1|_{\ol{M^{(n)}_1}}\boxtimes\dots \boxtimes\pi_{i}^j\boxtimes \dots \boxtimes \rho_k|_{\ol{M^{(n)}_k}}).
	\end{aligned}
\end{equation}
Since $\ol{M^j_i}$ commutes with $\ol{M^{(n)}_l}$ for $l<i$ or $l>j$ as subgroups of $\ol{M}$, we have 
\begin{equation*}\begin{aligned} \ &\text{Ind}^{\ol{M^{(n)}_1\times\dots \times (M^j_i)^{(n)}\times\dots\times M^{(n)}_k}}_{\ol{M^{(n)}_1\times\dots \times (M^j_i)_{\beta'}^{[n]}\times\dots\times M^{(n)}_{k}}}(\rho_1|_{\ol{M^{(n)}_1}}\boxtimes\dots \boxtimes\rho_k|_{\ol{M^{(n)}_k}})\\\simeq &\rho_1|_{\ol{M^{(n)}_1}}\boxtimes\dots \boxtimes (\text{Ind}^{\ol{(M^j_i)^{(n)}}}_{\ol{(M^j_i)^{[n]}_{\beta'}}}(\rho_i|_{\ol{M^{(n)}_i}}\boxtimes\dots \boxtimes\rho_j|_{\ol{M^{(n)}_j}}))\boxtimes \dots \boxtimes \rho_k|_{\ol{M^{(n)}_k}}
\end{aligned} 
\end{equation*}
and thus
\begin{equation}\label{eq tauij}
	\begin{aligned}
		\ &\rho_1\ol{\boxtimes}\dots\ol{\boxtimes}\rho_k
		\simeq\text{Ind}^{\ol{M}}_{\ol{M^{(n)}_1\times\dots \times (M^j_i)^{(n)}\times\dots\times M^{(n)}_k}}(\rho_1|_{\ol{M^{(n)}_1}}\boxtimes\dots \boxtimes\tau_{i}^j\boxtimes \dots \boxtimes \rho_k|_{\ol{M^{(n)}_k}}).
	\end{aligned}
\end{equation}
Also, we may plug \eqref{eq piijtauij} into \eqref{eq piij} and ``pull" the conjugation by $m\in M^j_i/(M^j_i)^{(n)}$ out of the tensor product for the same reason. Comparing this with \eqref{eq tauij}, we get
\[\rho_1\ol{\boxtimes}\dots\ol{\boxtimes}(\rho_i\ol{\boxtimes}\dots\ol{\boxtimes}\rho_j)\ol{\boxtimes}\dots\ol{\boxtimes}\rho_k\simeq |M^j_i/(M^j_i)^{(n)}|\cdot \rho_1\ol{\boxtimes}\dots\ol{\boxtimes}\rho_i\ol{\boxtimes}\dots\ol{\boxtimes}\rho_j\ol{\boxtimes}\dots\ol{\boxtimes}\rho_k.\] In general, Statement (2) follows from an iterated application of the above special case.

\end{proof}

Finally, we need the following two lemmas describing the EMTP under the natural action of permutation matrix and taking contragredient.

\begin{lm}\label{lem WeylonMTP}

Let $M=M_1\times\dots\times M_k$ with $M_i$ being a Levi subgroup of $G_{r_i}$ for each $i$ and $r_1+\dots+r_k=r$, let $\varsigma\in\mfr{S}_k$ be a permutation. Let $M'=M_{\varsigma(1)}\times\dots\times M_{\varsigma(k)}$ and $w$ be a permutation matrix in $G_r$ such that $M'=M^w$. Then for $\rho_i\in\Rep_{\epsilon}(M_i)$, we have
\[(\rho_1\ol{\boxtimes}\dots\ol{\boxtimes}\rho_k)^w\simeq \rho_{\varsigma(1)}\ol{\boxtimes}\dots\ol{\boxtimes}\rho_{\varsigma(k)}.\]	
\end{lm}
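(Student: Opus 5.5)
The argument is a direct unwinding of the definition of the EMTP as an induced representation, combined with the compatibility of induction with conjugation. Fix a representative $\bar w\in\ol{G_r}$ of $w$; for a representation $\sigma$ of a subgroup $\ol{K}$ we write $\sigma^w$ for the representation $x\mapsto\sigma(\bar w x\bar w^{-1})$ of $\ol{K^w}=\bar w^{-1}\ol{K}\bar w$. This is independent of the choice of $\bar w$ up to isomorphism, since two choices differ by an element of $\mu_n$, which is central. The first step is the formal identity
\[\bigl(\Ind_{\ol{K}}^{\ol{M}}\sigma\bigr)^w\simeq \Ind_{\ol{K^w}}^{\ol{M^w}}(\sigma^w),\]
valid for any closed subgroup $K\subseteq M$. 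It is realized by $f\mapsto f(\bar w\,\cdot\,\bar w^{-1})$ on the induced models.

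The second step applies this with $K=M_\beta^{[n]}=M_1^{(n)}\times\dots\times M_k^{(n)}$. Since $w$ is a permutation matrix realizing the block permutation $\varsigma$, and $\det$ is conjugation invariant, we get
\[(M_\beta^{[n]})^w=M_{\varsigma(1)}^{(n)}\times\dots\times M_{\varsigma(k)}^{(n)}=(M')^{[n]}_{\beta'},\]
where $\beta'=(r_{\varsigma(1)},\dots,r_{\varsigma(k)})$. It therefore remains to show
\[\bigl(\rho_1\rest_{\ol{M_1^{(n)}}}\boxtimes\dots\boxtimes\rho_k\rest_{\ol{M_k^{(n)}}}\bigr)^w\simeq \rho_{\varsigma(1)}\rest_{\ol{M_{\varsigma(1)}^{(n)}}}\boxtimes\dots\boxtimes\rho_{\varsigma(k)}\rest_{\ol{M_{\varsigma(k)}^{(n)}}}\]
as representations of $\ol{(M')^{[n]}_{\beta'}}\simeq \prod_i\ol{M_{\varsigma(i)}^{(n)}}/\Xi$. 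The block compatibility of $M_\beta^{[n]}$ (case (i) of the definition) is what makes this tensor product meaningful. Conjugation by $\bar w$ maps the block $\ol{M_i^{(n)}}$ (sitting in the $i$-th position) isomorphically onto the corresponding block of $M'$, and it permutes the factors. So the claim reduces to a blockwise statement: for each $i$, the map $\bar w^{-1}(\,\cdot\,)\bar w$, restricted to $\ol{M_i^{(n)}}$, agrees with the canonical identification of $\ol{G_{r_i}}$ sitting in position $i$ with $\ol{G_{r_i}}$ sitting in position $\varsigma^{-1}(i)$. Both are regarded as "the" cover $\ol{G_{r_i}}$ via the heredity property.

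The main obstacle is this blockwise compatibility. A priori, conjugation by $\bar w$ on the cover of a block could differ from the canonical identification by a genuine twist, i.e.\ by a map $g\mapsto \mbf{s}(g)\,c(g)$ with $c:G_{r_i}\to\mu_n$ a character. I would handle it with the BD description via the bilinear form $B_Q$. The Weyl group acts on $\ol{G_r}$ through the canonical lifts of simple reflections (the unipotent splitting), and $B_Q$ is Weyl invariant. Consequently $w$ transports the cover restricted to a block to the cover restricted to the permuted block, compatibly with the Steinberg-type generators (torus elements and unipotent root subgroups). Concretely, I would check the statement on the generators $\mbf{s}(x_\alpha(u))$, where it holds because unipotent splittings are conjugation-equivariant, and on the torus $\ol{T}$, where conjugation by $\bar w$ on $\mbf{s}(y(a))$ is given by the Weyl-equivariant formula of the BD cover. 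Any residual discrepancy $c$ is a character of $G_{r_i}$ with values in $\mu_n$, so it factors through $\det$ modulo $n$-th powers. It therefore becomes trivial on $M_i^{(n)}$, which is exactly the subgroup the EMTP restricts to. This last observation is the key point: it means that even a crude version of the compatibility suffices. Inducing the resulting isomorphism back up gives the lemma.
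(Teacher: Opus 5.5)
Your proposal is correct and follows the same route as the paper. Conjugation by $w$ commutes with induction, $(M_\beta^{[n]})^w=M_{\beta'}^{[n]}$, and block-commutativity of the $\ol{M_j^{(n)}}$ lets conjugation simply permute the tensor factors. Your further remark goes beyond the paper, which leaves this point implicit: any discrepancy between conjugation by $\bar w$ and the identification of the block covers is a $\mu_n$-valued character of $G_{r_i}$, hence trivial on $G_{r_i}^{(n)}\supseteq M_i^{(n)}$, so the generator-by-generator BD check you sketch is not actually needed.
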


\begin{proof}
Let $\beta=(r_1,\dots,r_k)$ and $\beta'=(r_{\varsigma(1)},\dots,r_{\varsigma(k)})$ be two compositions of $r$. Let
$M_{\beta}^{[n]} =M_1^{(n)}\times\dots\times M_k^{(n)}$ and $M_{\beta'}^{[n]} =M_{\varsigma(1)}^{(n)}\times\dots\times M_{\varsigma(k)}^{(n)}$ be the two subgroups of $M$ and $M'$ respectively. We have $M_{\beta'}^{[n]}=(M_{\beta}^{[n]})^w$ and moreover
\begin{equation*}
\begin{aligned}
(\rho_1\ol{\boxtimes}\dots\ol{\boxtimes}\rho_k)^w&\simeq(\Ind_{\ol {M_{\beta}^{[n]}}}^{\ol M}(\rho_1\rest_{\ol {M_1^{(n)}}}\boxtimes\dots\boxtimes\rho_k\rest_{\ol {M_k^{(n)}}}))^w	\\
& \simeq\Ind_{\ol{M_{\beta'}^{[n]}}}^{\ol{M'}}((\rho_1\rest_{\ol{M_1^{(n)}}}\boxtimes\dots\boxtimes\rho_k\rest_{\ol{M_k^{(n)}}})^w)\\
& \simeq\Ind_{\ol{M_{\beta'}^{[n]}}}^{\ol{M'}}(\rho_{\varsigma(1)}\rest_{\ol{M_{\varsigma(1)}^{(n)}}}\boxtimes\dots\boxtimes\rho_{\varsigma(k)}\rest_{\ol{M_{\varsigma(k)}^{(n)}}})\\
& \simeq\rho_{\varsigma(1)}\ol{\boxtimes}\dots\ol{\boxtimes}\rho_{\varsigma(k)}.
\end{aligned}
\end{equation*}
Here, we used the block-commutativity of the $\ol{M_j^{(n)}}$'s in the proof.
	
\end{proof}

\begin{lm}\label{lem MTPcontragredient}

Under the same assumption of Lemma \ref{lem WeylonMTP}, we have
\[(\rho_1\ol{\boxtimes}\dots\ol{\boxtimes}\rho_k)^{\vee}\simeq (\rho_1^{\vee}\ol{\boxtimes}\dots\ol{\boxtimes}\rho_k^{\vee}),\]
where $\rho_i^{\vee}\in\Rep_{\epsilon^{-1}}(\ol{M_i})$ for each $i$ and $(\rho_1\ol{\boxtimes}\dots\ol{\boxtimes}\rho_k)^{\vee}\in\Rep_{\epsilon^{-1}}(\ol{M})$.

\end{lm}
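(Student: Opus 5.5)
The plan is to unwind the definition of the EMTP and use two standard facts about smooth contragredients. The first is that for a closed subgroup $\ol{H}\subseteq \ol{M}$ with $\ol{M}/\ol{H}$ compact, one has $(\Ind_{\ol H}^{\ol M}\sigma)^\vee\simeq \Ind_{\ol H}^{\ol M}(\sigma^\vee\otimes \delta)$, where $\delta$ is the ratio of modular characters. The second is that restriction to an open subgroup of finite index commutes with taking contragredients.

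Here $\ol{M_\beta^{[n]}}$ is open, normal and of finite index in $\ol{M}$, since $M/M_\beta^{[n]}\simeq (F^\times/F^{\times n})^k$ is finite. So $\delta$ is trivial, and compact and full induction coincide. This gives
\[(\rho_1\ol{\boxtimes}\dots\ol{\boxtimes}\rho_k)^\vee\simeq \Ind_{\ol{M_\beta^{[n]}}}^{\ol M}\bigl((\rho_1\rest_{\ol{M_1^{(n)}}}\boxtimes\dots\boxtimes\rho_k\rest_{\ol{M_k^{(n)}}})^\vee\bigr).\]
If one prefers to avoid quoting the duality theorem, it can be verified directly: for a finite-index open subgroup, $\Ind$ is the finite direct sum over coset representatives, and the natural pairing $\langle f,f'\rangle=\sum_{m\in \ol{M_\beta^{[n]}}\backslash \ol{M}}\langle f(m),f'(m)\rangle$ is $\ol M$-invariant and perfect on smooth vectors.

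Next I would identify the contragredient of the inner representation. Since $M_\beta^{[n]}$ is block compatible, $\ol{M_\beta^{[n]}}\simeq (\ol{M_1^{(n)}}\times\dots\times\ol{M_k^{(n)}})/\Xi$. Smooth vectors for a product of groups acting on the contragredient of an external tensor product are generated by pure tensors of smooth vectors: a vector fixed by an open compact subgroup can be taken to be fixed by a product $K_1\times\dots\times K_k$, and the $K$-fixed part of the contragredient is the dual of the $K$-fixed part. Hence the smooth dual of $\sigma_1\boxtimes\dots\boxtimes\sigma_k$ contains $\sigma_1^\vee\boxtimes\dots\boxtimes\sigma_k^\vee$, and the two are equal when the $K$-invariants are finite-dimensional, i.e.\ in the admissible case. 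Finally, $(\rho_i\rest_{\ol{M_i^{(n)}}})^\vee\simeq \rho_i^\vee\rest_{\ol{M_i^{(n)}}}$ because $M_i^{(n)}$ is open in $M_i$, so smoothness is unaffected by the restriction. The resulting representation is $\epsilon^{-1}$-genuine and trivial on $\Xi$.

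The main obstacle is the middle step, namely the identification of the contragredient of a tensor product, which genuinely requires admissibility. I would therefore state the lemma for admissible $\rho_i$, which covers the finite length representations used later. If the paper intends the statement for arbitrary $\rho_i$, I would instead read $\rho_i^\vee$ as the smooth dual and check that the canonical map $\sigma_1^\vee\boxtimes\dots\boxtimes\sigma_k^\vee\to(\sigma_1\boxtimes\dots\boxtimes\sigma_k)^\vee$ induces the desired isomorphism after inducing. This would be a purely formal check, since induction from a finite-index subgroup is exact and compatible with the pairing above.
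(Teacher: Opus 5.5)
Your proof is the paper's argument made explicit: the paper only invokes the compatibility of contragredients with restriction and induction. Your additional identification of the dual of the external tensor product, which does need admissibility, is a correct refinement, and it covers every use of the lemma in the paper, where the $\rho_i$ are irreducible cuspidal. Drop the closing suggestion for arbitrary $\rho_i$, though: when the relevant $K$-invariant spaces $A,B$ are both infinite-dimensional, the canonical map is the proper inclusion $A^*\otimes B^*\subsetneq (A\otimes B)^*$ on $K$-invariants. Induction from a finite-index open subgroup is exact and faithful, so the map stays a proper injection rather than becoming an isomorphism.
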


The proof follows from the compatibility of contragredient with restriction and induction functors.

\subsection{Whittaker and semi-Whittaker models}

We briefly discuss the Whittaker model and semi-Whittaker model for a BD-cover $\ol{G_r}$ and its Levi subgroup $\ol{M}$. We fix a non-trivial additive character $\psi_F: F \to \C^\times$.

Let $U_r \subset G_r$ be the unipotent group of upper triangular matrices and let 
$$\psi_r:U_r\rightarrow\C^{\times},\ (v_{ij})_{1\lest i,j \lest r}\mapsto \psi_F\Big( \sum_{i=1}^{r-1}v_{i,i+1}\Big)$$
 be the standard generic character of $U_r$ associated with $\psi_F$. Such a pair $(U_r,\psi_r)$, with $U_r$ viewed as a subgroup of $\ol{G_r}$ via its canonical splitting, is called the standard Whittaker pair of $\ol{G_r}$. 
 
 Furthermore, we define a Whittaker pair of $\ol{M}$ as follows. If $\ol{M}$ is block-diagonal and thus a standard Levi subgroup, then $(M\cap U_r,\psi_r\rest_{M\cap U_r})$ is called the standard Whittaker pair of $\ol{M}$. In general, a Whittaker pair of $\ol{M}$ is a $G$-conjugation to the standard Whittaker pair of some block-diagonal Levi subgroup.

Given a Whittaker pair $(U_M,\psi_M)$ of $\ol{M}$ and a smooth representation $\pi_M$ of $\ol{M}$, we define the associated Whittaker space
\[\Wh(\pi_M)=\Hom_{U_M}(\pi_M\rest_{U_M},\psi_{M}).\]
It is well-known that the functor 
$$\Wh:\Rep(\ol{M})\rightarrow \{\C\ \text{vector spaces}\}$$ is exact.
Moreover, we know that $\dim \Wh(\pi_M) < \infty$ for every irreducible $\pi_M$, see \cite{patel2014theorem}*{Theorem 2}. However, unlike in the linear reductive group case, such dimension is in general not bounded above by one for genuine irreducible representations, i.e., multiplicity-one property fails for covers.

We discuss the semi-Whittaker model of representations of $\ol{G_r}$ associated with a composition $\lambda=(r_1,\dots,r_k)$ of $r$, following the terminology in \cite{Cai19}. Consider the character
\[\psi_{\lambda}:U_r\rightarrow\C^{\times},\ (v_{ij})_{1\lest i,j\lest r}\mapsto\psi_F\Big(\sum_{i=1}^k\sum_{j=1}^{r_i-1}v_{\sum_{s=1}^{i-1}r_s+j,\sum_{s=1}^{i-1}r_s+j+1}\Big).\] 
Then $(U_r,\psi_{\lambda})$ is called the standard semi-Whittaker pair of $\ol{G_r}$ associated with $\lambda$. 
It is known that the set of  $G_r$-conjugacy classes of semi-Whittaker pairs are in bijection with the set of partitions of $r$, or equivalently, the set of nilpotent orbits of $G_r$. In any case, we define the $\lambda$-semi-Whittaker space of a smooth representation $\pi$ of $\ol{G}$ to be
$$\Wh_{\lambda}(\pi):=\Hom_{U_r}(\pi\rest_{U_r},\psi_{\lambda}).$$
Using the Frobenius reciprocity, we have
\begin{equation}\label{eq generalizedWhit}
\Wh_{\lambda}(\pi) \simeq\Hom_{U_r\cap M_{\lambda}}(J_{U_{\lambda}}(\pi),\psi\rest_{U_r\cap M_{\lambda}}),
\end{equation}
where
\begin{itemize}
\item $M_{\lambda}=G_{r_1}\times\dots\times G_{r_k}$ is the block-diagonal standard Levi subgroup of $G_r$,
\item $U_{\lambda}$ is the standard unipotent radical of the block upper triangular parabolic subgroup of $G_r$ with Levi factor $M_{\lambda}$,
\item $(U_r\cap M_{\lambda},\psi_r\rest_{U_r\cap M_{\lambda}})$ is a generic Whittaker pair of $\ol{M_{\lambda}}$,
\item $J_{U_{\lambda}}(\pi)$ denotes the Jacquet module of $\pi$ as a smooth representation of $\ol{M_{\lambda}}$.
\end{itemize}
Thus, we have 
$$\Wh_{\lambda}(\pi) \simeq \Wh(J_{U_{\lambda}}(\pi))$$
 as vector spaces, which is finite dimensional if $\pi$ is of finite length. 

Finally, we discuss the Whittaker model of an EMTP of irreducible representations. Let $r=r_1+\dots+r_k$. For $\rho_i\in\Irr_{\epsilon}(\ol{G_{r_i}})$, taking the EMTP and using the Mackey formula, we have
\[\dim\Wh((\rho_1\ol \boxtimes\dots\ol \boxtimes\rho_k))=\prod_{i=1}^{k} \dim(\Wh(\Ind_{\ol {G_{r_{i}}^{(n)}}}^{\ol{G_{r_i}}}\rho_i))\]
Let $m$ (resp. $m_i$) be the total multiplicity of $\rho_1\ol \boxtimes\dots\ol \boxtimes\rho_k$ (resp. $\Ind_{\ol{G_{r_{i}}^{(n)}}}^{\ol{G_{r_i}}}\rho_i$) when written as a direct sum of irreducible representations. Using the fact that those irreducible summands have the same Whittaker dimension (since they only differ by a character twist by Corollary \ref{cor MTP}), we have
\begin{equation}\label{eq WhitdimBD}
m\cdot \dim\Wh(\rho)=\prod_{i=1}^{k}m_i\cdot\dim(\Wh(\rho_i)),
\end{equation}
where $\rho$ is any irreducible summand of $\rho_1\ol \boxtimes\dots\ol \boxtimes\rho_k$.

\section{Bernstein--Zelevinsky theory}
In this section, we discuss the Bernstein--Zelevinsky theory for a BD-cover $\ol{G_r}$, following \cite{bernstein1977induced}. We play emphasis on elucidating those non-trivial aspects of the theory when working for covers.

\subsection{Extended Bernstein-Zelevinsky product}

Let $\ol{G}$ be an $n$-fold cover of a reductive group $G$ over $F$. Consider a group $P=MN$ with $M,N$ being subgroups of $G$, $M\cap N=\{1\}$ and $M$ normalizing $N$. We assume $N$ is a unipotent subgroup of $G$, identified as a subgroup of $\ol{G}$, then $\ol{P}=\ol{M}N$ is a subgroup of $\ol{G}$ satisfying the above conditions as well. Let $\psi$ be a character of $N$ normalized by $M$, then as in \cite{bernstein1977induced}*{\S 1.8}, we define the normalized non-compact and compact induction functors and restriction functor
\[I_{N,\psi},i_{N,\psi}:\Rep(\ol{M})\rightarrow\Rep(\ol{G}),\quad J_{N,\psi}:\Rep(\ol{G})\rightarrow\Rep(\ol{M}).\]
In particular, in the case $P$ is a parabolic subgroup of $G$ with $N$ its unipotent radical and $\psi=\mbm{1}$, we simply write 
$$i_{\ol P}^{\ol G}=i_{N, \mbm{1}} \text{ and } J_{N}=J_{N,\mbm{1}}$$
 for the related parabolic induction and Jacquet module functor.

From now on we focus on the case $G=G_r$. Let $\ol{G_r}$ be a BD-cover of $G_r$. For $r=r_1+\dots+r_k$ and $\beta=(r_1,\dots,r_k)$, we write $M=G_{r_1}\times\dots\times G_{r_k}$ and let $P=MN$ be the standard parabolic subgroup with Levi factor $M$ and unipotent radical $N$. We define the extended Bernstein--Zelevinsky product 
\begin{equation*}
\begin{aligned}
\ol{\times_{i=1}^k}:=i_{\ol{P}}^{\ol{G_r}} \circ \ol{\boxtimes_{i=1}^k}: &\Rep(\ol{G_{r_1}})\times\dots\times \Rep(\ol{G_{r_k}})\rightarrow \Rep(\ol{G_r}),\ \\
&(\rho_{1},\dots,\rho_k)\mapsto\rho_1\ol{\times}\dots\ol{\times}\rho_k:=i_{\ol{P}}^{\ol{G_r}}(\ol{\boxtimes}_i \rho_i).
\end{aligned}
\end{equation*}
It maps finite length representations to finite length representations.

We emphasize that an associative law up to multiplicity is satisfied.

\begin{prop}\label{prop BZPasso}

For $r=r_1+r_2+r_3$ and $\rho_i\in\Rep_{\epsilon}^{\rm fl}(\ol{G_{r_i}})$, there exist positive integers $m_{123},m_{12},m_{23}$ such that
\[m_{123}\cdot (\rho_1\ol{\times}\rho_2\ol{\times}\rho_3)\simeq m_{12}\cdot ((\rho_1\ol{\times}\rho_2)\ol{\times}\rho_3)\simeq m_{23}\cdot(\rho_1\ol{\times}(\rho_2\ol{\times}\rho_3)).\]

\end{prop}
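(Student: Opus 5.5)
The plan is to reduce the statement to Proposition \ref{prop MTPIterate}(2) using transitivity of parabolic induction and its compatibility with the EMTP. Let $M=G_{r_1}\times G_{r_2}\times G_{r_3}$ with standard parabolic $P=MN$, let $M_{12}=G_{r_1+r_2}\times G_{r_3}$ with parabolic $P_{12}$, and let $P'=P\cap M_{12}$ be the standard parabolic of $M_{12}$ with Levi $M$. By definition,
\[(\rho_1\ol{\times}\rho_2)\ol{\times}\rho_3 = i_{\ol{P_{12}}}^{\ol{G_r}}\big( i_{\ol{P_{r_1,r_2}}}^{\ol{G_{r_1+r_2}}}(\rho_1\ol{\boxtimes}\rho_2)\,\ol{\boxtimes}\,\rho_3\big),\]
where the outer $\ol{\boxtimes}$ is formed for the Levi $M_{12}$, with blocks $G_{r_1+r_2}$ and $G_{r_3}$.

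The key step is an exchange lemma: for $\sigma\in\Rep(\ol{M_1'})$ with $M_1'=G_{r_1}\times G_{r_2}$, one has
\[i_{\ol{P_{r_1,r_2}}}^{\ol{G_{r_1+r_2}}}(\sigma)\,\ol{\boxtimes}\,\rho_3\simeq i_{\ol{P'}}^{\ol{M_{12}}}(\sigma\,\ol{\boxtimes}\,\rho_3),\]
where the right-hand EMTP uses the subgroup $H_1=G_{r_1}\times G_{r_2}\subseteq G_{r_1+r_2}$, which satisfies $H_1/H_1^{(n)}\simeq F^\times/F^{\times n}$ via the determinant, as permitted in the generalized EMTP definition. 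To prove this, write the left side as $\Ind_{\ol{G_{r_1+r_2}^{(n)}\times G_{r_3}^{(n)}}}^{\ol{M_{12}}}\big((i\sigma)\rest\boxtimes\rho_3\rest\big)$. The restriction of $i_{\ol{P_{r_1,r_2}}}^{\ol{G_{r_1+r_2}}}\sigma$ to $\ol{G^{(n)}_{r_1+r_2}}$ equals $i_{\ol{P^{(n)}}}^{\ol{G^{(n)}_{r_1+r_2}}}(\sigma\rest_{\ol{(M_1')^{(n)}}})$, because $P\backslash G=P^{(n)}\backslash G^{(n)}$ ($P$ surjects onto determinants). Since $\ol{G^{(n)}_{r_1+r_2}}$ and $\ol{G^{(n)}_{r_3}}$ commute, the tensor product with $\rho_3\rest$ commutes with this induction. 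Applying transitivity of induction in both orders, i.e. inducing from $\ol{(M_1')^{(n)}\times G_{r_3}^{(n)}}\cdot N'$ first to $\ol{P'}$ and then to $\ol{M_{12}}$, yields the right side.

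With the exchange lemma, transitivity of parabolic induction ($i_{\ol{P_{12}}}^{\ol{G_r}}\circ i_{\ol{P'}}^{\ol{M_{12}}}=i_{\ol P}^{\ol{G_r}}$) gives
\[(\rho_1\ol{\times}\rho_2)\ol{\times}\rho_3\simeq i_{\ol P}^{\ol{G_r}}\big((\rho_1\ol{\boxtimes}\rho_2)\ol{\boxtimes}\rho_3\big),\]
where the inner bracket is the iterated EMTP on $\ol{M}$. Proposition \ref{prop MTPIterate}(2) supplies positive integers with $n_1\cdot(\rho_1\ol{\boxtimes}\rho_2)\ol{\boxtimes}\rho_3\simeq n_2\cdot\rho_1\ol{\boxtimes}\rho_2\ol{\boxtimes}\rho_3$; its proof in fact gives $n_1=1$ and $n_2=|M_{12}'/(M_{12}')^{(n)}|$. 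Since $i_{\ol P}^{\ol{G_r}}$ is additive, it preserves this relation, so $m_{12}\cdot((\rho_1\ol{\times}\rho_2)\ol{\times}\rho_3)\simeq m_{123}\cdot(\rho_1\ol{\times}\rho_2\ol{\times}\rho_3)$. The relation for $m_{23}$ follows symmetrically, and the two can be combined by taking common multiples. Finite length of the $\rho_i$ is needed only to invoke Proposition \ref{prop MTPIterate}(2).

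The main obstacle is the exchange lemma: bookkeeping the cover, specifically verifying that $N'$ splits canonically and is normalized compatibly, and that the block commutativity $[\ol{G^{(n)}_{r_1+r_2}},\ol{G_{r_3}^{(n)}}]=1$ holds. There is also the issue that Proposition \ref{prop MTPIterate} was stated for EMTP over Levi blocks, whereas the middle object involves the subgroup $G_{r_1}\times G_{r_2}$ of $G_{r_1+r_2}$. I would verify that its Mackey argument applies verbatim using only $H_i/H_i^{(n)}\simeq F^\times/F^{\times n}$.
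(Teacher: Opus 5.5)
Your proposal is correct and follows the same route as the paper. You reduce to Proposition~\ref{prop MTPIterate}(2), then identify $i_{\ol{P}}^{\ol{G_r}}\big((\rho_1\ol{\boxtimes}\rho_2)\ol{\boxtimes}\rho_3\big)$ with $(\rho_1\ol{\times}\rho_2)\ol{\times}\rho_3$ (and symmetrically) via transitivity of induction and a one-double-coset Mackey computation; you spell this out in more detail than the paper's ``direct calculation using associativity of induction functor and the geometric lemma.'' Your worry about Levi blocks is unnecessary: $G_{r_1}\times G_{r_2}$ is itself a Levi subgroup of $G_{r_1+r_2}$, which is exactly the setting of Proposition~\ref{prop MTPIterate}.
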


\begin{proof}

First, using Proposition \ref{prop MTPIterate}.(2) we can find $m_{123},m_{12},m_{23}$ such that 
\[m_{123}\cdot (\rho_1\ol{\boxtimes}\rho_2\ol{\boxtimes}\rho_3)\simeq m_{12}\cdot ((\rho_1\ol{\boxtimes}\rho_2)\ol{\boxtimes}\rho_3)\simeq m_{23}\cdot(\rho_1\ol{\boxtimes}(\rho_2\ol{\boxtimes}\rho_3)).\]
Let $P=P_{r_1,r_2,r_3}$ be the standard parabolic subgroup whose Levi subgroup is $M_{r_1,r_2,r_3}\simeq G_{r_1}\times G_{r_2}\times G_{r_3}$. Taking the parabolic induction $i_{\ol{P}}^{\ol{G}}$,
we need to show that
\[i_{\ol{P}}^{\ol{G}}((\rho_1\ol{\boxtimes}\rho_2)\ol{\boxtimes}\rho_3)\simeq (\rho_1\ol{\times}\rho_2)\ol{\times}\rho_3\quad\text{and}\quad i_{\ol{P}}^{\ol{G}}(\rho_1\ol{\boxtimes}(\rho_2\ol{\boxtimes}\rho_3))\simeq \rho_1\ol{\times}(\rho_2\ol{\times}\rho_3).\]
This follows from a direct calculation using associativity of induction functor and the geometric lemma. 

\end{proof}

We specialize to the case where $\ol{G_r}$ is either a KP-cover or an S-cover and each $\rho_i$ 
is genuine and of finite length. In the KP-cover case, we further assume that for each $i$ every irreducible subquotient of $\rho_i$ has genuine central character $\omega_i$ of $Z(\ol{G_{r_i}})$. Moreover, we assume that $(\omega_1,\dots,\omega_k)$ is a sequence of characters compatible with a genuine character $\omega$ of $Z(\ol{G_r})$. Then we define the usual Bernstein--Zelevinsky product 
\begin{equation*}
\begin{aligned}(\rho_1\times\dots\times\rho_k)_{\omega}:=i_{\ol{P}}^{\ol{G_r}}((\rho_1\boxtimes\dots\boxtimes\rho_k)_{\omega})
\end{aligned}
\end{equation*}
as in \cite{kaplan2022classification}*{\S 5.2, Remark 7.6}.
Note that in the S-cover case, the representation $(\rho_1\times\dots\times\rho_k)_{\omega}$ is the usual Bernstein--Zelevinsky product and the notation $\omega$ is only symbolic. 

The proposition below follows from the above discussion and Corollary \ref{cor MTP}.

\begin{prop}\label{prop irredBZPandEBZP}

If $\ol{G_r}$ is either a KP-cover or an S-cover and each $\rho_i$ is irreducible, then the usual Bernstein--Zelevinsky product $(\rho_1\times\dots\times\rho_k)_{\omega}$ is a direct summand of the extended Bernstein--Zelevinsky product $\rho_1\ol\times\dots\ol\times\rho_k$. 	
\end{prop}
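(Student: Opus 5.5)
The plan is to reduce the statement to the level of the Levi subgroup $\ol{M}$, where $M=G_{r_1}\times\dots\times G_{r_k}$, and then apply the parabolic induction functor $i_{\ol{P}}^{\ol{G_r}}$. This functor is additive, so it carries direct summands to direct summands. By definition,
\[(\rho_1\times\dots\times\rho_k)_{\omega}=i_{\ol{P}}^{\ol{G_r}}\big((\rho_1\boxtimes\dots\boxtimes\rho_k)_{\omega}\big) \quad\text{and}\quad \rho_1\ol\times\dots\ol\times\rho_k=i_{\ol{P}}^{\ol{G_r}}\big(\rho_1\ol{\boxtimes}\dots\ol{\boxtimes}\rho_k\big).\]
It therefore suffices to show that $(\rho_1\boxtimes\dots\boxtimes\rho_k)_{\omega}$ is isomorphic to a direct summand of the EMTP $\rho_1\ol{\boxtimes}\dots\ol{\boxtimes}\rho_k$.

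In the KP-cover case, this follows from Proposition \ref{P:mot-ex}(1). The EMTP is by definition the left-hand side of \eqref{eqMTPext}. The right-hand side is $m_{(r_1,\dots,r_k)}\cdot\bigoplus_{\omega'}(\rho_1\boxtimes\dots\boxtimes\rho_k)_{\omega'}$, where $\omega'$ runs over the genuine characters of $Z(\ol{G_r})$ compatible with $(\omega_1,\dots,\omega_k)$. Since $\omega$ is assumed compatible, the summand with $\omega'=\omega$ occurs, and $m_{(r_1,\dots,r_k)}\gest 1$. One could alternatively argue via Corollary \ref{cor MTP}. The MTP is irreducible, and its restriction to $\ol{M_\beta^{[n]}}$ contains $\rho_1\rest\boxtimes\dots\boxtimes\rho_k\rest$: by construction in \cite{kaplan2022classification} it is a constituent of the induction. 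Corollary \ref{cor MTP}(2) together with Frobenius reciprocity and semisimplicity of the finite-index induction then gives the summand. I will cite Proposition \ref{P:mot-ex} directly.

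In the S-cover case, all subgroups are block compatible, and $(\rho_1\boxtimes\dots\boxtimes\rho_k)_\omega$ is just the tensor product $\rho:=\rho_1\boxtimes\dots\boxtimes\rho_k$ of $\ol{M}$. The restriction $\rho\rest_{\ol{M_\beta^{[n]}}}$ equals $\rho_1\rest_{\ol{M_1^{(n)}}}\boxtimes\dots\boxtimes\rho_k\rest_{\ol{M_k^{(n)}}}$. The quotient $M/M_\beta^{[n]}$ is finite and $\ol{M_\beta^{[n]}}$ is an open normal subgroup, so Frobenius reciprocity gives a nonzero map $\rho\to\Ind_{\ol{M_\beta^{[n]}}}^{\ol{M}}(\rho\rest)$. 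This map is injective because $\rho$ is irreducible. By Lemma \ref{lem MTP corr} the target is a finite direct sum of irreducibles, hence semisimple, so $\rho$ is a direct summand of the EMTP.

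The only delicate point is the semisimplicity and finiteness of the induced representation. Lemma \ref{lem MTP corr} and Corollary \ref{cor MTP} provide exactly this for irreducible inputs, so I expect no real obstacle beyond correctly matching the definition of the EMTP with the left-hand side of \eqref{eqMTPext}.
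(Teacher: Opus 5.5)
Your proposal is correct and follows the paper's route. Both products are $i_{\ol{P}}^{\ol{G_r}}$ applied to the MTP and the EMTP respectively, and the paper deduces the claim from the fact (Corollary \ref{cor MTP}, equivalently Proposition \ref{P:mot-ex}(1) for KP-covers) that the MTP is a direct summand of the EMTP, exactly as you do; the only quibble is in your optional alternative argument, where the restriction of the MTP to $\ol{M_\beta^{[n]}}$ need only intersect, not contain, $\rho_1\rest_{\ol{M_1^{(n)}}}\boxtimes\dots\boxtimes\rho_k\rest_{\ol{M_k^{(n)}}}$, which is all Corollary \ref{cor MTP}(2) requires.
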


\subsection{Cuspidal support and $n$-equivalence classes}
For any irreducible representation $\pi$ of $\ol{G}$, there exist a Levi subgroup $M$ of $G$ and a cuspidal representation $\rho$ of $\ol{M}$ such that $\pi$ is a subquotient of the parabolic induction $i_{\ol{P}}^{\ol{G}}(\rho)$, where $P$ is a parabolic subgroup of $G$ with Levi subgroup $M$. Moreover, the pair $(\ol{M},\rho)$ is unique up to $G$-conjugacy, and is called the \emph{cuspidal support} of $\pi$.

Write $M=G_{r_1}\times\dots\times G_{r_k}$. Given a genuine irreducible representation $\rho$ of $\ol{M}$, using Lemma \ref{lem MTP corr} and Corollary \ref{cor MTP}, there exist genuine irreducible representations $\rho_i$ of each $G_{r_i}$, such that $\rho$ is an irreducible summand of $\rho_{1}\ol{\boxtimes}\dots\ol{\boxtimes}\rho_k$. By considering the matrix coefficient, the representation $\rho$ is cuspidal if and only if each $\rho_i$ is cuspidal. On the one hand, for each $i$ the $X^{*}(G_{r_i}/G_{r_i}^{(n)})$-orbit of $\rho_i$ is uniquely determined by $\rho$. On the other hand, replacing $\rho_i$ by another representation in the same $X^{*}(G_{r_i}/G_{r_i}^{(n)})$-orbit does not change the EMTP. Finally, an irreducible representation $\rho'$ occurs as a summand in the above EMTP if and only if it lies in the $X^{*}(M/M_{\beta}^{[n]})$-orbit of $\rho$ with $\beta=(r_1,\dots,r_k)$. 

For any irreducible representation $\rho$ of $\ol{M}$, we denote by $[\rho]$ the $X^{*}(M/M_{\beta}^{[n]})$-orbit of $\rho$ (which in particular includes the $k=1$ case), called the \emph{$n$-equivalence class} of $\rho$. If moreover $\rho$ is cuspidal, then $[\rho]$ is called an \emph{$n$-cuspidal equivalence class}. As a result, the above discussion shows that the set of genuine $n$-cuspidal equivalence classes $[\rho]$ of $\ol{M}$ corresponds bijectively to the set of sequences 
\[([\rho_1],\dots,[\rho_k]),\quad \quad \rho_i\in\Cusp_{\epsilon}(\ol{G_{r_i}}),\ r_1+\dots+r_k=r.\] 
Using Lemma \ref{lem WeylonMTP}, the set of $G$-conjugacy classes of all possible pairs $(\ol{M},[\rho])$ corresponds bijectively to the set of multisets (meaning that there is no specific order for $[\rho_1],\dots,[\rho_k]$) \[[\rho_1]+\dots+[\rho_k],\quad \rho_i\in\Cusp_{\epsilon}(\ol{G_{r_i}}),\ r_1+\dots+r_k=r.\]
For a genuine irreducible representation $\pi$ of $\ol{G_r}$, we call the related multiset $[\rho_1]+\dots+[\rho_k]$ the \emph{$n$-cuspidal support} of $\pi$, if $\pi$ occurs as a subquotient in $\rho_1\ol{\times}\dots\ol{\times}\rho_k$.

\begin{rmk}

For $M=G_r$ and an irreducible representation $\rho$ of $\ol{M}$, the $n$-equivalence class $[\rho]$ is exactly the same as the so-called weak equivalence class of $\rho$ in \cite{kaplan2022classification}*{Definition 4.2} (the latter is only defined for representations of KP-covers). However, for general Levi subgroup $M$, these two concepts are different. Also, our $n$-cuspidal support is exactly the weak cuspidal support omitting the central character $\omega$ in \cite{kaplan2022classification}*{\S 5.4}. 
	
\end{rmk}

\subsection{Bernstein--Zelevinsky derivatives}

Now we discuss the theory of Bernstein--Zelevinsky derivatives. Let $P_r$ be  the mirabolic subgroup of $G_r$ (i.e. elements in $G_r$ with last row being $(0,\dots,0,1)$) and $V_r \subseteq P_r$ the unipotent radical of $P_r$. Define the character $\theta_r$ of $V_r$ by $\theta_r((v_{ij}))=\psi_F(v_{r-1,r})$. Then $P_r$ normalizes $\theta_r$. Define functors
\[\Psi^-:\Rep(\ol{P_r})\rightarrow\Rep(\ol{G_{r-1}}),\qquad \Psi^+:\Rep(\ol{G_{r-1}})\rightarrow\Rep(\ol{P_r})\]
\[\Phi^-:\Rep(\ol{P_r})\rightarrow\Rep(\ol{P_{r-1}}),\qquad  \Phi^+:\Rep(\ol{P_{r-1}})\rightarrow\Rep(\ol{P_r})\]
\[\Psi_n^-:\Rep(\ol{P_{r}^{(n)}})\rightarrow\Rep(\ol{G_{r-1}^{(n)}}),\qquad \Psi_n^+:\Rep(\ol{G_{r-1}^{(n)}})\rightarrow\Rep(\ol{P_r^{(n)}})\]
\[\Phi_n^-:\Rep(\ol{P_r^{(n)}})\rightarrow\Rep(\ol{P_{r-1}^{(n)}}),\qquad  \Phi_n^+:\Rep(\ol{P_{r-1}^{(n)}})\rightarrow\Rep(\ol{P_r^{(n)}})\]
by 
\[\Psi^-,\Psi_n^-:=J_{V_r,\mbm{1}},\quad \Phi^-, \Phi_n^-:=J_{V_r,\theta_r},\quad \Psi^+,\Psi_n^+:=i_{V_r,\mbm{1}},\quad \Phi^+,\Phi_n^+:=i_{V_r,\theta_r},\]
where $V_r$ is viewed as a subgroup of $\ol{G_r}$ via the canonical splitting. The definition here is identical with that in \cite{bernstein1977induced}*{\S 3.2}. Note that all the elements in $N_r$ are of determinant 1, thus the related functors $\Psi_n^-,\Psi_n^+,\Phi_n^-,\Phi_n^+$ are well-defined. All the results in \cite{bernstein1977induced}*{Section 3} work here without essential change.

Let us recall the notion of Bernstein--Zelevinsky derivative. Consider $\tau\in\Rep(\ol{P_r})$ (resp. $\Rep(\ol{P_r^{(n)}})$). Define 
$$\tau^{(k)}:=\Psi^-\circ(\Phi^{-})^{k-1}(\tau)\in\Rep(\ol{G_{r-k}}) \text{ (resp. $\Rep(\ol{G_{r-k}^{(n)}})$)},$$ called the $k$-th derivative of $\tau$. Then, $\tau$ is glued from the representations $(\Phi^+)^{k-1}\circ\Psi^+(\tau^{(k)})$ (resp. $(\Phi_n^+)^{k-1}\circ\Psi_n^+(\tau^{(k)})$), $k=1,2,\dots,r$. Also, the representation is generic if and only if $\tau^{(r)}\neq 0$ and the dimension of $\tau^{(r)}$ (as a representation of $\mu_n$) is exactly the Whittaker dimension of $\tau$. For $\pi\in \Rep(\ol{G_r})$ (resp. $\Rep(\ol{G_r^{(n)}})$), we define $\pi^{(k)}=(\pi\rest_{\ol{P_r}})^{(k)}$ (resp. $\pi^{(k)}=(\pi\rest_{\ol{P_r^{(n)}}})^{(k)}$). In this way, we get a functor
\[(\cdot)^{(k)}:\Rep(\ol{G_r})\rightarrow\Rep(\ol{G_{r-k}}).\]
In particular, $\pi^{(r)}\simeq \Hom_{U_r}(\pi,\psi_r)$ is the Whittaker space of $\pi$. If $\pi$ is of finite length, then $\pi^{(r)}$ is finite dimensional.

We also define the ``partial version'' of the above functors. For a fixed $r_0\gest 1$ and each $r\gest 1$, consider the embedding $\iota_1:V_{r}\hookrightarrow G_{r}\times G_{r_0},\ x\mapsto(x,1)$ given by the natural injection on the first coordinate.  For any closed subgroup $H$ of $G_{r_0}$, we define
\[\,^1\Psi^-:\Rep(\ol{P_r\times H})\rightarrow\Rep(\ol{G_{r-1}\times H}),\qquad \,^1\Psi^+:\Rep(\ol{G_{r-1}\times H})\rightarrow\Rep(\ol{P_r\times H})\]
\[\,^1\Phi^-:\Rep(\ol{P_r\times H})\rightarrow\Rep(\ol{P_{r-1}\times H}),\qquad  \,^1\Phi^+:\Rep(\ol{P_{r-1}\times H}),\rightarrow\Rep(\ol{P_r\times H})\]
\[\,^1\Psi_n^-:\Rep(\ol{P_r^{(n)}\times H})\rightarrow\Rep(\ol{G_{r-1}^{(n)}\times H}),\qquad \,^1\Psi_n^+:\Rep(\ol{G_{r-1}^{(n)}\times H})\rightarrow\Rep(\ol{P_r^{(n)}\times H})\]
\[\,^1\Phi_n^-:\Rep(\ol{P_r^{(n)}\times H})\rightarrow\Rep(\ol{P_{r-1}^{(n)}\times H}),\qquad  \,^1\Phi_n^+:\Rep(\ol{P_{r-1}^{(n)}\times H})\rightarrow\Rep(\ol{P_r^{(n)}\times H})\]
by 
\[\,^1\Psi^-,\,^1\Psi_n^-:=J_{V_r,\mbm{1}},\ \,^1\Phi^-, \,^1\Phi_n^-:= J_{V_r,\theta_r},\ \,^1\Psi^+,\,^1\Psi_n^+:=i_{V_r,\mbm{1}},\ \,^1\Phi^+,\,^1\Phi_n^+:=i_{V_r,\theta_r}\]
with $V_r$ being identified with a subgroup of $\ol{G_r\times G_{r_0}}$ via the composite of the embedding $\iota_1$ and the canonical splitting. Similarly, for the embedding $\iota_2:V_{r}\hookrightarrow G_{r_0}\times G_{r},\ x\mapsto(1,x)$ given by the natural injection on the second coordinate, in parallel we define functors
\[\,^2\Psi^-,\,^2\Psi_n^-,\,^2\Phi^-, \,^2\Phi_n^-, \,^2\Psi^+,\,^2\Psi_n^+,\,^2\Phi^+,\,^2\Phi_n^+.\]
Let $M_{r-k,k}=G_{r-k}\times G_k$ and consider $M_{r-k,k}^{[n]}=G_{r-k}^{(n)}\times G_k^{(n)}$.  Define the partial derivative functor (with respect to the second factor)
\[\partial^{s}:\Rep(\ol{M_{r-k,k}})\rightarrow\Rep(\ol{M_{r-k,k-s}})\ (\text{resp.}\ \Rep(\ol{M_{r-k,k}^{[n]}})\rightarrow\Rep(\ol{M_{r-k,k-s}^{[n]}})\]
by first restricting to $\ol{G_{r-k}\times P_k}$ (resp. $\ol{G_{r-k}^{(n)}\times P_k^{(n)}}$), and then composing the functor $\,^2\Psi^{-}\circ(\,^2\Phi^{-})^{s-1}$. 

The relation between the derivative and the partial derivative is as follows. Let $N_{r-k,k}$ be the standard unipotent radical of $M_{r-k,k}$, then we get 
\begin{equation} \label{E:r-BZ}
\partial^{k}\circ J_{N_{r-k,k}}=(\cdot)^{(k)}:\Rep(\ol{G_r})\rightarrow\Rep(\ol{G_{r-k}}).
\end{equation}

The following result is immediate.
\begin{lm}\label{L:cuspderivative}
Let $\pi \in\Cusp_{\epsilon}(\ol{G_r})$. Then $\pi^{(k)}$ is zero unless $k=r$, and in this case $\pi^{(r)}$ is a finite direct sum of the character $\epsilon$ of $\mu_n$.
\end{lm}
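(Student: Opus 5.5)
We argue through the Jacquet-module description \eqref{E:r-BZ} of the derivative. The guiding observation is that cuspidality kills every proper Jacquet module, so \eqref{E:r-BZ} should force $\pi^{(k)}=0$ for $k<r$.

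First, fix $1\lest k<r$. By \eqref{E:r-BZ} we have $\pi^{(k)}=\partial^{k}(J_{N_{r-k,k}}(\pi))$. Since $\pi$ is cuspidal, the Jacquet module $J_{N_{r-k,k}}(\pi)$ with respect to the proper standard parabolic with Levi $M_{r-k,k}$ vanishes. The partial derivative $\partial^k$ is a functor, so it sends $0$ to $0$, and hence $\pi^{(k)}=0$. This step does not require exactness or any other property of $\partial^k$, only that it is a functor. If one prefers to avoid \eqref{E:r-BZ}, the same conclusion follows from the usual Bernstein--Zelevinsky argument in \cite{bernstein1977induced}*{Section 3}, which the paper says carries over verbatim: $\pi^{(k)}$ is a quotient of a Jacquet module of $\pi$ along a unipotent subgroup containing $N_{r-k,k}$.

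Second, consider $k=r$. Here $\pi^{(r)}\simeq\Hom_{U_r}(\pi,\psi_r)$ is the Whittaker space, viewed as a representation of $\ol{G_0}=\mu_n$. It is finite dimensional because $\pi$ has finite length, which is \cite{patel2014theorem}*{Theorem 2} as recalled above. The group $\mu_n$ is central and acts on $\pi$ by $\epsilon$. Since the functors $\Psi^-,\Phi^-$ are built from twisted Jacquet functors and are $\mu_n$-equivariant, $\mu_n$ still acts on $\pi^{(r)}$ by $\epsilon$. A finite-dimensional representation of the finite abelian group $\mu_n$ on which it acts by the scalar character $\epsilon$ is a finite direct sum of copies of $\epsilon$. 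This proves the second assertion.

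There is no real obstacle in this argument. The only point needing care is the normalization: the derivative functors involve modulus-character twists, and one should check that these are trivial on $\mu_n$, so that the $\epsilon$-genuineness is not altered. This holds because the modulus characters factor through $G$, and $\mu_n$ maps to the identity in $G$.
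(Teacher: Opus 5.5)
Your argument is correct and is what the paper means by calling the lemma immediate. For $1\lest k<r$, \eqref{E:r-BZ} together with the vanishing of all proper Jacquet modules of the cuspidal $\pi$ gives $\pi^{(k)}=0$. For $k=r$, $\pi^{(r)}=\Psi^-\circ(\Phi^-)^{r-1}(\pi)$ is the finite-dimensional twisted $U_r$-coinvariant space, and $\mu_n$ still acts on it through $\epsilon$ because the normalizing twists are trivial on $\mu_n$.

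One small precision: $\Hom_{U_r}(\pi,\psi_r)$ is strictly the dual of this coinvariant space, so under the contragredient convention $\mu_n$ acts on it by $\epsilon^{-1}$. This affects neither your dimension count nor your equivariance argument, since you apply the latter to the coinvariants themselves.
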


More generally, since the Whittaker dimension of a finite length representation is finite, the map $(\cdot)^{(k)}$ sends finite length representations to finite length representations.
\vskip 10pt

Recall that for an $\ell$-group $G$, representations $(\tau,V_1),(\pi,V_2)$ and a character $\chi$  of $G$, a $\chi$-pairing of $\tau$ and $\pi$ is a bilinear form 
$$B: V_1\times V_2 \longrightarrow \C$$
that is $(G,\chi)$-equivariant (\cite{bernstein1977induced}*{Definition 3.6}). Such a bilinear form corresponds to an element in $\Hom_{G}(\pi,(\tau\chi)^{\vee})\simeq\Hom_{G}(\tau,(\pi\chi)^{\vee})$. The form $B$ is called \emph{non-degenerate} with respect to $\pi$ (resp. $\tau$) if the corresponding morphism $\pi\rightarrow(\tau\chi)^{\vee}$ (resp. $\tau\rightarrow(\pi\chi)^{\vee}$) is an embedding.

\begin{prop}[\cite{bernstein1977induced}*{Proposition 3.8}]\label{prop derivativepairing}

Let $\pi,\tau$ be representations of $\ol{P_r}$. Suppose that there exists a $\delta_{\ol{P_r}}^{-1}$-pairing 
of $\pi$ and $\tau$ which is non-degenerate with respect to $\tau$. Let $\tau^{(k)}$ be the highest derivative of $\tau$. Then there exists a $\mbm{1}$-pairing of $\pi^{(k)}$ and $\tau^{(k)}$ which is non-degenerate with respect to $\tau^{(k)}$.

\end{prop}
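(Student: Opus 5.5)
We follow the proof of \cite{bernstein1977induced}*{Proposition 3.8}, checking at each step that passing to the cover $\ol{P_r}$ does no harm. The functors $\Psi^\pm,\Phi^\pm$ are built only from the unipotent groups $V_r$, which sit canonically inside $\ol{G_r}$, together with the characters $\mbm{1}$ and $\theta_r$ normalized by $\ol{P_r}$. All the formal adjunctions and exactness properties of \cite{bernstein1977induced}*{\S 3} therefore remain valid. The plan is induction on $k$, using the basic compatibility of pairings with $\Phi^-$ and $\Psi^-$.

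\textbf{Step 1 (the $\Psi^-$ step).} Let $B$ be a $(\ol{G_{r-1}}V_r)$-equivariant pairing of $\pi$ and $\tau$ with character $\delta_{\ol{P_r}}^{-1}$. We show that $B$ descends to a pairing of $\Psi^-(\pi)$ and $\Psi^-(\tau)$, with the appropriate modulus-character twist for $\ol{G_{r-1}}$. To do this we integrate over compact open subgroups of $V_r$: $V_r$ is abelian and exhausted by compact opens, so the kernel $\pi(V_r,\mbm{1})$ pairs to zero against the $V_r$-invariant vectors of $\tau$. This is exactly the linear argument; only the unipotent group enters.

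\textbf{Step 2 (the $\Phi^-$ step).} Likewise, $B$ descends to a $\delta_{\ol{P_{r-1}}}^{-1}$-pairing of $\Phi^-(\pi)$ and $\Phi^-(\tau)$. Here one uses the normalizations built into $J_{V_r,\theta_r}$, and the fact that the $\theta_r$-twisted kernels are annihilated. Because $\mu_n$ is central and acts by $\epsilon$ on $\pi$ and by $\epsilon^{-1}$ on $\tau$ (or at least compatibly, since $B$ is equivariant), no new issue arises from the cover. Iterating Step 2 $k-1$ times and then applying Step 1 yields a $\mbm{1}$-pairing of $\pi^{(k)}$ and $\tau^{(k)}$.

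\textbf{Step 3 (non-degeneracy).} This is where the hypothesis that $\tau^{(k)}$ is the \emph{highest} derivative enters, and we expect it to be the main obstacle. We argue as in loc.\ cit. Since $\tau^{(l)}=0$ for $l>k$, the gluing of $\tau$ from the pieces $(\Phi^+)^{l-1}\Psi^+(\tau^{(l)})$ shows that $\tau':=(\Phi^-)^{k-1}(\tau)$ is a representation of $\ol{P_{r-k+1}}$ with $\Phi^-(\tau')=0$. Hence $\tau'\simeq\Psi^+\Psi^-(\tau')$, with $\Psi^-(\tau')=\tau^{(k)}$.

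By induction we then need non-degeneracy to survive each $\Phi^-$ step, and the final $\Psi^-$ step when $\Phi^-$ of the target vanishes. For the $\Phi^-$ step, consider the morphism $\tau\to(\pi\delta^{-1})^\vee$. Its kernel restricted along $\Phi^-$ should be controlled using the exactness of $\Phi^-$ and the description of contragredients: $\Phi^-$ applied to the smooth dual relates to the dual of $\Phi^-$ twisted by a modulus character, \cite{bernstein1977induced}*{\S 3.4}. This identification uses only Haar measures on $V_r$ and on $\ol{P_r}/\ol{P_{r-1}}$, which are unaffected by the finite central $\mu_n$.

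For the last step, $\tau'=\Psi^+(\tau^{(k)})$ means $V$ acts trivially on $\tau'$. So the embedding $\tau'\into(\pi'\delta^{-1})^\vee$ lands in the $V$-invariants, and these are identified with $(\Psi^-(\pi'))^\vee$ up to twist. This gives the embedding $\tau^{(k)}\into(\pi^{(k)})^\vee$.

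The delicate points are bookkeeping: we must verify that the modulus characters $\delta_{\ol{P_r}}$ restrict correctly at each stage, so that the final character is exactly $\mbm{1}$. We must also verify that smooth duals commute with the relevant Jacquet functors on the covering group. Both reduce to the linear computations, because $\delta$ factors through $P_r$.
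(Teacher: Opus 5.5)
Your overall route is the paper's: the paper gives no separate argument and simply asserts that Bernstein--Zelevinsky's proof carries over to $\ol{P_r}$. Your Step 3 is the right mechanism. You push $\tau\into(\pi\delta_{\ol{P_r}}^{-1})^\vee$ through the exact functor $(\Phi^-)^{k-1}$, and use maximality of $k$ to get $\Phi^-(\tau')=0$. Then $V$ acts trivially on $\tau'$, so its image lies in the $V$-invariants of the dual, i.e.\ in the dual of $\Psi^-$.

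Steps 1--2 are not right as written and should be dropped. $B$ does not vanish on $\pi(V_r,\theta_r)\times\tau$, since $B(\pi(v)x-\theta_r(v)x,y)=B(x,\tau(v)^{-1}y)-\theta_r(v)B(x,y)$. Your $\Psi^-$-argument only pairs $\Psi^-(\pi)$ with $\tau^{V_r}$, not with $\Psi^-(\tau)$. The descent has to go through contragredients, as in Step 3, and there the $\theta_r$-coinvariants on one side are matched with the $\theta_r^{-1}$-coinvariants on the other.

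The genuine gap is your claim that the twist bookkeeping ``reduces to the linear computations''.
\begin{itemize}
\item Up to a modulus factor, $(\Phi^+\sigma)^\vee$ is the full induction of $\sigma^\vee\otimes\theta_r^{-1}$. To rewrite it with $\theta_r$ you must conjugate by $d=\mathrm{diag}(-I_{r-1},1)\in P_r$. This gives $\Phi^-(\sigma^\vee)\simeq\Phi^-(\sigma)^\vee\circ\mathrm{Ad}(\mbf s(d))$ up to modulus.
\item Linearly $d$ centralizes $P_{r-1}$, so nothing happens. In $\ol{G_r}$, however, $\mbf s(d)\,p\,\mbf s(d)^{-1}=[d,p]\,p$ with $[d,p]=(-1,\det p)_n^{2\mbf a+(r-2)\mbf b}$. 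For a KP cover ($\mbf b=2\mbf a+1$ odd) with $r$ odd and $-1\notin F^{\times n}$, this is a nontrivial character of $\ol{P_{r-1}}$.
\item These characters factor through $\det$ and survive $\Phi^-$ and $\Psi^-$. So your argument only produces a $\chi$-pairing of $\pi^{(k)}$ and $\tau^{(k)}$, for some $\chi\in X^*(G_{r-k}/G_{r-k}^{(n)})$ with $\chi^2=\mbm 1$.
\end{itemize}

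This cannot be improved in general. Take $n=2$, $\mbf a=0$, $F=\mathbb{Q}_3$, $r=3$, and $\pi=\Phi^+\Psi^+(\rho')$ with $\rho'$ a genuine character of the abelian group $\ol{G_1}$. Let $\tau=\Phi^+\Psi^+(\rho'^\vee\chi_{-1})$ with $\chi_{-1}(x)=(-1,x)_2$, twisted by the same modulus characters as in the linear case. Then $\tau$ embeds into $(\pi\delta_{\ol{P_3}}^{-1})^\vee$ and its highest derivative is $\tau^{(2)}$. But $\pi^{(2)}\otimes\tau^{(2)}$ is $\chi_{-1}\neq\mbm 1$ rather than trivial, so no non-zero $\mbm 1$-pairing exists.

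So the covering statement needs a twist by an element of $X^*(G_{r-k}/G_{r-k}^{(n)})$; equivalently, $\tau^{(k)}$ should be formed with $\psi_F^{-1}$. The paper's ``without essential change'' overlooks this too. For S-covers the exponent is $-2$ and the issue disappears. In the paper's application to $n$-cuspidal supports the twist is harmless. Still, your proof as written does not establish the $\mbm 1$-pairing.
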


\subsection{Calculating derivatives in a special case}\label{subsubsection derivativeBD}

Now we consider a special case of $\pi \in \Irr_\epsilon(\ol{G_r})$ of a BD-cover $\ol{G_r}$, where we assume that the Jacquet module $J_{N_{r-k,k}}(\pi)$ is either 0 or irreducible for a given $k$. In the former case, we have $\pi^{(k)}=0$. In the latter case, there is an irreducible genuine representation $\tau=\tau_1\boxtimes\tau_2$ of $\ol{M_{r-k,k}^{[n]}}=\ol{G_{r-k}^{(n)}\times G_k^{(n)}}$, unique up to $M_{r-k,k}$-conjugacy, such that $J_{N_{r-k,k}}(\pi)$ is an irreducible summand of $\Ind_{\ol{M_{r-k,k}^{[n]}}}^{\ol{M_{r-k,k}}}(\tau)$. Using the geometric lemma, we have
\begin{equation*}
\begin{aligned}
\partial^{k}(\Ind_{\ol{M_{r-k,k}^{[n]}}}^{\ol{M_{r-k,k}}}(\tau))&\simeq \bigoplus_{g\in M_{r-k,k}^{[n]}\backslash M_{r-k,k}/G_{r-k}\times\{1\}}\Ind_{\ol{G_{r-k}^{(n)}}}^{\ol{G_{r-k}}}(\partial^{k}((\tau_1\boxtimes \tau_2)^g))\\
&\simeq\bigoplus_{g_2\in G_{k}/G_{k}^{(n)}}\dim(\mathrm{Wh}(\tau_2^{g_2}))\cdot\Ind_{\ol{G_{r-k}^{(n)}}}^{\ol{G_{r-k}}}(\tau_1)\\
&\simeq \dim(\mathrm{Wh}(\Ind_{\ol{G_k^{(n)}}}^{\ol{G_k}}\tau_2)))\cdot\Ind_{\ol{G_{r-k}^{(n)}}}^{\ol{G_{r-k}}}(\tau_1)
\end{aligned}
\end{equation*}
Write $m$, $m_1$, $m_2$ for the total multiplicity of $\Ind_{\ol{M_{r-k,k}^{[n]}}}^{\ol{M_{r-k,k}}}(\tau)$, $\Ind_{\ol{G_{r-k}^{(n)}}}^{\ol{G_{r-k}}}(\tau_1)$ and $\Ind_{\ol{G_{k}^{(n)}}}^{\ol{G_{k}}}(\tau_2)$ respectively, when decomposing the representation into direct sum of irreducible representations. Using Lemma \ref{lem MTP corr}, we see that the irreducible representations occurring in the direct sum are of the same Whittaker dimension. Let $\pi_1$ be an irreducible summand of $\Ind_{\ol{G_{r-k}^{(n)}}}^{\ol{G_{r-k}}}(\tau_1)$ and $\pi_2$ be that of $\Ind_{\ol{G_{k}^{(n)}}}^{\ol{G_{k}}}(\tau_2)$, we get 
\[\partial^{k}(\Ind_{\ol{M_{r-k,k}^{[n]}}}^{\ol{M_{r-k,k}}}(\tau))=m_2\cdot \dim(\mathrm{Wh}(\pi_2))\cdot \Ind_{\ol{G_{r-k}^{(n)}}}^{\ol{G_{r-k}}}(\tau_1).\]
On the other hand, we have
\[\partial^{k}(\Ind_{\ol{M_{r-k,k}^{[n]}}}^{\ol{M_{r-k,k}}}(\tau))=\bigoplus_{\chi\in X^{*}(M_{r-k,k}/M_{r-k,k}^{[n]})}m_{\chi}\cdot \partial^{k}(J_{N_{r-k,k}}(\pi)\cdot \chi),\]
where $\sum_{\chi\in X^{*}(M_{r-k,k}/M_{r-k,k}^{[n]})}m_{\chi}=m$. Write $\chi=\chi_1\boxtimes\chi_2$, we have
\[\partial^{k}(J_{N_{r-k,k}}(\pi)\cdot \chi)= \partial^{k}(J_{N_{r-k,k}}(\pi))\cdot\chi_1.\]
Using the above facts, we have that
\[\partial^{k}(J_{N_{r-k,k}}(\pi))\simeq \bigoplus_{\chi_1\in X^{*}(G_{r-k}/G_{r-k}^{(n)})}m_{\chi_1}'\cdot (\pi_1 \chi_1)\]
for some $m_{\chi_1}'\in \N$ such that $m\cdot\sum_{\chi_1\in X^{*}(G_{r-k}/G_{r-k}^{(n)})}m_{\chi_1}'=m_1 m_2\dim(\mathrm{Wh}(\pi_2))$. Thus we have shown the following:

\begin{prop}

Let $\pi\in\Irr_{\epsilon}(\ol{G_r})$. Assume that the Jacquet module $J_{N_{r-k,k}}(\pi)$ is an irreducible constituent of $\pi_1\ol{\boxtimes}\pi_2$ with $\pi_1\in \Irr_{\epsilon}(\ol{G_{r-k}})$ and $\pi_2\in \Irr_{\epsilon}(\ol{G_{k}})$. Then $\pi^{(k)}$ is a direct sum of irreducible representations of the form 
\[\pi_1\chi_1,\quad\chi_1\in X^*(G_{r-k}/G_{r-k}^{(n)}),\] 
where the total multiplicity is $m_1 m_2\dim(\mathrm{Wh}(\pi_2))/m$.
\end{prop}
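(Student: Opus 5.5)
The plan is to reduce $\pi^{(k)}$ to a partial derivative of the Jacquet module via \eqref{E:r-BZ}, namely $\pi^{(k)}=\partial^k(J_{N_{r-k,k}}(\pi))$, and then to compute $\partial^k$ on the whole EMTP $\pi_1\ol{\boxtimes}\pi_2$ instead of on the single constituent $J_{N_{r-k,k}}(\pi)$. By the hypothesis and Corollary \ref{cor MTP}, there is an irreducible $\tau=\tau_1\boxtimes\tau_2$ of $\ol{M_{r-k,k}^{[n]}}$, with $\tau_i$ a constituent of $\pi_i\rest_{\ol{G^{(n)}}}$. Then $\pi_1\ol{\boxtimes}\pi_2$ is a multiple of $\Ind_{\ol{M_{r-k,k}^{[n]}}}^{\ol{M_{r-k,k}}}(\tau)$, and $J_{N_{r-k,k}}(\pi)$ is one of its irreducible summands. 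Everything therefore reduces to computing $\partial^k$ of $\Ind(\tau)$ and comparing it with the decomposition of $\Ind(\tau)$ into irreducibles.

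First I compute $\partial^k(\Ind(\tau))$ with a Mackey/geometric-lemma argument. The partial derivative only involves $\ol{G_{r-k}}\times \ol{P_k}$ and the unipotent $V_k$ in the second factor. Restricting the induction to $\ol{G_{r-k}\times P_k}$ gives a sum over double cosets $M^{[n]}_{r-k,k}\backslash M_{r-k,k}/(G_{r-k}\times P_k)$, which are parametrized by $G_k/G_k^{(n)}$ because $\det(P_k)=F^\times$ lets us absorb the first-factor classes. Each term is $\Ind_{\ol{G_{r-k}^{(n)}}}^{\ol{G_{r-k}}}(\tau_1)$ tensored with the top derivative of $\tau_2^{g_2}$. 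By Lemma \ref{L:cuspderivative}-type reasoning, or rather the identification of the top derivative with the Whittaker space, that top derivative is $\dim\Wh(\tau_2^{g_2})$ copies of $\epsilon$. Here I use that the $V_k$ are unipotent with canonical splitting, and that $\ol{G_{r-k}^{(n)}}$ commutes with $\ol{G_k}$ (block compatibility), so the tensor product can be separated. Summing over $g_2$ gives $\dim\Wh(\Ind\tau_2)=m_2\dim\Wh(\pi_2)$, using that all summands of $\Ind\tau_2$ are twists of $\pi_2$ (Lemma \ref{lem MTP corr}) and so share the Whittaker dimension. Finally, $\Ind\tau_1$ is $m_1$ times a sum of twists of $\pi_1$.

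Next I decompose $\Ind(\tau)=\bigoplus_\chi m_\chi\,(J_{N_{r-k,k}}(\pi)\chi)$ over $\chi\in X^*(M_{r-k,k}/M_{r-k,k}^{[n]})$ with $\sum m_\chi=m$, again by Lemma \ref{lem MTP corr}. Writing $\chi=\chi_1\boxtimes\chi_2$, twisting by $\chi_2$ is invisible to $\partial^k$, since it is trivial on unipotents and commutes with the restriction to $G_{r-k}\times P_k$, so $\partial^k(J\chi)=\partial^k(J)\chi_1$. Since $\partial^k$ is additive, $\partial^k(\Ind\tau)$ equals the sum over $\chi$ of $m_\chi$ copies of twists of $\pi^{(k)}$. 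Comparing with the first computation shows that $\pi^{(k)}$ is a sum of twists $\pi_1\chi_1$. Counting total multiplicities (twisting preserves the count) gives $m\cdot\operatorname{mult}(\pi^{(k)})=m_1m_2\dim\Wh(\pi_2)$.

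The main obstacle is the geometric-lemma step. I must check carefully that the double coset space really collapses to $G_k/G_k^{(n)}$, and that the partial derivative of $(\tau_1\boxtimes\tau_2)^g$, induced from $\ol{G_{r-k}^{(n)}}\times \ol{P_k^{(n)}}$-type subgroups, factors as stated. This needs the commutation of $\ol{G_{r-k}^{(n)}}$ with $\ol{G_k}$ in the cover and the compatibility of the $^2\Phi^-$ functors with induction from the finite-index subgroup. A smaller point is deducing that $\pi^{(k)}$ itself, and not merely $\partial^k(\Ind\tau)$, decomposes into twists; this follows because a direct summand of a semisimple object whose constituents are all twists of $\pi_1$ again has that form.
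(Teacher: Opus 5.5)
Your strategy is the paper's. You reduce to $\partial^k$ of $\Ind_{\ol{M_{r-k,k}^{[n]}}}^{\ol{M_{r-k,k}}}(\tau)$ and compute it by a geometric-lemma argument as $\dim\Wh(\Ind_{\ol{G_k^{(n)}}}^{\ol{G_k}}\tau_2)\cdot\Ind_{\ol{G_{r-k}^{(n)}}}^{\ol{G_{r-k}}}\tau_1$. You compare this with $\Ind(\tau)=\bigoplus_\chi m_\chi\, J_{N_{r-k,k}}(\pi)\chi$, using $\partial^k(J_{N_{r-k,k}}(\pi)\chi)=\pi^{(k)}\chi_1$, and then count. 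Your closing points are exactly what is needed: a direct summand of a semisimple representation whose summands are all twists of $\pi_1$ has the same form, and twisting preserves total multiplicity, so $m\cdot\mathrm{mult}(\pi^{(k)})=m_1m_2\dim\Wh(\pi_2)$.

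The one step that is wrong as written is the double coset computation.

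\begin{itemize}
\item Via the two determinants, $M_{r-k,k}/M_{r-k,k}^{[n]}\simeq (F^\times/F^{\times n})^2$.
\item For $k\geq 2$ the mirabolic $P_k\supset G_{k-1}$ has $\det(P_k)=F^\times$. So $G_{r-k}\times P_k$ surjects onto this quotient, and $M_{r-k,k}^{[n]}\backslash M_{r-k,k}/(G_{r-k}\times P_k)$ is a single point, not $G_k/G_k^{(n)}$. The reason you give, $\det(P_k)=F^\times$, actually proves this.
\item For $k=1$ the count $G_1/G_1^{(n)}$ is right, but there $P_1=\{1\}$, so your stated reason does not apply.
\end{itemize}

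Restricting to $\ol{G_{r-k}\times P_k}$ therefore only yields $\Ind_{\ol{G_{r-k}^{(n)}\times P_k^{(n)}}}^{\ol{G_{r-k}\times P_k}}(\tau_1\boxtimes\tau_2|_{\ol{P_k^{(n)}}})$. The splitting over $g_2\in G_k/G_k^{(n)}$ appears only once the second factor has been cut down to the trivial group. This is why the paper sums over $M_{r-k,k}^{[n]}\backslash M_{r-k,k}/(G_{r-k}\times\{1\})$.

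There are two ways to repair this.
\begin{itemize}
\item Use that $\partial^k$ is the twisted Jacquet functor $J_{\{1\}\times U_k,\psi_k}$, viewed as a $\ol{G_{r-k}}$-module. Apply Mackey to the subgroup $G_{r-k}\times U_k$, whose image in $M_{r-k,k}/M_{r-k,k}^{[n]}$ is only the first factor. This gives exactly $|G_k/G_k^{(n)}|$ double cosets.
\item Count orbits stage by stage as in the proof of Proposition \ref{prop:BZ's prop4.13}(2). There is one orbit for each ${}^2\Phi^-$ applied on $P_j$ with $j\geq 3$. There are $[F^\times:F^{\times n}]$ orbits at the step landing in $P_1=\{1\}$, which is already the restriction step when $k=1$.
\end{itemize}

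Either way, each term is $\dim\Wh(\tau_2^{g_2})$ copies of $\Ind_{\ol{G_{r-k}^{(n)}}}^{\ol{G_{r-k}}}(\tau_1)$. Here you do use, as you say, that $\ol{G_{r-k}^{(n)}}$ commutes with $\ol{G_k}$, together with the fact that $\ol{G_{r-k}}$ commutes with $U_k$. The rest of your argument then goes through unchanged.
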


If $\ol{G_r}$ is a KP-cover, then $m_1m_2/m$ is a number depending only on groups but independent of representations. More precisely,  assuming $\gcd(p,n)=1$ for simplicity and using the formula in \cite{zou2022metaplectic}*{Proposition 2.3}, we have 
\[m_1=m_2=n^2\quad\text{and}\quad m=n^4d_r/d_{r-k}d_k\]
and
\begin{equation}\label{eq tensormult}
m_1m_2/m=d_kd_{r-k}/d_r.
\end{equation}

\subsection{Compatibility of the extended Bernstein--Zelevinsky product with derivatives} Following \cite{bernstein1977induced}*{Section 4}, we study the compatibility of the extended Bernstein--Zelevinsky product with derivatives. 

We first define the three types of extended Bernstein--Zelevinsky products following \cite{bernstein1977induced}*{\S 4.12}. Fix $r_1+r_2=r$. We define here the extended product $\ol{\times}:\Rep_{\epsilon}(\ol{H_1})\times\Rep_{\epsilon}(\ol{H_2})\rightarrow\Rep_{\epsilon}(\ol{H})$ in the following three cases:
\begin{enumerate}
\item $H_1=G_{r_1}$, $H_2=G_{r_2}$ and $H=G_r$;
\item $H_1=P_{r_2}$, $H_2=G_{r_1}$ and $H=P_r$;
\item $H_1=G_{r_1}$, $H_2=P_{r_2}$ and $H=P_r$.
\end{enumerate}
We define the product $\ol{\times}$ to be the composition of the extended metaplectic tensor product 
\[\ol{\boxtimes}:\Rep_{\epsilon}(\ol{H_1})\times\Rep_{\epsilon}(\ol{H_2})\rightarrow\Rep_{\epsilon}(\ol{H_1\times H_2})\]
with a twisted induction functor
\[i=i_{U,\mbm{1}}\circ \varepsilon:\Rep_{\epsilon}(\ol{H_1\times H_2})\rightarrow\Rep_{\epsilon}(\ol{H}),\]
where $U$ is a unipotent subgroup of $G_r$ being realized as a subgroup of $\ol{H}$, and $\varepsilon$ is a character of $H_1\times H_2$ which acts on $\Rep_{\epsilon}(\ol{H_1\times H_2})$. (Here $\varepsilon$ is not to be confused with $\epsilon$.) Both $U$, $\varepsilon$ and the way of embedding $H_1\times H_2$ into $H$ in the above three cases are defined exactly as in \cite{bernstein1977induced}*{\S 4.12}, and thus we omit the detail.

However, in the above definition  we need to take special care of the following two degenerate cases:
\begin{enumerate}
\item[$\bullet$] In case (2), if $r_2=1$, the group $H_1$ is trivial and $\pi_1$ is the character $\epsilon$
of the group $\ol{H_1}=\mu_n$, then for a genuine representation $\pi_2$ of $\ol{H_2}$, we formally define the EMTP to be
\[\pi_1\ol{\boxtimes}\pi_2:=[F^{\times}:F^{\times n}]\cdot\Ind_{\ol{\{1\}\times H_2^{(n)}}}^{\ol{\{1\}\times H_2}}(\epsilon\boxtimes \pi_2\rest_{\ol{H_2^{(n)}}})\]
as a genuine representation of $\ol{H_1\times H_2}=\ol{\{1\}\times H_2}$.
\item[$\bullet$] In case (3), if $r_2=1$, the group  $H_2$ is trivial, then we formally define the EMTP to be
\[\pi_1\ol{\boxtimes}\pi_2:=[F^{\times}:F^{\times n}]\cdot\Ind_{\ol{ H_1^{(n)}\times \{1\}}}^{\ol{ H_1\times \{1\}}}( \pi_1\rest_{\ol{H_1^{(n)}}}\boxtimes\epsilon).\]
as a genuine representation of $\ol{H_1\times H_2}=\ol{H_1\times\{1\}}$. 
\end{enumerate}
With these taken into consideration, we compose $\ol{\boxtimes}$ with $i$ to define the product $\ol{\times}$ as in the non-degenerate case.

\begin{prop}[\cite{bernstein1977induced}*{Propsition 4.13}]  \label{prop:BZ's prop4.13}

Let $\rho\in\Rep_{\epsilon}(\ol{G_{r_1}})$, $\sigma\in\Rep_{\epsilon}(\ol{G_{r_2}})$, $\tau\in\Rep_{\epsilon}(\ol{P_{r_2}})$.

\begin{enumerate}
\item In $\Rep_{\epsilon}(\ol{P_{r}})$ there exists an exact sequence
\[
0 \rightarrow  (\rho\rest_{\ol{P_{r_1}}})\ol{\times}\sigma \xrightarrow{} (\rho\ol{\times}\sigma)\rest_{\ol{P_{r}}}  \xrightarrow{}  \rho\ol{\times}(\sigma\rest_{\ol{P_{r_2}}}) \rightarrow 0.
\]

\item If $\Omega$ is one of the functors $\Psi^+,\Phi^+, \Psi^-,\Phi^-$, then $\Omega(\rho\ol{\times}\tau)\simeq\rho\ol{\times}\Omega(\tau)$.

\item One has $\Psi^-(\tau\ol{\times}\rho)=\Psi^-(\tau)\ol{\times}\rho$ and there exists an exact sequence 
\[
0 \rightarrow  \Phi^-(\tau)\ol{\times}\rho \xrightarrow{} \Phi^-(\tau\ol{\times}\rho)  \xrightarrow{} \Psi^-(\tau)\ol{\times}(\rho\rest_{\ol{P_{r_1}}}) \rightarrow 0.\]

\item Suppose that $r_1>0$. Then for any non-zero $\ol{P_r}$-submodule $\omega\subset\tau\ol{\times}\rho$ we have $\Phi^-(\omega)\neq 0$.

\end{enumerate}
	
\end{prop}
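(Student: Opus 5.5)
The plan is to reduce each statement to the linear argument of \cite{bernstein1977induced}*{Proposition 4.13}, using the fact that the extended product is an honest induction. Concretely, in each of the three cases (1)--(3) we have
\[\pi_1\ol{\times}\pi_2 = i_{U,\mbm{1}}\bigl(\varepsilon\cdot \Ind_{\ol{H_1^{(n)}\times H_2^{(n)}}}^{\ol{H_1\times H_2}}(\pi_1\rest_{\ol{H_1^{(n)}}}\boxtimes\pi_2\rest_{\ol{H_2^{(n)}}})\bigr),\]
and by transitivity of induction this equals the compact induction from $\ol{(H_1^{(n)}\times H_2^{(n)})}U$ to $\ol{H}$. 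The latter group is block compatible by criterion (i), so there the ordinary tensor product makes sense. We therefore view each statement as a statement about induction from the smaller group $Q^{[n]}:=(H_1^{(n)}\times H_2^{(n)})U$, or equivalently about the products $\times_n$ of representations of the groups $G_{r_i}^{(n)}$ and $P_{r_i}^{(n)}$, followed by an induction $\Ind$ from a finite-index open subgroup.

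\textbf{Step 1.} Establish the analogue of each statement for the ``$n$-versions'' built from $\Psi_n^\pm,\Phi_n^\pm$ and the groups $G_{r}^{(n)}$, $P_r^{(n)}$. The linear proofs in \cite{bernstein1977induced}*{\S\S 4.13--4.14} are purely geometric. They use Mackey theory for the orbits of $P_r$ (respectively $G_{r_2}\times$mirabolic pieces) on $P\backslash G$, the Bruhat-type double coset $P_{r}\backslash G_r/P_{(r_1,r_2)}$ (which consists of two orbits, one open and one closed, yielding the exact sequence in (1)), and the exactness and commutation properties of $J_{V,\theta}$ and $i_{V,\theta}$. All unipotent groups involved split canonically, and the double coset representatives can be chosen to be permutation matrices or unipotent elements of determinant $1$. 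The orbit decomposition therefore persists for the subgroups of determinant in $F^{\times n}$: one checks that $Q^{[n]}$-orbits on the relevant spaces coincide with the linear orbits up to the finite quotient $F^\times/F^{\times n}$. I would write this out for (1) carefully and note that (2)--(4) follow by the same formal argument.

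\textbf{Step 2.} Transfer to $\ol{H}$ by applying $\Ind_{\ol{H^{(n)}}}^{\ol{H}}$, which is exact. The key compatibility is that each functor $\Omega\in\{\Psi^\pm,\Phi^\pm\}$ commutes with induction from the $(n)$-subgroup, $\Omega\circ\Ind_{\ol{P_r^{(n)}}}^{\ol{P_r}}\simeq\Ind_{\ol{G_{r-1}^{(n)}}}^{\ol{G_{r-1}}}\circ\Omega_n$ (and similarly for $\Phi$). This holds because $V_r$ lies in $P_r^{(n)}$, $\theta_r$ is $P_r$-normalized, and $P_r/P_r^{(n)}\simeq G_{r-1}/G_{r-1}^{(n)}$, so Mackey gives a single double coset. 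In (1), restriction from $\ol{G_r}$ to $\ol{P_r}$ also commutes with the induction, since $P_rG_r^{(n)}=G_r$. The degenerate cases $r_2=1$ are exactly where the normalizing factor $[F^\times:F^{\times n}]$ was inserted; in these cases the count of cosets in Mackey produces precisely that multiplicity, and this should be checked directly.

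\textbf{Step 3.} For (4), argue as in the linear case: a nonzero $\ol{P_r}$-submodule $\omega$ with $\Phi^-(\omega)=0$ would equal $\Psi^+\Psi^-(\omega)$, so $V_r$ acts trivially on $\omega$. But when $r_1>0$, every vector of $\tau\ol{\times}\rho$ is moved nontrivially by $V_r$, as seen by restricting functions to the open cell. Restricting to $\ol{P_r^{(n)}}$ reduces this to the $n$-version.

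The main obstacle is Step 1 together with the bookkeeping in Step 2: verifying that the twisting character $\varepsilon$ and the modulus characters match when passing to $(n)$-subgroups, and that the orbit analysis is unaffected by the non-commutativity of the full blocks $\ol{G_{r_i}}$. The latter is avoided precisely because we only tensor on the block-compatible $(n)$-subgroups.
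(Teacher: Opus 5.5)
Your route reverses the order of the two inductions relative to the paper. The paper keeps the finite induction inside. It runs the arguments of \cite{bernstein1977induced}*{\S 7} verbatim on $\ol{P_r}$, with $\rho\otimes\sigma$ replaced by the EMTP $\rho\ol{\boxtimes}\sigma$, which is a representation of the closed subgroup $\ol{H_1\times H_2}$. The only covering-specific input is then that the EMTP commutes with restriction to $\ol{P_{r_1}\times G_{r_2}}$ and $\ol{G_{r_1}\times P_{r_2}}$, and with the partial functors ${}^i\Psi^{\pm}, {}^i\Phi^{\pm}$. For $\Psi^+,\Phi^+$ this is transitivity of induction. For restriction and for $\Psi^-,\Phi^-$ it is a Mackey count on the finite set $\ol{M_\beta^{[n]}}\backslash\ol{M}$: there is one orbit when the determinant of the relevant stabilizer surjects onto $F^\times/F^{\times n}$, and $[F^\times:F^{\times n}]$ orbits exactly when the mirabolic factor that occurs is $P_1$. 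This is the source of the normalizing factors in the degenerate products. You instead push the finite induction outside. That gains block compatibility (ordinary tensor products), at the cost of redoing the geometry for $G_r^{(n)}, P_r^{(n)}$ and the non-parabolic subgroup $Q^{[n]}$.

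As written, the sketch breaks down precisely in that bookkeeping, in two places.

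First, $\theta_r$ is not normalized by $P_r$: conjugation by $\mathrm{diag}(1,\dots,1,a,1)\in G_{r-1}$ rescales $v_{r-1,r}$ by $a$. Its stabilizer is $P_{r-1}V_r$. Hence your Step 2 compatibility $\Phi^-\circ\Ind_{\ol{P_r^{(n)}}}^{\ol{P_r}}\simeq\Ind_{\ol{P_{r-1}^{(n)}}}^{\ol{P_{r-1}}}\circ\Phi_n^-$ holds only when $\det(P_{r-1})$ surjects onto $F^\times/F^{\times n}$, i.e.\ for $r\geq 3$. For $r=2$ one gets instead $\bigoplus_{a\in F^\times/F^{\times n}}J_{V_2,\theta_2^a}(\tau_n)$, where $\theta_2^a(v)=\psi_F(a v_{12})$.

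Second, Step 1 is not ``the same formal argument.'' In general $Q^{[n]}$ is not a parabolic subgroup of $H^{(n)}$: its Levi part has index $[F^\times:F^{\times n}]$ in $(H_1\times H_2)\cap H^{(n)}$. Consequently the orbits of $P_r^{(n)}$ (respectively $P_{r-1}^{(n)}V_r$) on $H^{(n)}/Q^{[n]}$ split further whenever the relevant stabilizer has determinant in $F^{\times n}$.

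Concretely, take (2) with $r_2=2$. The naive analogue $\Phi_n^-(\rho_n\times_n\tau_n)\simeq\rho_n\times_n\Phi_n^-(\tau_n)$ is false in general. The left side is a sum over $a\in F^\times/F^{\times n}$ of terms built from conjugates of $\rho_n$ and $J_{V_2,\theta_2^a}(\tau_n)$. It is exactly this splitting which, after inducing up, produces the factor $[F^\times:F^{\times n}]$ in $\rho\ol{\times}\Phi^-(\tau)$. Note that this degenerate $\ol{P_1}$-product arises from $r_2=2$, not only from the input case $r_2=1$ that you flagged.

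So the covering-specific content of the proposition is exactly the orbit count you defer, and in part you assert it incorrectly. Once you carry it out, your Step 1 becomes the paper's Mackey computation transported to the $(n)$-level, and Step 2 must be corrected in rank $2$.
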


\begin{proof}

 For (1), we follow the argument in \cite[\S 7.1]{bernstein1977induced} and just replace $\rho\otimes \sigma$ there by $\rho \ol{\boxtimes} \sigma$, then we get an exact sequence \[
0 \rightarrow i^{\ol{P_r}}_{\ol{P_{r_1}\times G_{r_2}}}((\rho \ol{\boxtimes}\sigma)\rest_{\ol{P_{r_1}\times G_{r_2}}}) \xrightarrow{} (\rho\ol{\times}\sigma)\rest_{\ol{P_{r}}}  \xrightarrow{}  i^{\ol{P_r}}_{\ol{G_{r_1}\times P_{r_2}}}(\rho \ol{\boxtimes}\sigma)\rest_{\ol{G_{r_1}\times P_{r_2}}}) \rightarrow 0.
\]
Notice that if $r_1\neq 1$, then $\ol{P_{r_1}} \cap \ol{G^{(n)}_{r_1}}=\ol{P^{(n)}_{r_1}}$ and $\ol{P_{r_1}}/\ol{P^{(n)}_{r_1}} \simeq \ol{G_{r_1}}/\ol{G^{(n)}_{r_1}} \simeq F^{\times}/F^{\times n}$. Thus the group $ \ol{P_{r_1}\times G_{r_2}} $ has only one orbit on $\ol{G^{(n)}_{r_1}\times G^{(n)}_{r_2} } \backslash \ol{G_{r_1}\times G_{r_2}}$. By \cite[Theorem 5.2]{bernstein1977induced} $$(\rho \ol{\boxtimes}\sigma)\rest_{\ol{P_{r_1}\times G_{r_2}}} \simeq (\rho\rest_{\ol{P_{r_1}}})\ol{\boxtimes} \sigma.$$
If $r_1=1$, then $P_{r_1}$ is trivial and $ \ol{P_{r_1}\times G_{r_2}} $ has $[\ol{G_{r_1}}:\ol{G^{(n)}_{r_1}}]=[F^{\times}:F^{\times n}]$ many orbits on $\ol{G^{(n)}_{r_1}\times G^{(n)}_{r_2} } \backslash \ol{G_{r_1}\times G_{r_2}}$. Each orbit is open and has a representative of the form $(\ol{G^{(n)}_{r_1}\times G^{(n)}_{r_2} })g_1^{-1}$ for $g_1 \in \ol{G_{r_1}\times \{{1\}}}$. Since elements in $\ol{G_{r_1}\times \{1\}}$ commute with elements in $\ol{\{1\}\times G_{r_2}^{(n)}}$, by \cite[Theorem 5.2]{bernstein1977induced} we have 
$$(\rho \ol{\boxtimes}\sigma)\rest_{\ol{P_{r_1}\times G_{r_2}}}\simeq\bigoplus_{g_1\in\ol{G_{r_1}^{(n)}\times \{1\}}\backslash\ol{G_{r_1}\times \{1\}}}\Ind_{\ol{\{1\}\times G_{r_2}^{(n)}}}^{\ol{\{1\}\times G_{r_2}}}((\rho\rest_{\ol{P_{r_1}}}\boxtimes\sigma\rest_{\ol{G_{r_2}^{(n)}}})^{g_1}) \simeq (\rho\rest_{\ol{P_{r_1}}})\ol{\boxtimes} \sigma.$$
In parallel, we have  $$(\rho \ol{\boxtimes}\sigma)\rest_{\ol{G_{r_1}\times P_{r_2}}} \simeq \rho\ol{\boxtimes} (\sigma\rest_{\ol{P_{r_2}}}),$$
regardless of whether $r_2=1$ or not. This finishes the proof of Statement (1). 

For (2), we need to prove the compatibility of the ``partial version'' of the above functors and EMTP, i.e., to show
 \[\,^2\Omega(\rho \ol{\boxtimes} \tau) \simeq \rho \ol\boxtimes \Omega(\tau)\] when $\Omega=\Psi^+,\Phi^+, \Psi^-,\Phi^-$.  
Here $\,^i\Omega$ means applying the functor $\Omega$ to the $i$-th coordinate. 
First we consider $\Omega=\Psi^+$ or $\Phi^+$. By \cite[1.9(c)]{bernstein1977induced} we have $$\,^2\Omega(\rho \ol{\boxtimes} \tau)\simeq\text{Ind}^{\ol{G_{r_1}\times P_{r_2+1}}}_{\ol{G^{(n)}_{r_1}\times P^{(n)}_{r_2+1} }}\,^2\Omega_n(\rho\rest_{\ol{G^{(n)}_{r_1}}}\boxtimes \tau\rest_{\ol{P^{(n)}_{r_2}}})\simeq \text{Ind}^{\ol{G_{r_1}\times P_{r_2+1}}}_{\ol{G^{(n)}_{r_1}\times P^{(n)}_{r_2+1} }}(\rho\rest_{\ol{G^{(n)}_{r_1}}}\boxtimes \Omega_n(\tau\rest_{\ol{P^{(n)}_{r_2}}}))=\rho \ol\boxtimes \Omega(\tau).$$
If $\Omega=\Psi^-$, since $\ol{G_{r_1}\times G_{r_2-1}}\cdot V_{r_2}$  has only one orbit on $\ol{G^{(n)}_{r_1}\times P^{(n)}_{r_2} } \backslash \ol{G_{r_1}\times P_{r_2}}$, then using \cite[Theorem 5.2]{bernstein1977induced} we have $\,^2\Psi^-(\rho \ol{\boxtimes} \tau) \simeq \rho \ol\boxtimes \Psi^-(\tau)$. If $\Omega=\Phi^-$, we should consider the orbits of $\ol{G_{r_1}\times P_{r_2-1}}\cdot V_{r_2}$ on $\ol{G^{(n)}_{r_1}\times P^{(n)}_{r_2} } \backslash \ol{G_{r_1}\times P_{r_2}}.$ Depending on $r_2-1\neq 1$ or $r_2-1=1$, we have either one orbit or $[F^{\times}:F^{\times n}]$ many orbits.  
With a similar discussion to the argument of (1) using \cite[Theorem 5.2]{bernstein1977induced}, we also have $\,^2\Phi^-(\rho \ol{\boxtimes} \tau) \simeq \rho \ol\boxtimes \Phi^-(\tau)$. After this, by \cite[\S 7.2]{bernstein1977induced} we have $$ \Omega(\rho \ol{\times} \tau)=i(^2\Omega(\rho\ol{\boxtimes}\tau)), $$where $i$ is the corresponding twisted induction functor. Thus (2) follows from this formula and the above compatibility result.

For (3), \cite[\S 7.2]{bernstein1977induced} shows that $\Psi^-(\tau \ol{\boxtimes} \rho)=i(^1\Psi^-(\tau \ol{\boxtimes} \rho))$ and there exists an exact sequence \[
0 \rightarrow i(^1\Phi^-(\tau \ol{\boxtimes} \rho)) \xrightarrow{} \Phi^-(\tau\ol{\times}\rho)  \xrightarrow{} i(^1\Psi^-(\tau \ol{\boxtimes} \rho)|_{\ol{G_{r_2}\times P_{r_1}}}) \rightarrow 0.\] 
Symmetric to the argument for $\,^2\Omega$ above, we can prove the compatibility for $\,^1\Omega$ with the EMTP, i.e.,
\[ \,^1\Psi^-(\tau\ol\boxtimes\rho)\simeq\Psi^-(\tau)\ol\boxtimes \rho\quad\text{and}\quad\,^1\Phi^-(\tau\ol\boxtimes\rho)\simeq\Phi^-(\tau).\] 
Then Statement (3) follows from these facts as well as the isomorphism $(\Psi^-(\tau) \ol{\boxtimes}\rho)\rest_{\ol{G_{r_2}\times P_{r_1}}} \simeq \Psi^-(\tau)\ol{\boxtimes} (\rho\rest_{\ol{P_{r_1}}})$ shown in (1).

Statement (4) follows from the same argument as in \cite[\S 7.3]{bernstein1977induced}, without essential change.
\end{proof}

As a direct consequence, we have

\begin{cor}
Let $\rho\in\Rep(\ol{G_{r_1}})$, $\sigma\in\Rep(\ol{G_{r_2}})$, $\tau\in\Rep(\ol{P_{r_2}})$.
\begin{enumerate}
\item If $i\gest1$, then $(\rho\ol{\times}\tau)^{(i)}\simeq\rho\ol{\times}\tau^{(i)}$.

\item If $i\gest 1$, then $(\tau\ol{\times}\rho)^{(i)}$ is glued from $\tau^{(j)}\ol{\times}\rho^{(i-j)}$, $j=1,2,\dots,i$.

\item If $i\gest 0$, then $(\rho\ol{\times}\sigma)^{(i)}$ is glued from $\rho^{(j)}\ol{\times}\sigma^{(i-j)}$, $j=0,1,\dots,i$.

\end{enumerate}
	
\end{cor}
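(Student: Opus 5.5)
This corollary is the covering analogue of \cite{bernstein1977induced}*{Corollary 4.14}. I would prove it by feeding the definitions $\tau^{(i)}=\Psi^-\circ(\Phi^-)^{i-1}(\tau)$ and $\pi^{(i)}=(\pi\rest_{\ol{P_r}})^{(i)}$ into Proposition \ref{prop:BZ's prop4.13}, using only the exactness of $\Phi^-$ and $\Psi^-$. The three parts are proved in the order (1), (2), (3), each depending on the previous ones.

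For (1), apply Proposition \ref{prop:BZ's prop4.13}(2) with $\Omega=\Phi^-$ exactly $i-1$ times. This gives $(\Phi^-)^{i-1}(\rho\ol{\times}\tau)\simeq \rho\ol{\times}(\Phi^-)^{i-1}(\tau)$, where at each stage the second factor is again a representation of a mirabolic $\ol{P_{r_2-j}}$. Applying (2) once more with $\Omega=\Psi^-$ gives $(\rho\ol{\times}\tau)^{(i)}\simeq\rho\ol{\times}\tau^{(i)}$.

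For (2), induct on $i$, and prove a slightly stronger claim so that the induction closes. The claim is that $(\Phi^-)^{i-1}(\tau\ol{\times}\rho)$ is glued from $(\Phi^-)^{i-1}(\tau)\ol{\times}\rho$ together with the representations $(\Phi^-)^{j-1}\Psi^-$-type pieces of the form $\tau^{(j)}\ol{\times}(\Phi^-)^{i-1-j}(\rho\rest_{\ol{P_{r_1}}})$ for $1\lest j\lest i-1$. These pieces are products of type (3), with a $\ol{G}$-factor on the left and a mirabolic factor on the right. The base case $i=1$ is the statement $\Psi^-(\tau\ol{\times}\rho)=\Psi^-(\tau)\ol{\times}\rho$ from Proposition \ref{prop:BZ's prop4.13}(3). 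For the inductive step, apply the exact functor $\Phi^-$ to the filtration. On the first piece use the exact sequence in Proposition \ref{prop:BZ's prop4.13}(3); on the type-(3) pieces use Proposition \ref{prop:BZ's prop4.13}(2). Finally apply $\Psi^-$ to every piece, using (3) for the first and (2) for the others. This yields the pieces $\tau^{(i)}\ol{\times}\rho$ and $\tau^{(j)}\ol{\times}(\rho\rest_{\ol{P_{r_1}}})^{(i-j)}=\tau^{(j)}\ol{\times}\rho^{(i-j)}$, as required. The degenerate cases $r_2=1$, or when some factor $P_1$ is trivial, carry the multiplicity $[F^\times:F^{\times n}]$ built into the definition of the EMTP. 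I must check that these conventions were chosen precisely so that Proposition \ref{prop:BZ's prop4.13} holds verbatim in those cases. This bookkeeping is the only real obstacle; everything else is formal.

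For (3), restrict $\rho\ol{\times}\sigma$ to $\ol{P_r}$ and use the exact sequence of Proposition \ref{prop:BZ's prop4.13}(1). The quotient is $\rho\ol{\times}(\sigma\rest_{\ol{P_{r_2}}})$; by (1) its $i$-th derivative is $\rho\ol{\times}\sigma^{(i)}$, which is the $j=0$ term. The subobject is $(\rho\rest_{\ol{P_{r_1}}})\ol{\times}\sigma$; by (2) its $i$-th derivative is glued from $\rho^{(j)}\ol{\times}\sigma^{(i-j)}$ for $1\lest j\lest i$. For $i=0$ the statement is trivial. Since derivatives are exact, the gluing of all these pieces gives (3).
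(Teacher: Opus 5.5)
Your proposal is correct and is the argument the paper intends: the paper records this corollary as a direct consequence of Proposition \ref{prop:BZ's prop4.13}, with no further proof. Your steps are exactly that deduction, as in the linear case of \cite{bernstein1977induced}: part (2) of the proposition iterated for (1), the strengthened induction on $(\Phi^-)^{i-1}(\tau\ol{\times}\rho)$ using parts (2) and (3) for (2), and the restriction sequence of part (1) combined with (1) and (2) for (3). The degenerate cases you flag need no separate check, since Proposition \ref{prop:BZ's prop4.13} already incorporates those conventions (its proof treats $r_1=1$ and $r_2-1=1$ separately), and products with a $\ol{G_0}$-factor are read via the convention in the remark following Corollary \ref{cor productderivative}.
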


This ``Lebnitz law'' could be generalized to the product of finitely many representations up to multiplicity, if we confine to finite-length representations.

\begin{cor}\label{cor productderivative}

Let $\rho_i\in\Rep_{\epsilon}^{\rm fl}(\ol{G_{r_i}})$ with $r=r_1+\dots+r_k$. Then for some positive integer $N$, the $N$-th multiple 
of the derivative $(\rho_1\ol{\times}\dots\ol{\times}\rho_k)^{(l)}$ is glued from multiples of $\rho_1^{(l_1)}\ol{\times}\dots\ol{\times}\rho_k^{(l_k)}$ with non-negative integers $l_i$ such that $l_1+\dots+l_k=l$.

\end{cor}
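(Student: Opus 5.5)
We argue by induction on $k$, reducing each step to the two-factor Leibniz rule in part (3) of the preceding corollary and using Proposition \ref{prop BZPasso} to rebracket. For $k=1$ there is nothing to prove (take $N=1$). For $k=2$, part (3) of the preceding corollary says that $(\rho_1\ol{\times}\rho_2)^{(l)}$ is glued from $\rho_1^{(l_1)}\ol{\times}\rho_2^{(l_2)}$ with $l_1+l_2=l$, so $N=1$ again works.

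For the inductive step, set $\sigma:=\rho_2\ol{\times}\dots\ol{\times}\rho_k$, which is of finite length by the remark following the definition of the extended Bernstein--Zelevinsky product. By Proposition \ref{prop BZPasso}, applied iteratively as in the proof of Proposition \ref{prop MTPIterate}(2), there are positive integers $a,b$ with
\[a\cdot(\rho_1\ol{\times}\dots\ol{\times}\rho_k)\simeq b\cdot(\rho_1\ol{\times}\sigma).\]
Since $(\cdot)^{(l)}$ is additive (it is a composite of exact additive functors), we get
\[a\cdot(\rho_1\ol{\times}\dots\ol{\times}\rho_k)^{(l)}\simeq b\cdot(\rho_1\ol{\times}\sigma)^{(l)}.\]
By the two-factor case, $(\rho_1\ol{\times}\sigma)^{(l)}$ is glued from $\rho_1^{(l_1)}\ol{\times}\sigma^{(l')}$ with $l_1+l'=l$. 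By induction, for each $l'$ there is $N'$ such that $N'\cdot\sigma^{(l')}$ is glued from multiples of $\rho_2^{(l_2)}\ol{\times}\dots\ol{\times}\rho_k^{(l_k)}$ with $l_2+\dots+l_k=l'$. We take one $N'$ working for all finitely many $l'$, replacing $N'$ by a common multiple; multiplying a gluing by a positive integer is again a gluing.

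The functor $\rho_1^{(l_1)}\ol{\times}(-)$ is exact, being a composite of restriction, tensoring, and (compact) inductions. It therefore transports a filtration of $N'\sigma^{(l')}$ to a filtration of $N'\cdot(\rho_1^{(l_1)}\ol{\times}\sigma^{(l')})$ with subquotients that are multiples of $\rho_1^{(l_1)}\ol{\times}(\rho_2^{(l_2)}\ol{\times}\dots\ol{\times}\rho_k^{(l_k)})$. Each of these is of finite length (derivatives preserve finite length), so Proposition \ref{prop BZPasso} applies once more: after multiplying by a further positive integer, each such subquotient becomes a multiple of $\rho_1^{(l_1)}\ol{\times}\dots\ol{\times}\rho_k^{(l_k)}$. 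Collecting the finitely many multipliers and taking their product (or lcm) together with $a$, we obtain $N$ such that $N\cdot(\rho_1\ol{\times}\dots\ol{\times}\rho_k)^{(l)}$ is glued from multiples of the desired products. One uses here that a multiple of a glued object is glued from multiples of the pieces, by taking direct sums of filtrations.

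The main subtlety is the bookkeeping of the word ``glued'' under multiplication. An isomorphism $a X\simeq bY$ with $Y$ glued from pieces $Y_j$ only yields that $aX$ is glued from multiples of the $Y_j$, so the multiplicity must be absorbed into $N$ on the left side. The statement is phrased precisely to allow this, and we will be careful that all multipliers are positive integers independent of the filtration step. We must also note that the degenerate cases where some $l_i$ equals $r_i$, so that $\rho_i^{(r_i)}$ is a representation of $\ol{G_0}=\mu_n$, are covered by the conventions of the degenerate extended metaplectic tensor product introduced before Proposition \ref{prop:BZ's prop4.13}, which again contribute only positive integer factors $[F^\times:F^{\times n}]$.
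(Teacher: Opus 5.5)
Your argument is correct and is essentially the paper's proof: the paper just says the result follows from the two-factor case (part (3) of the preceding corollary) together with Proposition \ref{prop BZPasso}, and you have made the induction on $k$ and the multiplicity bookkeeping explicit. Like the paper, you tacitly use the $k$-factor version of Proposition \ref{prop BZPasso}, which is proved the same way from Proposition \ref{prop MTPIterate}(2); also, the degenerate convention that matters here is the remark right after this corollary, not the one preceding Proposition \ref{prop:BZ's prop4.13}.
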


\begin{proof}

It follows from the above corollary for $k=2$ and Proposition \ref{prop BZPasso}. 

\end{proof}

\begin{rmk}

In the above two corollaries, there exist degenerate cases that should to be explained as follows: Consider $\rho_i\in\Rep_{\epsilon}^{fl}(\ol{G_{r_i}})$ with $r_1+\dots+r_k=r$ and possibly $r_i\geq 0$ for some $i$. More precisely, let $\{i_1,\dots,i_s\}$ be the subset of $\{1,\dots,k\}$ such that $r_i\neq 0$, then we formally define
\[\rho_1\ol{\times}\dots\ol{\times}\rho_k=[F^{\times}:F^{\times n}]^{k-s}\cdot\big(\prod_{i\notin\{i_1,\dots,i_s\}}\dim \rho_i \big)\cdot(\rho_{i_1}\ol{\times}\dots\ol{\times}\rho_{i_s}).\]

\end{rmk}

\subsection{Compatibility of the $n$-cuspidal support with derivatives}

\begin{lm}[\cite{bernstein1977induced}*{Lemma 4.7}] \label{lm:BZ's lemma4.7}

Let $\rho_i\in\Cusp_{\epsilon}(\ol{G_{r_i}})$ and $\pi=\rho_1\ol{\times}\dots\ol{\times}\rho_k$, where $r=r_1+\dots+r_k$.

\begin{enumerate}
\item If $\sigma$ is an irreducible subquotient of $\pi^{(l)}$ ($l=0,1,\dots,r$), then the $n$-cuspidal support of $\sigma$ is contained in $[\rho_1]+\dots+[\rho_k]$.

\item If $\omega$ is a non-zero subrepresentation of $\pi\rest_{\ol{P_r}}$, and $\omega^{(l)}$ is the highest derivative of $\omega$ and $\sigma$ is an irreducible subrepresentation of $\omega^{(l)}$. Then the $n$-cuspidal support of $\sigma\nu $ is contained in $[\rho_1]+\dots+[\rho_k]$.

\end{enumerate}

\end{lm}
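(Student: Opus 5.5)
We follow the argument of \cite{bernstein1977induced}*{Lemma 4.7}, replacing ordinary products by the extended products $\ol{\times}$ and cuspidal supports by $n$-cuspidal supports.

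\emph{Part (1).} By Corollary \ref{cor productderivative}, a positive multiple $N\cdot\pi^{(l)}$ is glued from multiples of $\rho_1^{(l_1)}\ol{\times}\dots\ol{\times}\rho_k^{(l_k)}$ with $l_1+\dots+l_k=l$. Lemma \ref{L:cuspderivative} shows that each $\rho_i^{(l_i)}$ vanishes unless $l_i\in\{0,r_i\}$. When $l_i=r_i$, the derivative $\rho_i^{(r_i)}$ is a finite sum of copies of $\epsilon$ on $\mu_n=\ol{G_0}$. Using the degenerate-case convention of the subsequent remark, each nonzero term is therefore a positive multiple of $\rho_{i_1}\ol{\times}\dots\ol{\times}\rho_{i_s}$, where $\{i_1,\dots,i_s\}=\{i: l_i=0\}$.

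Every irreducible subquotient $\sigma$ of $N\cdot\pi^{(l)}$ is a subquotient of one of these terms. By exactness of $i_{\ol P}^{\ol G}$ and Corollary \ref{cor MTP}, it is then a subquotient of $i_{\ol P}^{\ol G}(\rho')$ for some irreducible summand $\rho'$ of $\rho_{i_1}\ol{\boxtimes}\dots\ol{\boxtimes}\rho_{i_s}$, and $\rho'$ is cuspidal. By the discussion of $n$-cuspidal supports, the $n$-cuspidal support of $\sigma$ is $[\rho_{i_1}]+\dots+[\rho_{i_s}]$, which is contained in $[\rho_1]+\dots+[\rho_k]$. Passing from $N\cdot\pi^{(l)}$ to $\pi^{(l)}$ changes nothing, since the two have the same irreducible subquotients.

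\emph{Part (2).} Here we mimic the BZ filtration argument. First, $\pi\rest_{\ol{P_r}}$ is glued from $(\Phi^+)^{j-1}\Psi^+(\pi^{(j)})$, and every $\pi^{(j)}$ has finite length.

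Let $\omega\subset\pi\rest_{\ol{P_r}}$ have highest derivative $\omega^{(l)}$. The key step is to show that $\omega^{(l)}$ embeds, up to the twist by $\nu$ that comes from normalization, into something built from $\pi^{(l)}$. The cleanest route is the pairing method.

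Take contragredients: by Lemma \ref{lem MTPcontragredient}, $\pi^\vee$ is $\rho_1^\vee\ol{\times}\dots\ol{\times}\rho_k^\vee$ up to the usual identification. The natural $\delta_{\ol{P_r}}^{-1}$-pairing between $\omega$ and $\omega^\vee$ gives a pairing between $\pi^\vee\rest_{\ol{P_r}}$ and $\omega$ that is non-degenerate with respect to $\omega$. Proposition \ref{prop derivativepairing} then produces a pairing of $(\pi^\vee)^{(l)}$ and $\omega^{(l)}$ that is non-degenerate with respect to $\omega^{(l)}$. Hence $\omega^{(l)}$ embeds into a (twisted) contragredient of $(\pi^\vee)^{(l)}$, and the modular-character twist produces the $\nu$.

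Consequently the irreducible subrepresentation $\sigma$ satisfies that $\sigma\nu$ is the contragredient of a subquotient of $(\pi^\vee)^{(l)}$. Applying part (1) to $\rho_i^\vee\in\Cusp_{\epsilon^{-1}}$, and using $[\rho^\vee]=[\rho]^\vee$ together with the fact that contragredients reverse $n$-cuspidal supports, gives the claim.

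\emph{Main obstacle.} The hard step is bookkeeping the exact twist: the $\delta_{\ol{P_r}}^{-1}$-pairing versus the normalized functors. It must be checked that the twist is exactly $\nu$ (as in BZ, where $\delta$ restricted to derivatives gives $\nu^{\pm1}$) and that it does not interfere with the $n$-equivalence classes. Twisting by $\nu$ commutes with $X^*(G/G^{(n)})$-orbits, so only the character bookkeeping, which is identical to the linear case, is needed. We also need to check that the pairing in Proposition \ref{prop derivativepairing} applies to $\ol{P_r}$-modules of finite type here, and this holds verbatim.
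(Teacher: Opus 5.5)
Your proposal is correct and follows the paper's proof. Part (1) goes via Corollary \ref{cor productderivative} and Lemma \ref{L:cuspderivative}, and part (2) goes via the pairing argument with Proposition \ref{prop derivativepairing}, followed by part (1) applied to contragredients. The twist you flag as the ``main obstacle'' is settled in the paper by pairing $\omega\subset\pi\rest_{\ol{P_r}}$ with $\tilde{\pi}:=\pi^{\vee}\nu\simeq\rho_1^{\vee}\nu\ol{\times}\dots\ol{\times}\rho_k^{\vee}\nu$. Note that the natural pairing of $\omega$ with $\pi^\vee$ is $\mbm{1}$-invariant, not a $\delta_{\ol{P_r}}^{-1}$-pairing as you wrote; twisting by $\nu$ is exactly what makes it one. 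With that pairing, $\sigma^{\vee}$ is a quotient of $\tilde{\pi}^{(l)}$, and part (1) places its $n$-cuspidal support in $[\rho_1^\vee\nu]+\dots+[\rho_k^\vee\nu]$.
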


\begin{proof}

For (1), using Corollary \ref{cor productderivative} we know that $\sigma$ is an irreducible subquotient of $\rho_1^{(l_1)}\ol{\times}\dots\ol{\times}\rho_k^{(l_k)}$ for some $l_i$ such that $l_1+\dots+l_k=l$. Using Lemma \ref{L:cuspderivative}, each $l_i$ is either 0 or $r_i$, which implies that the $n$-cuspidal support of $\sigma$ is a multisubset of $[\rho_1]+\dots+[\rho_k]$. For (2), we consider (\emph{cf.} Lemma \ref{lem MTPcontragredient} and \cite{bernstein1977induced}*{2.3.(d), 1.9.(f)})
\[\pi=\rho_1\ol{\times}\dots\ol{\times}\rho_k\quad\text{and}\quad \tilde{\pi}:=\pi^{\vee}\nu=(\rho_1\ol{\times}\dots\ol{\times}\rho_k)^{\vee}\nu\simeq\rho_1^{\vee}\nu\ol{\times}\dots\ol{\times}\rho_k^{\vee}\nu.\]
Restricting to $\ol{P_{r}}$, there is a  $\delta_{\ol{P_r}}^{-1}$-pairing of $\pi\rest_{\ol{P_{r}}}$ and $\tilde{\pi}\rest_{\ol{P_{r}}}$ which is non-degenerate with respect to $\pi\rest_{\ol{P_{r}}}$. Restricting to $\omega$, there is a $\delta_{\ol{P_r}}^{-1}$-pairing of $\omega$ and $\tilde{\pi}\rest_{\ol{P_{r}}}$ which is non-degenerate with respect to $\omega$. Using Proposition \ref{prop derivativepairing}, there is a  $\mbm{1}$-pairing of $\omega^{(l)}$ and $\tilde{\pi}^{(l)}$ which is non-degenerate with respect to $\omega^{(l)}$. This means that $\sigma^{\vee}$ is an irreducible subquotient of $\tilde{\pi}^{(l)}$. Since each $\rho_i^{\vee}\nu$ is cuspidal, using (1) the $n$-cuspidal support of $\sigma^{\vee}$ is in $[\rho_1^{\vee}\nu]+\dots+[\rho_k^{\vee}\nu]$, thus the $n$-cuspidal support of $\sigma\nu $ is  in $[\rho_1]+\dots+[\rho_k]$.

\end{proof}

\section{Bernstein--Zelevinsky derivatives for Kazhdan--Patterson and Savin covers}\label{section resultKPS}

In this section, we take $\ol{G_r}$ to be either a KP-cover or an S-cover of $G_r$, $r\gest 1$. 

\subsection{Zelevinsky classification for Kazhdan--Patterson and Savin covers}

First, for $\rho \in {\rm Cusp}_\epsilon(\ol{G_{r_0}})$, there exists a unique positive real number $l(\rho) \in \R_{>0}$, such that the related parabolic induction $(\rho\times\rho\nu^{s})_{\omega}$ is reducible if and only if $s=\pm l(\rho)/n$. In \cite{zou2025simple}*{\S 6.3, \S 9.5} and under the tame condition, the number $l(\rho)$ is explicitly calculated, which in particular is a positive integer dividing $n$. Thus, we write 
$$n(\rho):=\frac{n}{l(\rho)} \in \N_{\gest 1},$$
which also gives an unramified character
$\nu_{\rho}=\nu^{1/n(\rho)}$  of $G_{r_0}$.

\begin{rmk}

For a KP-cover, an alternative way of characterizing $l(\rho)$ is that it is the cardinality of the cuspidal support of the metaplectic lift of $\rho$ as a discrete series representations of $G_{r_0}$ (\emph{cf.} \cite{zou2022metaplectic}*{Proposition 3.14}). In particular, $l(\rho)$ divides $r_0$ as well.

\end{rmk}

We consider segment and multisegments. Recall that a \emph{segment} $\Delta=[a,b]_{\rho}$ is of the form
\[\begin{cases}
\{\rho\nu_{\rho}^a,\rho\nu_{\rho}^{a+1}\dots,\rho\nu_{\rho}^b\} &  \text{ if $\ol{G_r}$ is an S-cover}, \\
\{[\rho\nu_{\rho}^a],[\rho\nu_{\rho}^{a+1}]\dots,[\rho\nu_{\rho}^b]\} &  \text{ if $\ol{G_r}$ is a KP-cover},
\end{cases}  \]
where $\rho\in\Cusp_{\epsilon}(\ol{G_{r_0}})$ for some $r_0\gest 1$, and $a\lest b$ are integers. In our convention, $[a,a-1]_{\rho}$ is the empty set. A \emph{multisegment}  is a finite multiset of segments. For two segments, we may discuss whether they are linked or whether one precedes the other as in \cite{kaplan2022classification}*{\S 7.1}. More precisely, two segments $\Delta_1,\Delta_2$ are called \emph{linked} if their union $\Delta_1\cup\Delta_2$ is a strictly larger segment. In this case, we may write $\Delta_1=[a_1,b_1]_{\rho}$ and $\Delta_2=[a_2,b_2]_{\rho}$ for some common $\rho\in\Cusp_{\epsilon}(\ol{G_{r_0}})$. Then, either $\Delta_1$ \emph{precedes} $\Delta_2$ meaning that $a_1<a_2$, $b_1<b_2$ and $b_1\gest a_2-1$; or $\Delta_2$ \emph{precedes} $\Delta_1$ meaning that $a_2<a_1$, $b_2<b_1$ and $b_2\gest a_1-1$.

Given a segment $\Delta=[a,b]_{\rho}$, consider the Bernstein--Zelevinsky product
\[(\rho\nu_{\rho}^a\times\rho\nu_{\rho}^{a+1}\times\dots\times\rho\nu_{\rho}^b)_{\omega}\]
which has a unique irreducible subrepresentation denoted by $Z([a,b]_{\rho})_{\omega}$ and a unique irreducible quotient denoted by $L([a,b]_{\rho})_{\omega}$. For KP-covers, the definition here depends only on the choice of a compatible central character $\omega$ but not the representative $\rho\nu_{\rho}^i$ in the $n$-equivalence classes $[\rho\nu_{\rho}^i]$. We write $Z(\Delta)$ (resp. $L(\Delta)$) for the $n$-equivalence class of $Z([a,b]_{\rho})_{\omega}$ (resp. $L([a,b]_{\rho})_{\omega}$), or by abuse of notation, for any randomly chosen representative in that class. For S-cover, since this $\omega$ is only symbolic, we just write $Z(\Delta)=Z([a,b]_{\rho})_{\omega}$ and $L(\Delta)=L([a,b]_{\rho})_{\omega}$ as actual irreducible representations. 

Now given a multisegment $\mfr m=\{\Delta_1,\dots,\Delta_k\}$, we may rearrange these segments such that $\Delta_i$ does not precede $\Delta_j$ for $1\lest i<j\lest k$. Then the Bernstein--Zelevinsky product
\[(Z(\Delta_1)\times\dots\times Z(\Delta_k))_{\omega}\quad(\text{resp.}\ (L(\Delta_1)\times\dots\times L(\Delta_k))_{\omega})\]
has a unique irreducible subrepresentation (resp. quotient) denoted by
$Z(\mfr m)_{\omega}$ (resp. $L(\mfr m)_{\omega}$). When ranging over all such $\mfr m$ and compatible central characters $\omega$ in the KP-cover case, these $Z(\mfr m)_{\omega}$ (resp. $L(\mfr m)_{\omega}$) uniquely represent all possible genuine irreducible representations of $\ol{G_r}, r\gest 1$. In the KP-covers case, we write $Z(\mfr m)$ (resp. $L(\mfr m)$) for the $n$-equivalence class of $Z(\mfr m)_{\omega}$ (resp. $L(\mfr m)_{\omega}$), or by abuse of notation, any randomly picked represenative in that class. For S-cover, since $\omega$ is only symbolic, we simply write $Z(\mfr m)=Z(\mfr m)_{\omega}$ (resp. $L(\mfr m)=L(\mfr m)_{\omega}$) as irreducible representations.

For later use, we define 
\[\Pi(\mfr m)_{\omega}:=(Z(\Delta_1)\times\dots\times Z(\Delta_k))_{\omega}.\]
In the KP-cover case, if the choice of central character $\omega$ is immaterial, then we simply fix a random  $\omega$ and write $\Pi(\mfr m)$ instead by omitting the subscript $\omega$. We also define the extended  Bernstein--Zelevinsky product 
\[\ol \Pi(\mfr m)=Z(\Delta_1)\ol\times\dots\ol\times Z(\Delta_k).\] 
Using Proposition \ref{prop irredBZPandEBZP}, $\Pi(\mfr m)$ is a direct summand of $\ol \Pi(\mfr m)$.

In the case of KP-covers, given multisegments $\mfr m_1,\dots,\mfr m_k$ of the same size, we write 
\[Z(\mfr m_1)+\dots+Z(\mfr m_k)\quad (\text{resp.}\ k\cdot Z(\mfr m_1))\]
for the direct sum of the $k$-many $n$-equivalence classes (resp. $k$-isotypic sum), and also for the sum of $k$-many randomly chosen irreducible representations in the corresponding classes, whenever the choice of representations in each $n$-equivalence class is immaterial for us. Moreover, equality like
\[Z(\mfr m_1)+\dots+Z(\mfr m_k)=Z(\mfr m_1')+\dots+Z(\mfr m_k')\]
means that there are compatible central characters $\omega_i$, $\omega_i'$, $i=1,\dots,k$ such that
\[Z(\mfr m_1)_{\omega_1}+\dots+Z(\mfr m_k)_{\omega_k}=Z(\mfr m_1')_{\omega_1'}+\dots+Z(\mfr m_k')_{\omega_k'}.\]
Such compressed notation will help us to simplify the statement and argument of results below.

We also recall the following proposition for later purpose.

\begin{prop}[\cite{kaplan2022classification}*{Proposition 7.2}]\label{prop JacquetZL}

Given $\rho\in\Cusp_{\epsilon}(\ol{G_{r_0}})$ and $a\lest b$ such that $r=r_0(b-a+1)$. Write $N_{r-k,k}$ for the standard unipotent radical associated with the composition $(r-k,k)$, then
\[J_{N_{r-k,k}}(Z([a,b]_{\rho})_{\omega})=\begin{cases} (Z([a,b-s]_{\rho})\boxtimes Z([b-s+1,b]_{\rho}))_{\omega}\quad &\text{if }k=r_0s,\\
0 \quad &\text{if }r_0\ \text{does not divide }k;
\end{cases}\]
and
\[J_{N_{r-k,k}}(L([a,b]_{\rho})_{\omega})=\begin{cases} (L([a+s,b]_{\rho})\boxtimes L([a,a+s-1]_{\rho}))_{\omega}\quad &\text{if }k=r_0s,\\
0 \quad &\text{if }r_0\ \text{does not divide }k.
\end{cases}\]

\end{prop}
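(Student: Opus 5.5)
The plan is to follow the linear argument of Zelevinsky, with exponents taken up to $n$-equivalence, and to compute everything through the minimal Jacquet module. Write $\Pi:=(\rho\nu_\rho^a\times\dots\times\rho\nu_\rho^b)_\omega$ and let $M_0=G_{r_0}^{\times(b-a+1)}$ be its cuspidal Levi.

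\textbf{Step 1: vanishing when $r_0\nmid k$.} Apply the geometric lemma to $J_{N_{r-k,k}}(\Pi)$. It is glued from inductions of Jacquet modules of the cuspidal datum along double cosets $W_{M_{r-k,k}}\backslash W/W_{M_0}$. Since cuspidal representations have no proper Jacquet modules, only cosets preserving the blocks of size $r_0$ contribute. So $J_{N_{r-k,k}}(\Pi)=0$ unless $r_0\mid k$. The same then holds for the subrepresentation $Z([a,b]_\rho)_\omega$ and the quotient $L([a,b]_\rho)_\omega$, by exactness of Jacquet functors.

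\textbf{Step 2: the minimal Jacquet module of $Z$ and $L$.} I will show that the semisimplified minimal Jacquet module $J_{N_0}(Z([a,b]_\rho)_\omega)$ is irreducible, equal to $(\rho\nu_\rho^a\boxtimes\dots\boxtimes\rho\nu_\rho^b)_{\omega}$ in the standard order. Likewise $J_{N_0}(L([a,b]_\rho)_\omega)$ should be the reversed-order tensor.

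First, the $n$-equivalence classes $[\rho\nu_\rho^i]$ for $a\lest i\lest b$ are pairwise distinct. Indeed $\nu_\rho^{j}$ is a twist by a character of $X^*(F^\times/F^{\times n})$ only when it is trivial on $F^{\times n}$, i.e.\ when $n(\rho)\mid nj$ in the appropriate sense. Here I would use that $[\rho]=[\rho\nu_\rho^{j}]$ forces $\rho\times\rho\nu_\rho^{j}$ to be reducible-free for small $j$, together with the explicit value of $l(\rho)$. So the datum is regular, and by the geometric lemma $J_{N_0}(\Pi)$ is multiplicity free, with one term for each ordering of the exponents.

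Next, $J_{N_0}(Z)$ is nonempty and contains the identity-ordered term by Frobenius reciprocity, since $Z\into\Pi$. Let $w$ be a permutation with $w(i)>w(i+1)$ for some adjacent $i$. I claim the $w$-ordered term cannot occur in $J_{N_0}(Z)$. If it did, then passing to the Levi $G_{2r_0}$ at positions $i,i+1$ would produce, inside $Z$'s Jacquet module, a subquotient of the length-two representation $(\rho\nu_\rho^{c}\times\rho\nu_\rho^{c+1})_\omega$ whose exponent is in the reversed order. That reversed-order subquotient is the Langlands-type quotient, not the subrepresentation. This contradicts uniqueness of the subrepresentation in $\Pi$, by the standard Zelevinsky inductive argument: embed $Z\into Z([a,b-1])\times\rho\nu_\rho^b$ and use induction on $b-a$.

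\textbf{Step 3: the rank-one input.} The rank-one case is exactly the reducibility point $s=\pm l(\rho)/n$ defining $\nu_\rho$. There $(\rho\times\rho\nu_\rho)_\omega$ has length two, with the subrepresentation having only the standard-ordered exponent. This follows from the geometric lemma together with the multiplicity-one count in rank one.

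\textbf{Step 4: conclusion for $k=r_0 s$.} By transitivity of Jacquet modules, $J_{N_{r-k,k}}(Z)$ has minimal Jacquet module equal to a single irreducible term. Hence $J_{N_{r-k,k}}(Z)$ is irreducible. Its exponent identifies it with the irreducible $(Z([a,b-s])\boxtimes Z([b-s+1,b]))_\omega$: the latter has the same minimal exponent by Step 2 applied to the smaller segments, and irreducible representations of the Levi are determined by this data via the metaplectic tensor product bijection and Proposition~\ref{P:mot-ex}. Here I fix the compatible central characters consistently.

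\textbf{Step 5: the $L$ case.} For $L$ the same argument applies with the reversed order. Alternatively, one can take contragredients (Lemma~\ref{lem MTPcontragredient}) together with an Aubert-type involution, which interchanges $Z$ and $L$.

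\textbf{Main obstacle.} The delicate point is Step 2 in the cover setting. One must verify the regularity of the datum up to $n$-equivalence, so that the terms of $J_{N_0}(\Pi)$ are distinct and each has multiplicity one. This includes the fact that a metaplectic tensor of cuspidals with a fixed central character is irreducible. I would use the explicit form of $n(\rho)$ from the tame computation to handle this.
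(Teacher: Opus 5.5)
The paper gives no proof of this statement; it imports it from Kaplan--Lapid--Zou. Your outline is the standard Zelevinsky-type route: the geometric lemma for vanishing when $r_0\nmid k$, multiplicity-freeness of $J_{N_0}(\Pi)$, the rank-one input at the reducibility point, and transitivity of Jacquet functors to read off $J_{N_{r-k,k}}$. Steps 1 and 3 are fine, and Step 4 is fine once Step 2 holds.

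\textbf{The gap: Step 2.} The core of Step 2 is excluding non-identity orderings from $J_{N_0}(Z([a,b]_\rho)_\omega)$, and the mechanism you give does not do this. There are three separate problems.

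\emph{Descents need not be consecutive.} A descent $w(i)>w(i+1)$ can involve non-consecutive exponents. For $[0,2]_\rho$, the ordering $(\rho\nu_\rho^{2},\rho\nu_\rho^{0},\rho\nu_\rho^{1})$ has its only descent between $\rho\nu_\rho^{2}$ and $\rho\nu_\rho^{0}$. Their product is irreducible, so no length-two representation is involved.

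\emph{No contradiction arises.} Even for consecutive exponents, a piece $\cdots\otimes L([c,c+1]_\rho)\otimes\cdots$ in a partial Jacquet module of $Z$ contradicts nothing about the uniqueness of the irreducible subrepresentation of $\Pi$.

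\emph{One embedding is too weak.} The single embedding $Z\into Z([a,b-1]_\rho)\times\rho\nu_\rho^b$, plus induction, only confines $J_{N_0}(Z)$ to the $b-a+1$ orderings obtained by inserting $\rho\nu_\rho^b$ anywhere into $(\rho\nu_\rho^a,\dots,\rho\nu_\rho^{b-1})$. For instance, $(\rho\nu_\rho^{b},\rho\nu_\rho^{a},\dots,\rho\nu_\rho^{b-1})$ is not excluded.

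\textbf{The fix: use several embeddings at once.} The identity ordering occurs exactly once in $J_{N_0}(\Pi)$. By Frobenius reciprocity, $\Pi$ therefore has a unique irreducible subrepresentation $Z$, so $Z$ embeds into every nonzero subrepresentation of $\Pi$. For each $a\lest c<b$, exactness of induction makes $\rho\nu_\rho^a\times\dots\times Z([c,c+1]_\rho)\times\dots\times\rho\nu_\rho^b$ a subrepresentation of $\Pi$. For KP-covers this also uses the metaplectic tensor product, with a compatible central character on the $G_{2r_0}$-factor. By the geometric lemma and Step 3, every term of its minimal Jacquet module has $\rho\nu_\rho^c$ before $\rho\nu_\rho^{c+1}$. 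Intersecting over $c$ leaves only the identity ordering. Equivalently, for $b-a\gest 2$, intersect the shuffle sets coming from $Z\into Z([a,b-1]_\rho)\times\rho\nu_\rho^b$ and $Z\into\rho\nu_\rho^a\times Z([a+1,b]_\rho)$.

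\textbf{Starting point for $L$.} You also need a starting point for $L$ that you did not supply. Since $L$ is a quotient, ordinary Frobenius reciprocity does not put the reversed ordering into $J_{N_0}(L)$. Use second adjointness, or use $L^\vee\into\Pi^\vee$ together with $J_N(\pi^\vee)\simeq J_{\bar N}(\pi)^\vee$ and conjugation by the longest block permutation.

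\textbf{Your ``main obstacle''.} The regularity point you single out is easy, and your justification for it is not meaningful: for $j\neq0$, $\nu_\rho^j$ is never a character of $G_{r_0}/G_{r_0}^{(n)}$. The correct argument is as follows. Suppose $\rho\nu_\rho^{j}\simeq\rho\chi$ with $\chi$ of finite order. Compare central characters at a central element whose determinant has non-zero valuation. There $\chi$ gives a root of unity, while $\nu_\rho^{j}$ gives a positive real number different from $1$, a contradiction. No tame computation of $n(\rho)$ is needed.
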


\subsection{Calculating derivatives for standard modules $Z(\Delta)$ and $L(\Delta)$}

In this subsection, we calculate the derivatives of $Z(\Delta)$ and $L(\Delta)$, where 
$\Delta=[a,b]_{\rho}$ is a segment. To that end, we need to assume the tame condition $\gcd(n,p)=1$ as well.

Before confining to the more specific case, we first show the following general result:

\begin{prop}\label{prop WhdimBZprod}
	
Let $r=r_1+\dots+r_k$ and $\rho_i\in\Irr_{\epsilon}(\ol{G_{r_i}})$. Then
\[
\dim \Wh ((\rho_1\times\dots\times\rho_k)_{\omega}) =
\begin{cases}
\prod_{i=1}^{k}\dim(\Wh(\rho_i))	\quad& \text{for an S-cover},\\
\big(\prod_{i=1}^{k}\dim(\Wh(\rho_i))d_{r_i}\big)/d_r\quad& \text{for a KP-cover}.
\end{cases}\]
\end{prop}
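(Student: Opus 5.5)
Assume first $k=2$ and argue by induction on $k$. The general case then follows from Proposition \ref{prop BZPasso}, or more precisely from its MTP counterpart, the associativity of the usual BZ product; in the KP case this is the associativity from \cite{kaplan2022classification}. Note that the KP formula telescopes correctly: $(d_{r_1}d_{r_2}/d_{r_1+r_2})\cdot d_{r_1+r_2}d_{r_3}/d_r = d_{r_1}d_{r_2}d_{r_3}/d_r$. So it suffices to treat $k=2$, or to run the argument below directly for general $k$, since nothing in it is specific to two factors.

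The plan is to pass through the extended product. Whittaker functionals are exact, so the standard geometric argument (Rodier's heredity) applies verbatim to the cover. Only the open Bruhat cell $P\,w_0\,U_r$ contributes to $\Hom_{U_r}(i_{\ol P}^{\ol{G_r}}\sigma,\psi_r)$, which gives $\dim\Wh(i_{\ol P}^{\ol{G_r}}\sigma)=\dim\Wh(\sigma)$ for any finite length $\sigma\in\Rep(\ol M)$ with its standard Whittaker pair. Equivalently, iterate Corollary \ref{cor productderivative} with $l=r$: only the terms $\rho_1^{(r_1)}\ol\times\dots\ol\times\rho_k^{(r_k)}$ survive at the top derivative, because $l_i\le r_i$. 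Applying this to $\sigma=(\rho_1\boxtimes\dots\boxtimes\rho_k)_\omega$ reduces the claim to computing $\dim\Wh((\rho_1\boxtimes\dots\boxtimes\rho_k)_\omega)$.

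\textbf{S-case.} The tensor product is the honest one, since the blocks commute. Moreover $U_M=\prod U_{r_i}$ with $\psi_M=\boxtimes\psi_{r_i}$, so the Whittaker space is the tensor product of the Whittaker spaces and the dimensions multiply.

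\textbf{KP-case.} Use \eqref{eq WhitdimBD}: with $\rho$ any irreducible summand of $\rho_1\ol\boxtimes\dots\ol\boxtimes\rho_k$, we have $m\dim\Wh(\rho)=\prod_i m_i\dim\Wh(\rho_i)$. By Corollary \ref{cor MTP} and Proposition \ref{P:mot-ex}, the MTP $(\rho_1\boxtimes\dots\boxtimes\rho_k)_\omega$ is one such summand, and all summands differ by character twists, so they share the same Whittaker dimension. It then remains to insert the multiplicities from \cite{zou2022metaplectic}*{Proposition 2.3}, generalizing \eqref{eq tensormult}. These are $m_i=n^2$, and $m$ is the total multiplicity of the EMTP for the Levi of type $(r_1,\dots,r_k)$, namely $m=n^{2k}d_r/\prod_i d_{r_i}$. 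This gives $\dim\Wh(\rho)=\prod_i\dim\Wh(\rho_i)\,d_{r_i}/d_r$. If the explicit $k$-fold multiplicity is not available in the literature, I would instead do $k=2$, where \eqref{eq tensormult} gives exactly $m_1m_2/m=d_{r_1}d_{r_2}/d_r$, and then induct as described at the start.

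\textbf{Main obstacle.} The delicate step is the multiplicity bookkeeping in the KP case. It needs $m_i$ and $m$ computed uniformly in the tame setting, together with the verification that every irreducible summand of the EMTP, and hence the chosen MTP, has the same Whittaker dimension. That uniformity follows because twisting by $\chi\in X^*(M/M_\beta^{[n]})$ does not change $\dim\Wh$, since $U_M\subset M_\beta^{[n]}$.
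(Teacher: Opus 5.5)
Your proposal is correct and follows essentially the paper's route: heredity of Whittaker models (the paper cites \cite{banks1998heredity} for covers) reduces the claim to $\dim\Wh((\rho_1\boxtimes\dots\boxtimes\rho_k)_\omega)$, the S-case is immediate from the honest tensor product, and the KP-case takes $\rho=(\rho_1\boxtimes\dots\boxtimes\rho_k)_\omega$ in \eqref{eq WhitdimBD} with $(\prod_i m_i)/m=(\prod_i d_{r_i})/d_r$. The paper takes this multiplicity ratio directly for general $k$ from \cite{zou2022metaplectic}*{Proposition 2.3}, so your fallback induction on $k$ via associativity is not needed.
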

\begin{proof}

First, we have that
\[\dim\Wh((\rho_1\times\dots\times\rho_k)_{\omega})=\dim\Wh((\rho_1\boxtimes\dots\boxtimes\rho_k)_{\omega}),\]
which follows from the heredity of Whittaker models in \cite{banks1998heredity}. If we consider the S-cover, then from the construction of tensor product we have
\[\dim\Wh((\rho_1\boxtimes\dots\boxtimes\rho_k)_{\omega})=\prod_{i=1}^{k}\dim(\Wh(\rho_i)).\]
If we consider the KP-cover, then by taking $\rho=(\rho_1\boxtimes\dots\boxtimes\rho_k)_{\omega}$ in \eqref{eq WhitdimBD} and using the equality  
\[(\prod_{i=1}^km_i)/m=(\prod_{i=1}^kd_{r_i})/d_r\] deduced from \cite{zou2022metaplectic}*{Proposition 2.3}, we finish the proof as well.

\end{proof}

We recall the following result on Whittaker dimensions of $Z(\Delta)$ and $L(\Delta)$ under the assumption $\gcd(p,n)=1$. In particular, Whittaker dimension is independent of the representative in an $n$-equivalence class.

\begin{prop}[\cite{zou2025gelfand}*{Section 10}]\label{prop WhittakerZL}
Assume $\gcd(n,p)=1$. Then we have
\[
\dim \Wh (Z(\Delta))=
\begin{cases} 
\binom{n(\rho)}{b-a+1} &\quad\text{for an S-cover},\\
\binom{n(\rho)}{b-a+1}/d_r &\quad\text{for a KP-cover};
\end{cases}\]
and
\[
\dim \Wh (L(\Delta))=
\begin{cases} 
\binom{n(\rho)+b-a}{b-a+1} &\quad\text{for an S-cover},\\
\binom{n(\rho)+b-a}{b-a+1}/d_r &\quad\text{for a KP-cover}.
\end{cases}\]

\end{prop}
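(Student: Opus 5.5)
The plan is to reduce the block of $\rho$ to an Iwahori-type block, where the Whittaker space becomes a permutation module for the symmetric group $\mfr{S}_\ell$, $\ell:=b-a+1$. The two formulas then become orbit counts: $\binom{n(\rho)}{\ell}$ is the number of $\ell$-subsets of an $n(\rho)$-element set, and $\binom{n(\rho)+\ell-1}{\ell}$ is the number of $\ell$-multisets. We work throughout with $\gcd(n,p)=1$.

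\textbf{Step 1: the cuspidal case $\ell=1$.} We establish $\dim\Wh(\rho)=n(\rho)$ for S-covers and $n(\rho)/d_{r_0}$ for KP-covers. We plan to get this from the type-theoretic description of $\rho$ in \cite{zou2025simple}: $\rho$ is compactly induced from an extended maximal simple type. Mackey theory then computes the Whittaker functionals as a count of the genuine characters of the relevant torus part. These are indexed by $\mu_{n}$-cosets, and after quotienting by the stabilizer that produces $l(\rho)$ there remain $n/l(\rho)=n(\rho)$ of them. For KP-covers, the extra factor $1/d_{r_0}$ comes from passing from $\ol{G_{r_0}^{(n)}}$ to $\ol{G_{r_0}}$, as in \eqref{eq WhitdimBD}.

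\textbf{Step 2: reduction to a Hecke algebra.} For general $\ell$, we use the covering analogue of Bushnell--Kutzko theory to identify the Bernstein block containing $(\rho\nu_\rho^a\times\dots\times\rho\nu_\rho^b)_\omega$ with modules over an affine Hecke algebra of type $A_{\ell-1}$. Here the lattice part is the covering lattice $Y_{Q,n}$ attached to $n(\rho)$, so the torus of the associated ``dual'' group is $n(\rho)$-fold enlarged. Under this equivalence, the Gelfand--Graev representation $\ind_{U_r}^{\ol{G_r}}\psi_r$, projected to the block, corresponds to the module induced from the sign character of the finite Hecke algebra $\HH(\mfr{S}_\ell)$, tensored with a permutation lattice module. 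This is the same mechanism used for Iwahori-spherical representations in \cite{GW}. The output is a formula of the following shape: for $\pi$ in the block with Jacquet module concentrated at a regular-type exponent, $\dim\Wh(\pi)$ equals the multiplicity of a specified $\mfr{S}_\ell$-isotypic component in the permutation representation of $\mfr{S}_\ell$ on $(\Z/n(\rho))^\ell$. For KP-covers this multiplicity is divided by $d_r$, using Proposition \ref{prop WhdimBZprod} and \eqref{eq tensormult}.

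\textbf{Step 3: orbit counts.} Proposition \ref{prop JacquetZL} shows that $Z(\Delta)$ (respectively $L(\Delta)$) is the unique constituent whose Jacquet module is the single ``decreasing'' (respectively ``increasing'') exponent. So $Z(\Delta)$ corresponds to the sign-type piece and $L(\Delta)$ to the trivial-type piece of the permutation module. The sign component of $\C[(\Z/n(\rho))^\ell]$ has dimension equal to the number of $\mfr{S}_\ell$-orbits with trivial stabilizer, i.e. the $\ell$-subsets, giving $\binom{n(\rho)}{\ell}$. The trivial component counts all orbits, i.e. the multisets, giving $\binom{n(\rho)+\ell-1}{\ell}$.

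\textbf{Consistency check.} As a cross-check that does not depend on the Hecke-algebra step, we will verify that the resulting numbers satisfy the Koszul identity $\sum_j(-1)^j\binom{n+j-1}{j}\binom{n}{\ell-j}=0$. This identity must hold because of the Grothendieck-group relation $\sum_j(-1)^j L([a,a+j-1]_\rho)\times Z([a+j,b]_\rho)=0$, together with additivity of $\Wh$ and Proposition \ref{prop WhdimBZprod}. Since Step 1 fixes the base case, this relation alone already determines the $L(\Delta)$ values from the $Z(\Delta)$ values, or vice versa.

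\textbf{Main obstacle.} I expect Step 2 to be the hardest: transporting the Gelfand--Graev module through the type equivalence for a cover, and identifying the correct permutation lattice $(\Z/n(\rho))^\ell$. I would first handle the depth-zero case, where the statement reduces to the Iwahori computation of \cite{GW}, and then extend to general $\rho$ via the simple-type theory of \cite{zou2025simple}.
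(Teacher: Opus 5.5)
The paper gives no argument for this proposition; it is quoted from \cite{zou2025gelfand}*{Section 10}. Your outline is a sensible route. With $N=n(\rho)$ and $\C[(\Z/N)^\ell]\cong(\C^N)^{\otimes\ell}$ as $\mfr{S}_\ell$-modules, your two counts are $\dim\Lambda^\ell\C^N$ and $\dim\mathrm{Sym}^\ell\C^N$. Your consistency check is also right: it is $\sum_j(-1)^jh_je_{\ell-j}=0$ specialized at $N$ ones, and, granted the Grothendieck relation, it lets one family determine the other via Proposition \ref{prop WhdimBZprod}. But as a proof there is a genuine gap: Step 2 is the entire content of the statement, and it is only asserted.

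Nothing else in your argument pins down the $\mfr{S}_\ell$-structure of the finite part $P$ of the block component of the Gelfand--Graev module. Step 1 with Proposition \ref{prop WhdimBZprod} gives only $\dim P$, the Whittaker dimension of the full induced representation ($n(\rho)^\ell$, resp.\ $n(\rho)^\ell/d_r$). The Grothendieck relation only trades the $Z$'s for the $L$'s. Yet the answer depends on exactly which stabilizer types occur in $P$ (free orbits versus all orbits). The missing ingredient is an actual transport of $\ind_{U_r}^{\ol{G_r}}\psi_r$ through the covering type equivalence of \cite{zou2025simple}. That transport must identify $P$, as a module over the finite Hecke algebra, with a deformation of the permutation module on $(\Z/n(\rho))^\ell$ (or a quotient of it, see below).

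Step 1 is not independent either. For $k=1$ the identity \eqref{eq WhitdimBD} is vacuous, since it only compares $\rho$ with its twists $\rho\chi$. So it cannot produce the factor $1/d_{r_0}$, and the Mackey count is only sketched. The cuspidal case is really the $\ell=1$ instance of the same Gelfand--Graev computation.

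Two further points need repair. First, for KP covers with $d_r>1$ the set cannot be $(\Z/n(\rho))^\ell$ itself, because $n(\rho)^\ell$ exceeds the total $n(\rho)^\ell/d_r$. Proposition \ref{prop WhdimBZprod} with \eqref{eq tensormult} controls only that total, not a division by $d_r$ constituent by constituent. The natural candidate is the quotient of $(\Z/n(\rho))^\ell$ by the diagonal cyclic subgroup of order $d_r$. This makes sense because $d_r\mid n(\rho)$: $d_r\mid 2r\mbf{a}-r+1$ forces $\gcd(d_r,r)=1$, and $l(\rho)\mid r_0$. For that quotient the division is legitimate. A diagonal translation of order $e>1$ with $e\mid d_r$ preserving an $\ell$-subset or $\ell$-multiset of $\Z/n(\rho)$ would force $e\mid\ell$, contradicting $\gcd(d_r,\ell)=1$. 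So the group acts freely on both, and the sign- and trivial-isotypic multiplicities become $\binom{n(\rho)}{\ell}/d_r$ and $\binom{n(\rho)+\ell-1}{\ell}/d_r$.

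Second, by Proposition \ref{prop JacquetZL} the minimal Jacquet exponent of $Z(\Delta)$ is $\rho\nu_\rho^a\otimes\cdots\otimes\rho\nu_\rho^b$, the increasing one, and that of $L(\Delta)$ is the decreasing one. So your labels in Step 3 are swapped. This is exactly where it is decided whether $Z(\Delta)$ sees the sign- or the trivial-isotypic part of $P$. It must therefore be matched with the normalization of the Hecke algebra presentation. The case $n(\rho)=1$ (e.g.\ $n=1$), where $Z(\Delta)$ is non-generic for $\ell\ge 2$, fixes the convention.
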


Now we discuss derivatives.

\begin{prop} \label{prop: der_of_ZLDelta}
Assume $\gcd(n,p)=1$. Regarding the $k$-th derivatives, we have
\[
Z(\Delta)^{(k)} \simeq 
\begin{cases} 
\binom{n(\rho)}{s} \cdot Z([a,b-s]_{\rho})\quad &\text{if }k=r_0s\ \text{for an S-cover},\\
 \frac{d_{r-k}}{d_r}\binom{n(\rho)}{s} \cdot Z([a,b-s]_{\rho})\quad &\text{if }k=r_0s\ \text{for a KP-cover},\\
0 \quad &\text{if }r_0\ \text{does not divide }k;
\end{cases}\]
and 
\[
L(\Delta)^{(k)} \simeq 
\begin{cases} 
\binom{n(\rho)+s-1}{s} \cdot L([a+s,b]_{\rho})\quad &\text{if }k=r_0s\ \text{for an S-cover},\\
 \frac{d_{r-k}}{d_r}\binom{n(\rho)+s-1}{s} \cdot L([a+s,b]_{\rho})\quad &\text{if }k=r_0s\ \text{for a KP-cover},\\
0 \quad &\text{if }r_0\ \text{does not divide }k.
\end{cases}\]
Here  in the KP-cover case, the isomorphism above is only up to $n$-equivalence class.
\end{prop}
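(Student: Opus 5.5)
The plan is to combine the relation \eqref{E:r-BZ}, $(\cdot)^{(k)}=\partial^k\circ J_{N_{r-k,k}}$, with the Jacquet module computation of Proposition \ref{prop JacquetZL}, and then read off the multiplicity from the Whittaker dimensions in Proposition \ref{prop WhittakerZL}. We treat $Z(\Delta)$; the case of $L(\Delta)$ is identical after replacing $Z([a,b-s]_\rho)\boxtimes Z([b-s+1,b]_\rho)$ by $L([a+s,b]_\rho)\boxtimes L([a,a+s-1]_\rho)$.

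If $r_0\nmid k$, then Proposition \ref{prop JacquetZL} gives $J_{N_{r-k,k}}(Z(\Delta))=0$, so $Z(\Delta)^{(k)}=0$.

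Now let $k=r_0s$. The Jacquet module is the irreducible (metaplectic) tensor product $\tau:=(Z([a,b-s]_\rho)\boxtimes Z([b-s+1,b]_\rho))_\omega$, and we need $\partial^k(\tau)$.

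For an S-cover this is an honest tensor product $\pi_1\boxtimes\pi_2$. Since $\partial^k$ is the top derivative in the second variable only, we get
\[
\partial^k(\pi_1\boxtimes\pi_2)\simeq \pi_1\otimes \pi_2^{(k)}=\dim\Wh(\pi_2)\cdot \pi_1 .
\]
Here $\pi_2^{(k)}$ is the Whittaker space of $\pi_2$, and Proposition \ref{prop WhittakerZL} gives $\dim\Wh(Z([b-s+1,b]_\rho))=\binom{n(\rho)}{s}$. The multiplicity comes out as $\binom{n(\rho)}{s}$, as claimed.

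For a KP-cover, I would use the special-case computation of \S\ref{subsubsection derivativeBD}. Its proposition applies because $J_{N_{r-k,k}}(Z(\Delta))$ is irreducible and is a constituent of $\pi_1\ol\boxtimes\pi_2$. It shows that $Z(\Delta)^{(k)}$ is a sum of twists $\pi_1\chi_1$ with $\chi_1\in X^*(G_{r-k}/G_{r-k}^{(n)})$, all lying in the $n$-equivalence class of $Z([a,b-s]_\rho)$, with total multiplicity
\[
\frac{m_1m_2}{m}\dim\Wh(\pi_2)=\frac{d_kd_{r-k}}{d_r}\dim\Wh(\pi_2)
\]
by \eqref{eq tensormult}. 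Proposition \ref{prop WhittakerZL} for the KP-cover gives $\dim\Wh(\pi_2)=\binom{n(\rho)}{s}/d_k$, and the $d_k$ cancels to yield $\frac{d_{r-k}}{d_r}\binom{n(\rho)}{s}$.

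This multiplicity computation is the step needing most care. One must check that \S\ref{subsubsection derivativeBD} counts multiplicity in the right normalization, namely the actual irreducible summand rather than the whole $n$-class, so that the statement holds only up to $n$-equivalence as stated. One must also check that $m_1=m_2=n^2$ and $m=n^4d_r/(d_{r-k}d_k)$ are applied with the correct indices. If the bookkeeping there is doubtful, the fallback is to compare total Whittaker dimensions. Using Proposition \ref{prop WhittakerZL} for $Z(\Delta)$ and for $Z([a,b-s]_\rho)$, together with the transitivity of derivatives, the $r$-th derivative computed through the $k$-th must give the same dimension. Requiring
\[
\dim\Wh(Z(\Delta))=c\cdot\dim\Wh(Z([a,b-s]_\rho))
\]
with $c$ the unknown multiplicity pins down $c$. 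This transitivity identity requires the binomial identity to be consistent, and checking that consistency serves as a sanity check on the formula.
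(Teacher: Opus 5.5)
Your main argument is correct and is essentially the paper's own proof. You write $(\cdot)^{(k)}=\partial^k\circ J_{N_{r-k,k}}$ and take the Jacquet module from Proposition~\ref{prop JacquetZL}. For S-covers you then use $\partial^k(\pi_1\boxtimes\pi_2)=\dim\Wh(\pi_2)\cdot\pi_1$. For KP-covers you use the computation of \S\ref{subsubsection derivativeBD} with \eqref{eq tensormult} and $\dim\Wh(\pi_2)=\binom{n(\rho)}{s}/d_k$, and the $d_k$ cancels exactly as you say.

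Drop the proposed fallback, however, because it is wrong. Derivatives do not compose to the Whittaker functional: $(\pi^{(k)})^{(r-k)}$ has dimension $\dim\Wh(J_{N_{r-k,k}}(\pi))$, i.e.\ it computes the $(r-k,k)$-semi-Whittaker model rather than $\Wh(\pi)$. Correspondingly, the identity $\binom{n(\rho)}{l}=\binom{n(\rho)}{s}\binom{n(\rho)}{l-s}$ (with $l=b-a+1$) that the fallback would require already fails for S-covers, e.g.\ for $n(\rho)=l=2$, $s=1$.
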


\begin{proof}

Recall that we have $(\cdot)^{(k)}=\partial^k\circ J_{N_{r-k,k}}$. First, in the  S-cover case, for $\pi_1\in\Rep_{\epsilon}(\ol{G_{r-k}})$ and $\pi_2\in\Rep_{\epsilon}(\ol{G_{r-k}})$, we have $\partial^k(\pi_1\boxtimes\pi_2)=\dim(\Wh(\pi_2))\cdot\pi_1$. The result follows from Propositions \ref{prop JacquetZL} and \ref{prop WhittakerZL}. For KP-covers, by using the calculation in \S \ref{subsubsection derivativeBD}, formula \eqref{eq tensormult}, Propostions \ref{prop JacquetZL} and \ref{prop WhittakerZL}, we also get the desired result.

\end{proof}

\begin{cor}\label{cor derivativeZ}
The highest derivative of $Z(\Delta)$ is of degree $\min(n(\rho)r_0,(b-a+1)r_0)$, and is isomorphic to 
\[\begin{cases}
Z([a,b-n(\rho)])_{\rho}\quad&\text{if }b-a+1\gest n(\rho),\\
\binom{n(\rho)}{b-a+1}\cdot\epsilon \quad&\text{if }b-a+1\lest n(\rho)\ \text{for an S-cover},\\
\binom{n(\rho)}{b-a+1}/d_r\cdot\epsilon \quad&\text{if }b-a+1\lest n(\rho)\ \text{for an KP-cover}.
\end{cases}\]
Again, in the KP-cover case, the isomorphism above is only up to $n$-equivalence class.
\end{cor}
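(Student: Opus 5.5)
This corollary should be read off directly from Proposition \ref{prop: der_of_ZLDelta}. Write $\Delta=[a,b]_{\rho}$ with $\rho\in\Cusp_{\epsilon}(\ol{G_{r_0}})$ and $r=r_0(b-a+1)$. That proposition shows $Z(\Delta)^{(k)}=0$ unless $k=r_0s$ with $0\lest s\lest b-a+1$. For $k=r_0s$, the derivative is a multiple of $Z([a,b-s]_{\rho})$ with multiplicity $\binom{n(\rho)}{s}$ in the S-cover case and $\frac{d_{r-k}}{d_r}\binom{n(\rho)}{s}$ in the KP-cover case. Since $Z([a,b-s]_\rho)$ is a nonzero irreducible representation for $s\lest b-a+1$ (or the trivial $\epsilon$ of $\mu_n$ when $s=b-a+1$), $Z(\Delta)^{(r_0s)}\neq 0$ exactly when the coefficient is nonzero. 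The factor $d_{r-k}/d_r$ is positive, so the only question is whether $\binom{n(\rho)}{s}\neq 0$, i.e.\ whether $s\lest n(\rho)$.

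The highest derivative therefore occurs at $s=\min(n(\rho),b-a+1)$, giving degree $\min(n(\rho)r_0,(b-a+1)r_0)$. Next I would split into the two cases.

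If $b-a+1\gest n(\rho)$, then $s=n(\rho)$ and $\binom{n(\rho)}{n(\rho)}=1$. The derivative is then $Z([a,b-n(\rho)]_\rho)$, with multiplicity $1$ for S-covers and $d_{r-k}/d_r$ for KP-covers. In the KP case I would then have to reconcile this multiplicity with the stated answer, which has multiplicity one up to $n$-equivalence class. I expect this to be the only delicate point. The statement is only up to $n$-equivalence, so I would interpret the multiple $\frac{d_{r-k}}{d_r}\cdot Z(\cdot)$ as a sum of representatives of one $n$-equivalence class. I would also check via the count in \S\ref{subsubsection derivativeBD} whether $d_{r-k}/d_r$ is in fact $1$ or an integer absorbed by the class convention. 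If it is not, the corollary should be read as stating the class only.

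If $b-a+1\lest n(\rho)$, then $s=b-a+1$, $k=r$, and the derivative lands in $\Rep(\ol{G_0})=\Rep(\mu_n)$ as a multiple of $\epsilon$. The multiplicity is $\binom{n(\rho)}{b-a+1}$, times $d_0/d_r$ for KP-covers. Since $d_0=\gcd(n,1)=1$ by \eqref{eq drgcd}, this gives $\binom{n(\rho)}{b-a+1}/d_r$. This agrees with $\dim\Wh(Z(\Delta))$ from Proposition \ref{prop WhittakerZL}, as it must, since the $r$-th derivative is the Whittaker space. When $b-a+1=n(\rho)$ the two cases overlap and agree, because $Z([a,a-1]_\rho)$ is the trivial representation of $\mu_n$.
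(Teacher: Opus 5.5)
Your reduction to Proposition \ref{prop: der_of_ZLDelta} is the paper's route. The degree computation and the case $b-a+1\lest n(\rho)$, using $d_0=\gcd(n,1)=1$, are fine. But you leave open the one point that needs an argument. In the KP case with $b-a+1\gest n(\rho)$ you obtain multiplicity $d_{r-k}/d_r$ with $k=n(\rho)r_0$, and you do not show it equals $1$.

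Your fallback reading is also not acceptable. ``Up to $n$-equivalence class'' only means that the representative, equivalently the compatible central character $\omega$, is left unspecified. Multiplicities are still tracked in the paper's notation: $k\cdot Z(\mfr m)$ denotes a $k$-isotypic sum. The multiplicity-one claim is therefore real content, and the same computation is what makes $c_{\mfr m}=1$ in the proof of Theorem \ref{thm highestderivative}(3).

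The missing idea is that for a KP-cover $l(\rho)$ divides $r_0$. This is the remark after the definition of $n(\rho)$: $l(\rho)$ is the size of the cuspidal support of the metaplectic lift of $\rho$. Hence $k=n(\rho)r_0=n\cdot(r_0/l(\rho))$ is a multiple of $n$, and
\[2(r-k)\mbf a-(r-k)+1=(2r\mbf a-r+1)-k(2\mbf a-1)\equiv 2r\mbf a-r+1 \pmod n.\]
So $d_{r-k}=d_r$ by \eqref{eq drgcd}, and the multiplicity is exactly one.

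The same fact is needed for your claim that the two cases agree when $b-a+1=n(\rho)$. There $r=n(\rho)r_0$ is divisible by $n$, so $d_r=\gcd(n,1)=1$. That is what makes $\binom{n(\rho)}{n(\rho)}/d_r\cdot\epsilon$ equal to $\epsilon=Z([a,a-1]_\rho)$. Without it, your overlap check is unjustified in the KP case.
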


\begin{proof}
In view of Proposition \ref{prop: der_of_ZLDelta}, it suffices to check the following: for a KP-cover, if $b-a+1>n(\rho)$, then $d_r=d_{r-n(\rho)r_0}$. However, this follows easily from the fact that $n$ divides $n(\rho)r_0=nr_0/l(\rho)$ (note that, for a KP-cover, we have $l(\rho)$ divides $r_0$) and the definition of $d_r$.
\end{proof}

\begin{cor}\label{cor generalizedWhittseg}
Let $\lambda=(r_1,\dots,r_k)$ be a composition of $r$. Then the $\lambda$-semi-Whittaker model of $Z(\Delta)\in \Irr_{\epsilon}(\ol{G_r})$ is non-zero if and only if each $r_i$ is divisible by $r_0$ and smaller than and equal to $n(\rho)r_0$.

\end{cor}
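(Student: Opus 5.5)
The plan is to reduce the $\lambda$-semi-Whittaker space to iterated derivatives. By \eqref{eq generalizedWhit} we have $\Wh_\lambda(Z(\Delta))\simeq \Wh(J_{U_\lambda}(Z(\Delta)))$. Here $J_{U_\lambda}$ is the Jacquet module along the standard parabolic with Levi $M_\lambda=G_{r_1}\times\dots\times G_{r_k}$. Rather than compute the full Jacquet module, we peel off one block at a time from the right.

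Taking the Whittaker functional on the last block $G_{r_k}$ amounts to applying $\partial^{r_k}\circ J_{N_{r-r_k,r_k}}$. By \eqref{E:r-BZ} this composite is the derivative $(\cdot)^{(r_k)}$. So we have
\[\Wh_{(r_1,\dots,r_k)}(\pi)\simeq \Wh_{(r_1,\dots,r_{k-1})}(\pi^{(r_k)}),\]
as vector spaces up to the obvious identifications. This follows from transitivity of Jacquet modules (Whittaker restriction on the last block commutes with the remaining unipotent integrations) together with exactness of $\Wh$. Iterating, $\Wh_\lambda(Z(\Delta))\neq 0$ if and only if the iterated derivative
\[\big(\cdots(Z(\Delta)^{(r_k)})^{(r_{k-1})}\cdots\big)^{(r_1)}\]
is nonzero. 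At the final step $r_1$ equals the full rank remaining, so this last derivative is the Whittaker space.

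Now apply Proposition \ref{prop: der_of_ZLDelta} repeatedly. Write $\Delta=[a,b]_\rho$ with $r=r_0(b-a+1)$.
\begin{itemize}
\item $Z(\Delta)^{(r_k)}$ vanishes unless $r_0\mid r_k$. If $r_k=r_0s$, it is a nonzero multiple of $Z([a,b-s]_\rho)$ precisely when $\binom{n(\rho)}{s}\neq 0$, i.e. $s\le n(\rho)$.
\item The coefficient $d_{r-k}/d_r$ appearing in the KP case is a positive rational number. Since we only care about vanishing, it is harmless.
\item Each step replaces the segment by a shorter one with the same $\rho$. Induction on $k$ then shows that the iterated derivative is nonzero iff $r_0\mid r_i$ and $r_i/r_0\le n(\rho)$ for every $i$. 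The empty-segment case $[a,a-1]_\rho$ is the trivial representation of $\ol{G_0}=\mu_n$, i.e. $\epsilon$, which is nonzero.
\end{itemize}

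In the KP case the derivatives are only determined up to $n$-equivalence class. This does not affect the argument, because vanishing and Whittaker dimension are invariant under twisting by characters of $G/G^{(n)}$ (Proposition \ref{prop WhittakerZL}).

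The main obstacle is justifying the identity $\Wh_\lambda(\pi)\simeq\Wh_{\lambda'}(\pi^{(r_k)})$ in the covering setting. This requires checking that $J_{U_\lambda}$ factors as $J_{N_{r-r_k,r_k}}$ followed by the Jacquet functor of $M_{r-r_k,r_k}$ along the parabolic with Levi $M_{(r_1,\dots,r_{k-1})}\times G_{r_k}$. One must also check that the Whittaker character on the $G_{r_k}$-block can be taken first. Both are statements about unipotent groups, which split canonically, so the linear proof from \cite{bernstein1977induced} should carry over verbatim.

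One should also be careful about the ordering convention for $\lambda$. Since the criterion is symmetric in the $r_i$, any ordering works: semi-Whittaker pairs for permuted compositions are conjugate. Alternatively, one can peel from whichever end matches the derivative convention.
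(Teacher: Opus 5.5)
Your argument is correct, but it is organized differently from the paper's proof.

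\textbf{The paper's route.} The paper treats the full Levi $M_\lambda$ in one step. By \eqref{eq generalizedWhit}, $\Wh_\lambda(Z(\Delta))\simeq\Wh(J_{U_\lambda}(Z(\Delta)))$. Proposition~\ref{prop JacquetZL}, applied iteratively, shows that $J_{U_\lambda}(Z(\Delta))$ vanishes unless $r_0\mid r_i$ for every $i$. Otherwise it is the (metaplectic) tensor product of the representations attached to the consecutive subsegments of $\Delta$ of lengths $s_i=r_i/r_0$. Formula \eqref{eq WhitdimBD} then makes its Whittaker dimension a positive multiple of $\prod_i\dim\Wh(Z(\Delta_i))$. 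Finally, Proposition~\ref{prop WhittakerZL} shows that each factor is nonzero iff $s_i\lest n(\rho)$.

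\textbf{Your route.} You peel off one block at a time using \eqref{E:r-BZ} and transitivity of twisted coinvariants. All the two-block bookkeeping is then already packaged in Proposition~\ref{prop: der_of_ZLDelta}: the Jacquet module, the Whittaker dimension of the second factor, and the KP multiplicity ratio from \S\ref{subsubsection derivativeBD}.

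\textbf{Comparison.} Your route only ever needs maximal-parabolic Jacquet modules, which is exactly how Proposition~\ref{prop JacquetZL} is stated. It also needs no multi-factor Whittaker product formula. The paper's route avoids the identification $\Wh_{(r_1,\dots,r_k)}(\pi)\simeq\Wh_{(r_1,\dots,r_{k-1})}(\pi^{(r_k)})$. As you note, that identification is only a statement about canonically split unipotent groups. It is also the mechanism the paper itself uses later for Corollary~\ref{cor generalizedWhitt}. Both routes give the same dimension. For instance, in the KP case your factors $d_{r-k}/d_r$ telescope to $1/d_r$, times $\prod_i\binom{n(\rho)}{s_i}$, which matches the paper's count.

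\textbf{A small remark.} Drop your closing claim that semi-Whittaker pairs for permuted compositions are conjugate. It is not needed, because your right-to-left peeling works verbatim for an arbitrary composition. It is also not right if read literally for the pairs $(U_r,\psi_\lambda)$. Any conjugation preserving $U_r$ lies in $B_r$, and the orbits of $B_r$ on characters of $U_r$ remember which simple-root coordinates are nonzero.
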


\begin{proof}

It follows from Propositions \ref{prop JacquetZL}, \ref{prop WhittakerZL} and also the equalities in \eqref{eq generalizedWhit} and \eqref{eq WhitdimBD}.

\end{proof}

\section{Highest Bernstein--Zelevinsky derivatives}\label{section HighDegWhit}
In this section, we continue to take $\ol{G_r}$ to be a KP-cover or S-cover, and will define for each $\pi \in \Irr_\epsilon(\ol{G_r})$ a partition $\lambda_{\pi}$ of $r$, such that $\pi$ has non-zero $\lambda_\pi$-semi-Whittaker model. This will give a lower bound of the wavefront set of $\pi$, to be discussed in \S  \ref{section WFandLLC}.

\subsection{Main result on highest derivatives}

Given a smooth representation $\pi$ of $\ol{G_r}$, let $\pi_1=\pi^{(k_1)}, k_1 \gest 1$ be the highest derivative of $\pi$, as a  representation of $\ol{G_{r-k_1}}$. Assume that $\pi_{s}$ and $k_s$ are defined for some positive integer $s$. Let $\pi_{s+1}=\pi_s^{(k_{s+1})}$ be the highest derivative of $\pi_{s}$ as a smooth representation of $\ol{G_{r-\sum_{i=1}^{s+1}k_i}}$, where $k_{s+1}$ is the related degree. Such a procedure stops at some integer $s_{\pi}$. Let 
$$\lambda_{\pi}:=(k_1,k_2,\dots,k_{s_{\pi}})$$
 be the composition of $r$ obtained in this way. If $\pi$ is of finite length, then so is each $\pi_i$.

Let $\mfr m=\Delta_1+\dots+\Delta_k$ be a multisegment with $\Delta_{i}=[0,l_i-1]_{\rho_i}$, $\rho_i\in\Cusp_{\epsilon}(\ol{G_{r_i^0}})$, $l_i\gest 1$ for each $1\lest i\lest k$.  Write $r_i=l_ir_i^0$ for each $i=1,\dots,k$ and $r=\sum_{i=1}^k r_i=\sum_{i=1}^kl_ir_i^0$. Write 
\[k_{\mfr m}:=\sum_{i=1}^kr_i^0 \cdot\min(l_i,n(\rho_i)).\]

\begin{thm}\label{thm highestderivative}

Let $\ol{G_r}$ be an S-cover or a KP-cover.
\begin{enumerate}
\item The highest derivative of $Z(\mfr m)$ is of degree
$k_1=k_{\mfr m}$.

\item If $\Delta_i$ does not precede $\Delta_j$ for $1\lest i<j\lest k$, then the socle of $\Pi(\mfr m)^{(k_1)}$ is a finite multiple of $Z(\mfr m^-)$, where
\[\mfr m^-=[0,l_1-1-\min(l_1,n(\rho_1))]_{\rho_1}+\dots+[0,l_k-1-\min(l_k,n(\rho_k))]_{\rho_k}.\]

\item If $l_i\gest n(\rho_i)$ for each  $i=1,\dots,k$, then $Z(\mfr m)^{(k_1)}$ is isomorphic to $Z(\mfr m^-)$.

\end{enumerate}

\end{thm}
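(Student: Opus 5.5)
We would follow the strategy of Zelevinsky's original argument for the highest derivative (\cite{zelevinsky1980induced}*{\S 8}), adapted to covers via the extended product $\ol\times$ and its Leibniz rule. The plan is to first bound and compute the derivatives of the standard module $\ol\Pi(\mfr m)=Z(\Delta_1)\ol\times\dots\ol\times Z(\Delta_k)$, and then transfer the result to the unique irreducible subrepresentation $Z(\mfr m)\subset\Pi(\mfr m)\subset \ol\Pi(\mfr m)$ (Proposition \ref{prop irredBZPandEBZP}).

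\emph{Step 1 (upper bound and the top term).} By Corollary \ref{cor productderivative}, a multiple of $\ol\Pi(\mfr m)^{(l)}$ is glued from multiples of $Z(\Delta_1)^{(l_1)}\ol\times\dots\ol\times Z(\Delta_k)^{(l_k)}$ with $\sum l_i=l$. Corollary \ref{cor derivativeZ} gives $Z(\Delta_i)^{(l_i)}=0$ for $l_i>r_i^0\min(l_i,n(\rho_i))$. Hence $\ol\Pi(\mfr m)^{(l)}=0$ for $l>k_{\mfr m}$, and $\ol\Pi(\mfr m)^{(k_{\mfr m})}$ is a nonzero multiple of $Z(\Delta_1^-)\ol\times\dots\ol\times Z(\Delta_k^-)$, where $\Delta_i^-=[0,l_i-1-\min(l_i,n(\rho_i))]_{\rho_i}$ (with the degenerate convention of the Remark after Corollary \ref{cor productderivative} when $\Delta_i^-=\emptyset$). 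The same holds for the summand $\Pi(\mfr m)$ with $\ol\Pi$ replaced by $\Pi$, up to a direct-summand argument that uses the compatibility of $\omega$'s. Since the ordering ``$\Delta_i$ does not precede $\Delta_j$'' for $i<j$ is inherited by the truncated segments $\Delta_i^-$ (all start at $0$, so truncation from the right preserves non-precedence), $\Pi(\mfr m^-)$ is again a standard module with unique irreducible subrepresentation $Z(\mfr m^-)$. This proves (2): the socle of $\Pi(\mfr m)^{(k_1)}$ is a multiple of $Z(\mfr m^-)$.

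\emph{Step 2 (the highest derivative of $Z(\mfr m)$ has degree exactly $k_{\mfr m}$).} Since derivatives are exact, $Z(\mfr m)^{(l)}\subset \Pi(\mfr m)^{(l)}$ vanishes for $l>k_{\mfr m}$. For the nonvanishing, we would use Proposition \ref{prop:BZ's prop4.13}(4) and Lemma \ref{lm:BZ's lemma4.7}(2) as in Zelevinsky's proof. Let $\omega=Z(\mfr m)\rest_{\ol{P_r}}$ and $\omega^{(l)}$ its highest derivative. Suppose $l<k_{\mfr m}$. Then an irreducible $\sigma\subset\omega^{(l)}$ has cuspidal support satisfying the constraint of Lemma \ref{lm:BZ's lemma4.7}(2) ($\sigma\nu$ has support inside that of $\mfr m$), while Step 1 together with the Leibniz rule forces $\sigma$ to be a subquotient of some $Z(\Delta_1)^{(l_1)}\ol\times\cdots$, whose supports are $\mfr m$ with right ends cut off. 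We compare exponents: since all $\Delta_i$ begin at $0$, a term with $l_i<r_i^0\min(l_i,n(\rho_i))$ keeps some element of the form $[\rho_i\nu_{\rho_i}^{c}]$ that cannot be matched after the shift $\nu$. The main delicacy here is that $\nu=\nu_{\rho}^{n(\rho)}$, so the shift is by $n(\rho_i)$ steps. This is exactly why the degree is $r_i^0\min(l_i,n(\rho_i))$, and why the contradiction argument must track the $n(\rho_i)$-step shift rather than a one-step shift. We expect this combinatorial support comparison, done uniformly for KP $n$-equivalence classes, to be the main obstacle.

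\emph{Step 3 (part (3)).} When $l_i\gest n(\rho_i)$ for all $i$, Corollary \ref{cor derivativeZ} gives $Z(\Delta_i)^{(r_i^0n(\rho_i))}=Z(\Delta_i^-)$ with multiplicity one, with no binomial factor and with $d_r=d_{r-k}$ in the KP case. Thus $\Pi(\mfr m)^{(k_1)}\simeq\Pi(\mfr m^-)$ exactly. Then $Z(\mfr m)^{(k_1)}$ is a nonzero subrepresentation of $\Pi(\mfr m^-)$, so it contains $Z(\mfr m^-)$. To get equality, we apply the same argument to $L$-type duals or use the contragredient pairing (Proposition \ref{prop derivativepairing}) with $Z(\mfr m)^\vee\nu$ embedded in a dual standard module. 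This realizes $Z(\mfr m)^{(k_1)}$ also as a quotient of a module with unique irreducible quotient $Z(\mfr m^-)$; combined with multiplicity one of $Z(\mfr m^-)$ in $\Pi(\mfr m^-)$, this gives $Z(\mfr m)^{(k_1)}\simeq Z(\mfr m^-)$.
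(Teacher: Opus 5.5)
\textbf{Verdict: there is a genuine gap, in Step 2.} Your Step 1 and Step 3 agree with the paper's argument. In Step 1 the Leibniz rule at the top degree gives a multiple of $\Pi(\mfr m^-)$, whose socle is $Z(\mfr m^-)$; non-precedence is preserved under right truncation by a direct check, though not for the reason you give. In Step 3 you take the socle from (2), get the cosocle via Proposition \ref{prop derivativepairing} as in Zelevinsky \S 8.1, and use $c_{\mfr m}=1$.

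\textbf{Step 2 fails.} Step 2 is the non-vanishing of $Z(\mfr m)^{(k_{\mfr m})}$. Part (1), the meaning of $k_1$ in (2), and the non-vanishing used in (3) all rest on it.

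Your combinatorial claim is false. The normalization $\Delta_i=[0,l_i-1]_{\rho_i}$ is relative to $\rho_i$, so segments on a common cuspidal line do \emph{not} start at the same point. The $\nu$-shift of the surviving part of one segment can therefore be matched by another segment. For instance, take $\mfr m=[0,0]_{\rho\nu}+[0,0]_{\rho}$, ordered as required, with $k_{\mfr m}=2r_0$. The Leibniz piece of degree $r_0$ obtained by differentiating only the first factor has support $\{[\rho]\}$. Its $\nu$-twist has support $\{[\rho\nu]\}$, which lies in the support of $\mfr m$, so Lemma \ref{lm:BZ's lemma4.7}(2) yields no contradiction.

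Worse, no refinement of this global comparison can succeed. When $n(\rho)=1$ (e.g.\ the linear case), $Z([0,1]_\rho)$ is a subrepresentation of $(\rho\times\rho\nu)_\omega$. By Corollary \ref{cor derivativeZ} its highest derivative has degree $r_0$. Yet $(\rho\times\rho\nu)_\omega$ has the same cuspidal support and the same Leibniz pieces as $(\rho\nu\times\rho)_\omega\supset Z(\mfr m)$. Every input you feed into Step 2 holds equally for $Z([0,1]_\rho)$, so it cannot force degree $2r_0$ for $Z(\mfr m)$. Note that Step 2 never uses the ordering of the segments, and the ordering is exactly what separates the two cases.

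\textbf{What is missing: homogeneity of $\ol\Pi(\mfr m)$.} This is Proposition \ref{prop homogeneous}(1). It requires an ordering satisfying the stronger hypothesis
\[
[1-\min(l_i,n(\rho_i))+n(\rho_i),n(\rho_i)]_{\rho_i'}\cap\bigcup_{j>i}\Delta_j=\emptyset,
\]
which can be arranged via $\leq_e$. There, Lemma \ref{lm:BZ's lemma4.7}(2) is applied \emph{locally}, to a subrepresentation $\sigma$ known to sit in a specific piece of a filtration. The argument runs as follows.

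One inducts on $k$ and uses the exact sequence of Proposition \ref{prop:BZ's prop4.13}(1) for $Z(\Delta_1)\ol\times\ol\Pi(\mfr m')$. Suppose $\sigma$ embedded into the quotient $Z(\Delta_1)\ol\times(\ol\Pi(\mfr m')|_{\ol{P_{r-r_1}}})$. Then the support of its highest derivative would contain $\rho_1'$, forcing $\rho_1'\nu$ into $\mfr m$, which the ordering forbids.

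One then applies $(\Phi^-)^{r_1^0}$ repeatedly, using Proposition \ref{prop:BZ's prop4.13}(3),(4). At each stage $i<\min(l_1,n(\rho_1))$ the bad quotient $Z([0,l_1-1-i]_{\rho_1})\ol\times(\ol\Pi(\mfr m')|_{\ol{P_{r-r_1}}})$ is excluded by the same device. This is where the $n(\rho_1)$-step shift you anticipated genuinely enters. The argument finishes with Proposition \ref{prop prodhomogeneous} and the induction hypothesis.

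Homogeneity gives $k_{Z(\mfr m)}=k_{\mfr m}$ at once, since $Z(\mfr m)\subset\Pi(\mfr m)\subset\ol\Pi(\mfr m)$. After that your Steps 1 and 3 complete (2) and (3).
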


For $Z(\mfr{m})$, we write
\begin{equation} \label{D:lambda}
\lambda_{\mfr m}:=\lambda_{Z(\mfr{m})}=(k_1, k_2,\dots, k_{s_{\mfr m}})
\end{equation}
for the composition associated with $Z(\mfr m)$. Using the above theorem and \eqref{E:r-BZ}, we have the following result.

\begin{cor}\label{cor generalizedWhitt}

\begin{enumerate}

\item $\lambda_{\mfr m}$ is a partition of $r$ with $k_1\gest k_2\gest \dots \gest k_{s_{\mfr m}}$.

\item The representation $Z(\mfr m)$ has a non-zero $\lambda_\mfr{m}$-semi-Whittaker model.

\item If moreover each $l_i$ is divisible by $n(\rho_i)$, then the space of $\lambda_\mfr{m}$-semi-Whittaker models of $Z(\mfr m)$ is of dimension one.

\end{enumerate}

\end{cor}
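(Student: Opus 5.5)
The plan is to derive all three statements of Corollary \ref{cor generalizedWhitt} from Theorem \ref{thm highestderivative} by iterating it, together with the identity \eqref{E:r-BZ} and \eqref{eq generalizedWhit}. First I reduce to multisegments of the shape used in the theorem. After twisting each $\rho_i$ by a suitable power of $\nu_{\rho_i}$ (relabelling cuspidals), every segment of $\mfr m$ can be written as $[0,l_i-1]_{\rho_i}$. By Theorem \ref{thm highestderivative}(1), $k_1=k_{\mfr m}=\sum_i r_i^0\min(l_i,n(\rho_i))$. By part (2), the socle of $\Pi(\mfr m)^{(k_1)}$ is a multiple of $Z(\mfr m^-)$. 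Since $Z(\mfr m)\into\Pi(\mfr m)$ and derivatives are exact, the highest derivative $\pi_1=Z(\mfr m)^{(k_1)}$ is a nonzero subrepresentation of $\Pi(\mfr m)^{(k_1)}$. Hence every irreducible subrepresentation of $\pi_1$ is $Z(\mfr m^-)$, at least up to $n$-equivalence and twist.

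The key step is to show that the highest derivative of $\pi_1$ has degree $k_{\mfr m^-}$, and then to induct. Any irreducible subquotient $\sigma$ of $\pi_1$ is a subquotient of $\Pi(\mfr m)^{(k_1)}$. By Corollary \ref{cor productderivative} and Proposition \ref{prop: der_of_ZLDelta}, this space is glued from products $\ol\times_i Z([0,l_i-1-s_i]_{\rho_i})$ with $s_i\lest\min(l_i,n(\rho_i))$ and $\sum r_i^0 s_i=k_1$. That sum condition forces $s_i=\min(l_i,n(\rho_i))$, so every such $\sigma$ is a subquotient of $\ol\Pi(\mfr m^-)$. By Theorem \ref{thm highestderivative}(1) for the constituents, and by the Leibniz rule, the derivatives of $\ol\Pi(\mfr m^-)$ vanish above $k_{\mfr m^-}$. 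Therefore $\pi_1^{(j)}=0$ for $j>k_{\mfr m^-}$. Conversely, the subrepresentation $Z(\mfr m^-)\subseteq\pi_1$ has nonzero $k_{\mfr m^-}$-th derivative, and exactness gives $\pi_1^{(k_{\mfr m^-})}\neq0$. So $k_2=k_{\mfr m^-}$. The same argument, applied to the finite-length representation $\pi_2$, continues inductively, with the relevant multisegment at stage $s$ being $\mfr m^{(s-1)-}$.

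For (1), it suffices to note that $l_i\mapsto l_i-\min(l_i,n(\rho_i))$ only decreases lengths, and $\min(\cdot,n(\rho_i))$ is monotone. Hence $k_{\mfr m}\gest k_{\mfr m^-}$, and the composition $\lambda_{\mfr m}$ is non-increasing.

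For (2), composing the iterated functors $\partial^{k_j}\circ J_{N}$ via \eqref{E:r-BZ} identifies $(\cdots(Z(\mfr m)^{(k_1)})^{(k_2)}\cdots)^{(k_s)}$ with the Whittaker functional space of a Jacquet module along the parabolic of type reversed $\lambda_{\mfr m}$. This uses the transitivity of Jacquet functors and the fact that the last partial derivatives produce a generic character. Because the iterated highest derivative is nonzero at every step, \eqref{eq generalizedWhit} gives a nonzero semi-Whittaker space. Ordering of blocks does not matter up to conjugation of semi-Whittaker pairs.

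For (3), suppose $n(\rho_i)\mid l_i$. Then at every stage each $l_i$ is either $0$ or at least $n(\rho_i)$, so Theorem \ref{thm highestderivative}(3) applies at every step. Each derivative is therefore exactly $Z$ of the next multisegment, with multiplicity one, and the final stage is $\ol{G_0}=\mu_n$ acting by $\epsilon$ once. The semi-Whittaker space is then one-dimensional. The main obstacle I expect is the upper bound in the key step: controlling derivatives of $\pi_1$ that are not its socle requires the full glued description above, and in the KP case one must also track the $n$-equivalence classes and multiplicities rather than actual isomorphisms.
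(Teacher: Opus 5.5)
Your proposal is correct and follows the paper's route, which derives the corollary directly from Theorem \ref{thm highestderivative} and \eqref{E:r-BZ}. You also make explicit a step the paper leaves implicit: every composition factor of $Z(\mfr m)^{(k_1)}$ is a subquotient of $\ol{\Pi}(\mfr m^-)$, while $Z(\mfr m^-)$ embeds in it, so the iteration gives $k_2=k_{\mfr m^-}$ (and so on) even when the highest derivative is reducible.

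The block-order point in (2) is handled exactly as in the paper, through its identification of semi-Whittaker pairs with partitions: iterated right derivatives actually produce the model for the reversed composition $(k_s,\dots,k_1)$. Strictly speaking, the pairs $(U_r,\psi_\lambda)$ for different orderings are not conjugate, but this is harmless for Theorem \ref{T:WF}, because the injection \eqref{E:injsemiWh} of G\'omez--Gourevitch--Sahi holds for any ordering.
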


\subsection{Homogeneity and socle}

Given a representation $\tau$ of $\ol{P_r}$, we let $k_{\tau}$ be the largest integer $k$ such that $\tau^{(k)}\neq 0$. Similarly, given a representation $\pi$ of $\ol{G_r}$, we set $k_{\pi}:=k_{\pi\rest_{\ol{P_r}}}$. 

A representation $\tau\in\Rep(\ol{P_r})$ is called \emph{homogeneous} if for any non-zero subrepresentation $\sigma$ of $\tau$, we have $k_{\sigma}=k_{\tau}$. Similarly, a representation $\pi\in\Rep_\epsilon(\ol{G_r})$ is called \emph{homogeneous} if it is so for $\pi\rest_{\ol{P_r}}$ (\cite{zelevinsky1980induced}*{\S 5.1}). We have the following proposition whose proof can be applied directly here.

\begin{prop}\cite{zelevinsky1980induced}*{Proposition 5.3}\label{prop prodhomogeneous} 
Let $\pi\in\Rep_{\epsilon}(\ol{G_{r_1}})$ be homogeneous and $\tau\in\Rep_{\epsilon}(\ol{G_{r_2}})$. Then $\pi\ol{\times}\tau\in\Rep_{\epsilon}(\ol{G_{r_1+r_2}})$ is homogeneous.
\end{prop}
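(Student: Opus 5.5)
We will follow Zelevinsky's argument for \cite{zelevinsky1980induced}*{Proposition 5.3}, adapted to the extended product $\ol{\times}$. The relevant input is the exact sequence of Proposition \ref{prop:BZ's prop4.13}(1) together with parts (2)--(4) of that proposition. Set $k:=k_\pi$. Restricting to $\ol{P_{r}}$ with $r=r_1+r_2$, we have
\[
0\rightarrow (\pi\rest_{\ol{P_{r_1}}})\ol{\times}\tau \rightarrow (\pi\ol{\times}\tau)\rest_{\ol{P_r}} \rightarrow \pi\ol{\times}(\tau\rest_{\ol{P_{r_2}}})\rightarrow 0.
\]
By the Leibniz rule (the corollary after Proposition \ref{prop:BZ's prop4.13}), $k_{\pi\ol\times\tau}=k_\pi+k_\tau$. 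Exactness of derivatives shows that only the subrepresentation $A:=(\pi\rest_{\ol{P_{r_1}}})\ol{\times}\tau$ can contribute in degree $k_\pi+k_\tau$; the quotient $\pi\ol{\times}(\tau\rest_{\ol{P_{r_2}}})$ has derivatives $\pi\ol\times \tau^{(i)}$, of degree at most $k_\tau<k_\pi+k_\tau$, using that $k_\pi\gest 1$ when $r_1\gest1$. The case $r_1=0$ is trivial.

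The first step is to reduce to subrepresentations of $A$. Let $\sigma\subset(\pi\ol\times\tau)\rest_{\ol{P_r}}$ be nonzero. If $\sigma\cap A=0$, then $\sigma$ embeds into $\pi\ol{\times}(\tau\rest_{\ol{P_{r_2}}})$, which by Proposition \ref{prop:BZ's prop4.13}(2) is of the form $i(\pi\ol\boxtimes \tau|)$, a product $\rho\ol\times(\cdot)$ with $\rho=\pi$ in the first slot. We then need to exclude this, or handle it by an induction that uses the homogeneity of $\pi$. Following Zelevinsky, we will instead filter $\tau\rest_{\ol{P_{r_2}}}$ by the BZ filtration $(\Phi^+)^{j-1}\Psi^+(\tau^{(j)})$. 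This yields a filtration of $(\pi\ol\times\tau)\rest_{\ol{P_r}}$ whose pieces are of two types:
\begin{itemize}
\item[(a)] $(\pi\rest_{\ol{P_{r_1}}})\ol\times\tau$, whose sub-pieces, via the filtration of $\pi\rest_{\ol{P_{r_1}}}$, are $((\Phi^+)^{i-1}\Psi^+\pi^{(i)})\ol\times\tau$;
\item[(b)] $\pi\ol\times (\Phi^+)^{j-1}\Psi^+(\tau^{(j)}) \simeq (\Phi^+)^{j-1}\Psi^+(\pi\ol\times\tau^{(j)})$, by Proposition \ref{prop:BZ's prop4.13}(2).
\end{itemize}
Pieces of type (b) have all derivatives concentrated in degree $j\lest k_\tau$, with nonzero $j$-th derivative on any nonzero subrepresentation.

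The key step, following Zelevinsky's reformulation, is to show that $\sigma$ meets $A$ nontrivially and that $k_{\sigma\cap A}=k_\pi+k_\tau$. Homogeneity of $\pi$ means that every nonzero $\ol{P_{r_1}}$-subrepresentation $\omega\subset\pi$ has $\omega^{(k)}\ne0$. We will prove the auxiliary claim that for any nonzero $\ol{P_r}$-submodule $\omega'\subset\tau'\ol\times\rho$, with $\tau'$ a homogeneous $\ol{P_{r_1}}$-module of level $k$, one has $k_{\omega'}\gest k+k_\rho$. The proof is by induction on $k$ using Proposition \ref{prop:BZ's prop4.13}(3),(4). Part (4) gives $\Phi^-(\omega')\ne0$, and $\Phi^-(\omega')$ is a submodule of $\Phi^-(\tau'\ol\times\rho)$, which sits in the exact sequence with subobject $\Phi^-(\tau')\ol\times\rho$ and quotient $\Psi^-(\tau')\ol\times\rho\rest$. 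Here $\Phi^-(\tau')$ is homogeneous of level $k-1$. We then iterate $\Phi^-$ exactly $k-1$ times and apply $\Psi^-$ at the end.

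The main obstacle we expect is controlling the quotient term $\Psi^-(\tau')\ol\times(\rho\rest_{\ol{P}})$ at each stage. Zelevinsky handles it by showing that, because $\tau'$ is homogeneous, the image of $\Phi^-(\omega')$ in that quotient cannot be hit prematurely. Concretely, if $\Phi^-(\omega')$ meets the subobject nontrivially, we apply induction; otherwise $\Phi^-(\omega')$ embeds into $\Psi^-(\tau')\ol\times\rho\rest$, which forces $\Psi^-(\tau')\ne0$, so $k=1$, and then its derivatives are $\Psi^-(\tau')\ol\times\rho^{(j)}$ with $j\lest k_\rho$, giving degree $1+j$. We will check that this argument goes through verbatim, using only exactness of $\Phi^\pm,\Psi^\pm$ and the compatibility formulas of Proposition \ref{prop:BZ's prop4.13}. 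The multiplicities introduced by $\ol\boxtimes$ in degenerate cases are harmless, since only vanishing or nonvanishing matters. Applying the claim with $\tau'=\pi\rest_{\ol{P_{r_1}}}$ gives $k_\sigma\gest k_\pi+k_\tau=k_{\pi\ol\times\tau}$ for every nonzero $\sigma$, hence equality, which is homogeneity.
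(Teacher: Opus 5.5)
There is a genuine gap, and for the statement exactly as written (both factors are representations of $\ol{G}$'s, and homogeneity is assumed only for $\pi$) it cannot be closed.

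\textbf{The reduction to $A$ is never carried out.} You correctly note that a nonzero $\sigma\subset(\pi\ol{\times}\tau)\rest_{\ol{P_r}}$ with $\sigma\cap A=0$, where $A=(\pi\rest_{\ol{P_{r_1}}})\ol{\times}\tau$, has to be excluded. Nothing in the proposal excludes it. Your last sentence nevertheless applies the auxiliary claim, which only concerns submodules of $A$, to an arbitrary $\sigma$.

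Such $\sigma$ exist. Take $n=1$, $r_1=r_2=1$, $\pi=\nu^{-1/2}$, $\tau=\nu^{1/2}$.
\begin{itemize}
\item Every representation of $G_1$ is homogeneous because $P_1$ is trivial.
\item $k_{\pi\times\tau}=2$.
\item $\pi\times\tau$ contains $\mbm{1}_{G_2}$ (the constant functions), whose restriction to $P_2$ has $k=1$.
\item This restriction meets $A$ trivially and maps isomorphically onto the one-dimensional quotient $\pi\times(\tau\rest_{P_1})$.
\end{itemize}
Genuine examples arise the same way: if $n(\rho)=1$, Corollary \ref{cor derivativeZ} shows that $Z([0,1]_\rho)\subset\rho\ol{\times}\rho\nu_\rho$ has level $r_0$, while $\rho\ol{\times}\rho\nu_\rho$ has level $2r_0$.

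\textbf{The auxiliary claim is also false as stated.} If $\rho=\rho_a\oplus\rho_b$ with $k_{\rho_b}<k_{\rho_a}$, the submodule $\tau'\ol{\times}\rho_b$ has level $k+k_{\rho_b}<k+k_\rho$. Your base case only produces the upper bound $1+j\lest 1+k_\rho$. The needed lower bound would require every nonzero submodule of $\Psi^-(\tau')\ol{\times}(\rho\rest_{\ol{P}})$ to have nonzero $k_\rho$-th derivative, which is a homogeneity property of $\rho$ you do not have.

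\textbf{The step ``$\Psi^-(\tau')\neq 0$ forces $k=1$'' is wrong.} For an irreducible generic principal series $\pi_0$ of $G_2$, $\pi_0\rest_{P_2}$ is homogeneous of level $2$ with $\pi_0^{(1)}\neq0$. So the case where $\Phi^-(\omega')$ misses $\Phi^-(\tau')\ol{\times}\rho$ is untreated for $k\gest 2$.

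\textbf{What should be proved instead.} The paper actually uses this proposition in the proof of Proposition \ref{prop homogeneous}(1), for $Z([0,l_1-2]_{\rho_1})\ol{\times}(\ol{\Pi}(\mfr{m}')\rest_{\ol{P_{r-r_1}}})$. That is the mixed product of case (3): $\rho\ol{\times}\tau$ with $\rho\in\Rep_\epsilon(\ol{G_{r_1}})$ arbitrary and $\tau\in\Rep_\epsilon(\ol{P_{r_2}})$ homogeneous, and the claim is that it is homogeneous of level $k_\tau$. This version needs no two-step filtration, because Proposition \ref{prop:BZ's prop4.13}(2) gives $\Omega(\rho\ol{\times}\tau)\simeq\rho\ol{\times}\Omega(\tau)$ for $\Omega=\Phi^{\pm},\Psi^{\pm}$. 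One can argue by induction on $k=k_\tau$:
\begin{itemize}
\item If $k=1$, then $\Phi^-(\rho\ol{\times}\tau)=0$. Every nonzero submodule $\omega$ therefore equals $\Psi^+\Psi^-(\omega)$ and has $\omega^{(1)}\neq0$.
\item If $k\gest 2$, then $\Phi^-(\tau)$ is homogeneous of level $k-1$ (use $\Phi^+\Phi^-(\tau)\into\tau$). If $\Phi^-(\omega)\neq 0$, the induction hypothesis inside $\rho\ol{\times}\Phi^-(\tau)$ gives $\omega^{(k)}\neq0$.
\item If instead $\Phi^-(\omega)=0$, then $V_r$ acts trivially on $\omega$. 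This is impossible. Homogeneity forces $\tau^{V_{r_2}}=0$, since $\tau^{V_{r_2}}$ is a submodule of level at most $1$. Also, $V_r$ is normal in $\ol{P_r}$ and contained in the inducing subgroup, so $V_r$-invariants of $\rho\ol{\times}\tau$ are functions valued in the $V_{r_2}$-invariants of $\rho\ol{\boxtimes}\tau$, and these vanish.
\end{itemize}
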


\begin{prop}

Let $\mfr m=\Delta_1+\dots+\Delta_k$ be a multisegment. Then we have
\[k_{\Pi(\mfr m)}=k_{\ol \Pi(\mfr m)}=k_{Z(\Delta_1)}+\dots+k_{Z(\Delta_k)}=\sum_{i=1}^kr_i^0\min(l_i,n(\rho_i)).\]

\end{prop}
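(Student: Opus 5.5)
We prove the three equalities by applying the Leibniz rule to the extended product $\ol\Pi(\mfr m)=Z(\Delta_1)\ol\times\dots\ol\times Z(\Delta_k)$, and then passing to its direct summand $\Pi(\mfr m)$.

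\textbf{Step 1: the extended product.} By Corollary \ref{cor productderivative}, a positive multiple $N\cdot\ol\Pi(\mfr m)^{(l)}$ is glued from multiples of
\[Z(\Delta_1)^{(l_1)}\ol\times\dots\ol\times Z(\Delta_k)^{(l_k)},\qquad l_1+\dots+l_k=l.\]
Corollary \ref{cor derivativeZ} and Proposition \ref{prop: der_of_ZLDelta} show that $Z(\Delta_i)^{(l_i)}=0$ whenever $l_i>k_{Z(\Delta_i)}=r_i^0\min(l_i,n(\rho_i))$. Hence every piece vanishes when $l>\sum_i k_{Z(\Delta_i)}$, so $k_{\ol\Pi(\mfr m)}\lest \sum_i k_{Z(\Delta_i)}$.

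For the reverse inequality, take $l=\sum_i k_{Z(\Delta_i)}$. The only surviving piece is the one with $l_i=k_{Z(\Delta_i)}$ for every $i$. Each highest derivative $Z(\Delta_i)^{(k_{Z(\Delta_i)})}$ is nonzero, being a positive multiple of either $Z([0,l_i-1-n(\rho_i)]_{\rho_i})$ or $\epsilon$. An extended product of nonzero finite-length representations is nonzero, since restriction to the open subgroup, the finite induction $\Ind$ and the normalized parabolic induction each preserve nonvanishing. The degenerate convention in the Remark following Corollary \ref{cor productderivative} covers the factors that are copies of $\epsilon$ and keeps this piece nonzero. Since $N>0$, this gives $\ol\Pi(\mfr m)^{(l)}\neq0$, so $k_{\ol\Pi(\mfr m)}=\sum_i k_{Z(\Delta_i)}$.

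\textbf{Step 2: descending to $\Pi(\mfr m)$.} By Proposition \ref{prop irredBZPandEBZP}, $\Pi(\mfr m)$ is a direct summand of $\ol\Pi(\mfr m)$. As derivatives are additive functors, this immediately gives $k_{\Pi(\mfr m)}\lest k_{\ol\Pi(\mfr m)}$.

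For the reverse inequality we use the structure of the EMTP.
\begin{enumerate}
\item By Corollary \ref{cor MTP}, $\ol\boxtimes_i Z(\Delta_i)$ is a finite direct sum of twists $(\boxtimes_i Z(\Delta_i))_{\omega}\cdot\chi$ with $\chi\in X^*(M/M_\beta^{[n]})$.
\item Parabolic induction commutes with such twists, since these characters factor through $\det$ of $G_r$ restricted to $M$; more precisely they are twists by characters of $M$ that extend to $G_r$. Consequently $\ol\Pi(\mfr m)$ is a direct sum of representations of the form $\Pi(\mfr m)_{\omega'}\cdot\chi'$, with $\chi'$ a character of $G_r/G_r^{(n)}$.
\item Twisting by a character of $\det$ does not change $k$: the derivative of $\pi\chi'$ is $\pi^{(k)}\chi'$ restricted to $G_{r-k}$, because $V_r$ has determinant one.
\end{enumerate}
So $k_{\ol\Pi(\mfr m)}$ equals the maximum of $k_{\Pi(\mfr m)_{\omega'}}$ over the compatible central characters $\omega'$ that occur. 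It remains to see that all these values coincide.

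\textbf{The main obstacle} is exactly this independence of $\omega'$. Here we use Proposition \ref{P:mot-ex}(2): the group $X^*(F^\times/F^{\times n})$ acts transitively on the MTPs in question, and this action is again by twists of $\det$-characters. Therefore all $k_{\Pi(\mfr m)_{\omega'}}$ coincide, and $k_{\Pi(\mfr m)}=k_{\ol\Pi(\mfr m)}$. In the S-cover case the two products coincide up to multiplicity, so there is nothing to check.

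The final equality $k_{Z(\Delta_i)}=r_i^0\min(l_i,n(\rho_i))$ is Corollary \ref{cor derivativeZ}.
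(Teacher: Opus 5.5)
Your overall route is the paper's: the Leibniz rule (Corollary~\ref{cor productderivative}) for $\ol\Pi(\mfr m)$, then the comparison of $\Pi(\mfr m)$ with $\ol\Pi(\mfr m)$, then Corollary~\ref{cor derivativeZ}. Step~1 is fine, apart from reusing $l_i$ both for segment lengths and for derivative orders. The problems are in Step~2.

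\textbf{Item (2), KP case.} A character $\chi\in X^*(M/M_\beta^{[n]})$ has the form $\chi_1\circ\det\boxtimes\dots\boxtimes\chi_k\circ\det$ with \emph{independent} $\chi_i\in X^*(F^\times/F^{\times n})$. It extends to $G_r$ only when $\chi_1=\dots=\chi_k$. So parabolic induction does not commute with these twists, and your justification of item~(2) is wrong. In the KP case the conclusion can be rescued with tools you already cite. The twist $(\boxtimes_i Z(\Delta_i))_\omega\cdot\chi$ is an irreducible summand of
\[
\Ind_{\ol{M_\beta^{[n]}}}^{\ol M}(\cdots)\cdot\chi\simeq\Ind_{\ol{M_\beta^{[n]}}}^{\ol M}(\cdots),
\]
so it is some MTP $(\boxtimes_i Z(\Delta_i))_{\omega'}$. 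By Proposition~\ref{P:mot-ex}(2), that MTP is a twist of a fixed one by a character $\chi'\circ\det$ of $G_r$. More directly, Proposition~\ref{P:mot-ex}(1) followed by parabolic induction already gives $\ol\Pi(\mfr m)\simeq m\cdot\bigoplus_{\omega'}\Pi(\mfr m)_{\omega'}$, and your transitivity paragraph then finishes the KP case.

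\textbf{The S-cover case.} Your sentence ``the two products coincide up to multiplicity'' is false, and this is exactly where the failure of item~(2) matters. For S-covers the blocks commute, so
\[
\ol\boxtimes_i Z(\Delta_i)\simeq\bigoplus_{\chi_1,\dots,\chi_k}\boxtimes_i Z(\Delta_i)\chi_i .
\]
Hence $\ol\Pi(\mfr m)$ is the sum of the ordinary products $Z(\Delta_1)\chi_1\times\dots\times Z(\Delta_k)\chi_k$ over independent $\chi_i$. There is no S-analogue of Proposition~\ref{P:mot-ex}(3). These products are in general neither isomorphic to $\Pi(\mfr m)$ nor $\det$-twists of it. For example, for $n=2$ the Savin cover of $G_1$ is abelian, so $\rho\chi\not\simeq\rho$ for $\chi\neq 1$.

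So your argument gives no lower bound for $k_{\Pi(\mfr m)}$ for S-covers. The fix is to bypass $\ol\Pi(\mfr m)$ there. Since the ordinary tensor product is block compatible, the Bernstein--Zelevinsky Leibniz rule holds for the ordinary product itself. Your Step~1 argument applied directly to $\Pi(\mfr m)$ then gives $k_{\Pi(\mfr m)}=\sum_i k_{Z(\Delta_i)}$. This is how the paper implicitly treats S-covers in Proposition~\ref{prop homogeneous}(2).
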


\begin{proof}

The first two equalities follow from the Lebnitz law (\emph{cf.} Corollary \ref{cor productderivative}) and the relation between the extended and non-extended Bernstein--Zelevinsky product (\emph{cf.} Proposition \ref{prop irredBZPandEBZP}). The third equality follows from Corollary \ref{cor derivativeZ}.

\end{proof}

For a segment $\Delta=[0,l-1]_{\rho}$ with $\rho\in\Cusp_{\epsilon}(\ol{G_{r_{0}}})$, using Corollary \ref{cor derivativeZ} we have $k_{\Delta}=\min(lr_0,n(\rho)r_0)$. Then we define $\Delta^-=[0,l-1-\min(l,n(\rho))]_{\rho}$. For a multisegment $\mfr m=\Delta_1+\dots+\Delta_k$, we define $\mfr m^-:=\Delta_1^-+\dots+\Delta_k^-$.

\begin{prop}\label{prop homogeneous}
	
Let $\mfr m=\Delta_1+\dots+\Delta_k$ be such that $\Delta_i$ does not precede $\Delta_j$ for $1\lest i<j\lest k$ and $[1-\min(l_i,n(\rho_i))+n(\rho_i),n(\rho_i)]_{\rho_i'}\cap \bigcup_{j=i+1}^{k}\Delta_{j}=\emptyset$ for each $i=1,\dots,k-1$, where $\rho_i':=\rho_i\nu^{l_i-1}_{\rho_i}$ denotes the representation at the right endpoint of $\Delta_i$. Then the following holds:

\begin{enumerate}

\item $\ol \Pi(\mfr m)$ is homogeneous, and thus $k_{\mfr m}=k_{\ol \Pi(\mfr m)}$.

\item The socle of the highest derivative $\Pi(\mfr m)^{(k_{\mfr m})}$ of $\Pi(\mfr m)$ is $c_{\mfr m}\cdot Z(\mfr m^-)$ with
\[c_{\mfr m}:=\begin{cases} \prod_{i=1}^k\binom{n(\rho_i)}{\min(l_i,n(\rho_i))}&\quad \text{for S-covers;}\\
(d_{r-k_{\mfr m}}/d_r)\cdot\prod_{i=1}^k\binom{n(\rho_i)}{\min(l_i,n(\rho_i))}&\quad \text{for KP-covers}.
\end{cases}\]
Moreover, the number $c_{\mfr m}$ is exactly the total multiplicity of $Z(\mfr m^-)$ in $\Pi(\mfr m)^{(k_{\mfr m})}$ as subquotients.

\item In particular, the socle of the highest derivative $Z(\mfr m)^{(k_{\mfr m})}$ of $Z(\mfr m)$ is a finite isotypic sum of $Z(\mfr m^-)$.

\end{enumerate}

\end{prop}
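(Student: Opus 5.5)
We follow the strategy of Zelevinsky's proof of \cite{zelevinsky1980induced}*{Theorem 6.1}, and argue by induction on $k$, the number of segments. For $k=1$, $\ol\Pi(\mfr m)$ is a direct sum of twists of $Z(\Delta_1)$. Each such twist is irreducible, hence homogeneous, and Corollary \ref{cor derivativeZ} gives its highest derivative.

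\textbf{Part (1).} Write $\ol\Pi(\mfr m)=Z(\Delta_1)\ol\times\ol\Pi(\mfr m')$ up to multiplicity (Proposition \ref{prop BZPasso}), where $\mfr m'=\Delta_2+\dots+\Delta_k$. The hypotheses pass to $\mfr m'$, so $\ol\Pi(\mfr m')$ is homogeneous by induction. Proposition \ref{prop prodhomogeneous} needs the homogeneous factor on the left, so we reorder using Lemma \ref{lem WeylonMTP} together with the contragredient trick of Lemma \ref{lem MTPcontragredient}.

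If that does not work directly, we argue as in Zelevinsky. Take a nonzero $\ol{P_r}$-submodule $\omega$ of $\ol\Pi(\mfr m)|_{\ol{P_r}}$, and suppose $k_\omega<k_{\mfr m}$. Let $\sigma$ be an irreducible subrepresentation of $\omega^{(k_\omega)}$. By Lemma \ref{lm:BZ's lemma4.7}(2), the $n$-cuspidal support of $\sigma\nu$ lies in $\sum_i\mathrm{supp}\,\Delta_i$. On the other hand, the Leibniz rule (Corollary \ref{cor productderivative}) filters $\ol\Pi(\mfr m)^{(k_\omega)}$ by the pieces $Z(\Delta_1)^{(j_1)}\ol\times\dots\ol\times Z(\Delta_k)^{(j_k)}$ with $j_i\in r_i^0\Z$. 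By Proposition \ref{prop: der_of_ZLDelta}, a non-highest choice $j_i<r_i^0\min(l_i,n(\rho_i))$ in some slot $i$ leaves the cuspidal point $\rho_i'\nu_{\rho_i}^{-j_i/r_i^0}$ in the support of $\sigma$. After the $\nu$-shift this becomes a point of $[1-\min(l_i,n(\rho_i))+n(\rho_i),n(\rho_i)]_{\rho_i'}$, since $\nu=\nu_{\rho_i}^{n(\rho_i)}$. That point must lie in the support of some $\Delta_j$. Segments $\Delta_j$ with $j<i$ are excluded by the non-preceding order, and $j>i$ is excluded by the emptiness hypothesis, which gives a contradiction. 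The main obstacle is the bookkeeping in this step: the right endpoint exponents shift by $\nu=\nu_{\rho}^{n(\rho)}$, and the order condition must exclude the $j<i$ case. Once homogeneity holds, $k_{\mfr m}=k_{\ol\Pi(\mfr m)}$ follows from the preceding proposition.

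\textbf{Part (2).} By the Leibniz rule and Corollary \ref{cor derivativeZ}, in degree $k_{\mfr m}$ only the top term survives. So, up to the multiplicities in Proposition \ref{prop BZPasso}, $\Pi(\mfr m)^{(k_{\mfr m})}$ equals $Z(\Delta_1)^{(top)}\times\dots\times Z(\Delta_k)^{(top)}$. This is a multiple of $\Pi(\mfr m^-)$, and we compute the multiplicity via Propositions \ref{prop: der_of_ZLDelta} and \ref{prop WhdimBZprod}. For KP-covers the factors $d_{r_i-k_i}/d_{r_i}$ combine with the tensor-product constants $d$ to give $d_{r-k_{\mfr m}}/d_r$. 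This yields the constant $c_{\mfr m}$ and its interpretation as the total multiplicity of $Z(\mfr m^-)$. Next we check that $\mfr m^-$, with the induced order, still has no $\Delta_i^-$ preceding $\Delta_j^-$ for $i<j$. Hence $\Pi(\mfr m^-)$ has socle $Z(\mfr m^-)$ occurring with multiplicity one as a subquotient (Zelevinsky classification \cite{kaplan2022classification}). The socle of the multiple is therefore $c_{\mfr m}\cdot Z(\mfr m^-)$.

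\textbf{Part (3).} $Z(\mfr m)\subset\Pi(\mfr m)$, and the derivative functor is exact, so $Z(\mfr m)^{(k_{\mfr m})}\subset\Pi(\mfr m)^{(k_{\mfr m})}$. It is nonzero because homogeneity gives $k_{Z(\mfr m)}=k_{\mfr m}$. Hence its socle lies in the socle $c_{\mfr m}\cdot Z(\mfr m^-)$ computed in (2).
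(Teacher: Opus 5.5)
Your Parts (2) and (3) match the paper's argument, but Part (1) has a genuine gap.

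\textbf{The reordering route.} Lemma \ref{lem WeylonMTP} only permutes factors on the Levi subgroup. Parabolic induction in a different order gives a non-isomorphic representation. More fundamentally, no rule of the form ``one factor homogeneous $\Rightarrow$ product of two $\ol{G}$-representations homogeneous'' can hold. For $n=1$, $\nu^{-1/2}\times\nu^{1/2}$ is a product of two (homogeneous) characters of $G_1$. It is generic, yet it contains the trivial character of $G_2$, whose second derivative vanishes. The form of Proposition \ref{prop prodhomogeneous} that the paper's proof actually uses is for $\rho\ol{\times}\tau$ with $\tau$ a homogeneous $\ol{P}$-module.

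\textbf{The fallback argument.} Your claim that segments $\Delta_j$ with $j<i$ are excluded by the non-preceding order is false. Take $n(\rho)=1$, $\Delta_1=[0,2]_\rho$, $\Delta_2=[0,0]_\rho$; all hypotheses of the proposition hold. Consider the Leibniz piece with $j_1=r_0$, $j_2=0$. The only non-highest slot is $i=2$, and the shifted point $\rho\nu_\rho$ lies in $\Delta_1$, so no contradiction arises. The hypotheses only control segments \emph{after} $\Delta_i$, so a one-point argument at an arbitrary slot cannot close. The full multiset inclusion of Lemma \ref{lm:BZ's lemma4.7}(2) does kill this example, since two copies of $\rho\nu_\rho$ would be needed. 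But turning that into a general proof is a separate combinatorial argument you have not given.

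\textbf{How the paper avoids this.} It argues by induction on $k$ and applies the cuspidal-support argument only to $\Delta_1$, which has no predecessors. Write $\ol\Pi(\mfr m)$ as $Z(\Delta_1)\ol{\times}\ol\Pi(\mfr m')$ and use Proposition \ref{prop:BZ's prop4.13}(1). A nonzero submodule $\sigma$ cannot embed in $Z(\Delta_1)\ol{\times}(\ol\Pi(\mfr m')\rest_{\ol{P}})$, since $\rho_1'\nu$ does not occur in $\mfr m$.

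One then follows $\sigma$ through successive blocks $(\Phi^-)^{r_1^0}$, which are nonzero by Proposition \ref{prop:BZ's prop4.13}(4). At a step $i<\min(l_1,n(\rho_1))$, the point $\rho_1'\nu_{\rho_1}^{n(\rho_1)-i}$ forbids escape into $Z([0,l_1-i-1]_{\rho_1})\ol{\times}(\ol\Pi(\mfr m')\rest_{\ol{P}})$. At the last step, the vanishing of higher derivatives of $Z(\Delta_1)$ forces $\sigma$ into $Z(\Delta_1^-)\ol{\times}(\ol\Pi(\mfr m')\rest_{\ol{P}})$. That representation is homogeneous by the $\ol{P}$-module version of Proposition \ref{prop prodhomogeneous} together with the induction hypothesis for $\mfr m'$. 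This gives $\sigma^{(k_{\mfr m})}\neq 0$.
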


\begin{proof}

The proof of Statement (1) is an adaptation of \cite{bernstein1977induced}*{Theorem 4.11} as well as \cite{emerton2014local}*{Proposition 4.3.2}. We give some details.

Recall $\ol{\Pi}(\mfr{m})=Z(\Delta_1)\ol{\times}\dots\ol{\times} Z(\Delta_k)$. It suffices to show that for any non-zero irreducible subrepresentation $\sigma$ in $\ol{\Pi}(\mfr{m})|_{\ol{P_r}}$, we have $k_{\sigma}=k_{\ol{\Pi}(\mfr{m})}=k_{\mfr m}$. The strategy is to apply induction on $k$. When $k=1$, this is from Proposition \ref{prop: der_of_ZLDelta}.

Since the operator $\ol{\times}$ is associative up to scalar, we know $\sigma$ is an irreducible subrepresentation of $(Z(\Delta_1)\ol{\times}\ol{\Pi}(\mfr{m}^{\prime}))|_{\ol{P_r}}$, where $\mfr{m}^{\prime}=\Delta_2+\dots+\Delta_k$. By Proposition \ref{prop:BZ's prop4.13}, we have the exact sequence of $\ol{P_r}$-representations:
\[
0 \rightarrow (Z(\Delta_1)\rest_{\ol{P_{r_1}}})\ol{\times}\ol{\Pi}(\mfr{m}^\prime) \xrightarrow{} (Z(\Delta_1)\ol{\times}\ol{\Pi}(\mfr{m}^\prime))\rest_{\ol{P_{r}}}  \xrightarrow{} Z(\Delta_1)\ol{\times}(\ol{\Pi}(\mfr{m}^\prime)\rest_{\ol{P_{r-r_1}}}) \rightarrow 0.
\]
Then $\sigma$ is a subrepresentation of $(Z(\Delta_1)\rest_{\ol{P_{r_1}}})\ol{\times}\ol{\Pi}(\mfr{m}^\prime)$ or $Z(\Delta_1)\ol{\times}(\ol{\Pi}(\mfr{m}^\prime)\rest_{\ol{P_{r-r_1}}})$.

First, we claim that $\sigma$ can not be in $Z(\Delta_1)\ol{\times}(\ol{\Pi}(\mfr{m}^\prime)\rest_{\ol{P_{r-r_1}}})$. If it were, we will have $\sigma^{(k)}$ embeds in $(Z(\Delta_1)\ol{\times}(\ol{\Pi}(\mfr{m}^\prime)\rest_{\ol{P_{r-r_1}}}))^{(k)}=Z(\Delta_1)\ol{\times}(\ol{\Pi}(\mfr{m}^\prime)\rest_{\ol{P_{r-r_1}}})^{(k)}$. In particular, if $\sigma^{(k)}$ is the highest derivative of $\sigma$, then its $n$-cuspidal support contains $\rho^{\prime}_1$ and by Lemma \ref{lm:BZ's lemma4.7} $\rho^{\prime}_1\nu$ is contained in $\Delta_1+...+\Delta_k$. But this is impossible due to our choice of ordering.

Thus, $\sigma$ must be in $(Z(\Delta_1)\rest_{\ol{P_{r_1}}})\ol{\times}\ol{\Pi}(\mfr{m}^\prime)$. It follows from Proposition \ref{prop:BZ's prop4.13} and Proposition \ref{prop: der_of_ZLDelta} that for $i<r^0_1$ we have $$ (\Phi^{-})^{i}((Z(\Delta_1)\rest_{\ol{P_{r_1}}})\ol{\times}\ol{\Pi}(\mfr{m}^\prime))=(\Phi^{-})^{i}((Z(\Delta_1)\rest_{\ol{P_{r_1}}}))\ol{\times}\ol{\Pi}(\mfr{m}^\prime)$$ and \small \[
(\Phi^{-})^{r^0_1}((Z(\Delta_1)\rest_{\ol{P_{r_1}}}))\ol{\times}\ol{\Pi}(\mfr{m}^\prime) \hookrightarrow{} (\Phi^{-})^{r^0_1}((Z(\Delta_1)\rest_{\ol{P_{r_1}}})\ol{\times}\ol{\Pi}(\mfr{m}^\prime)) \twoheadrightarrow{}  Z([0,l_1-2]_{\rho_1})\ol{\times}(\ol{\Pi}(\mfr{m}^\prime)\rest_{\ol{P_{r-r_1}}}).
\]\normalsize Since $\sigma$ has at least one nonzero derivative, we have $(\Phi^{-})^{r^0_1-1}(\sigma) \neq 0$ and by Proposition \ref{prop:BZ's prop4.13}.(4), $(\Phi^{-})^{r^0_1}(\sigma) \neq 0$. 

Let $\sigma_1$ be a non-zero irreducible subrepresentation of $(\Phi^{-})^{r^0_1}(\sigma)$. If $\sigma_1$ is contained in $Z([0,l_1-2]_{\rho_1})\ol{\times}(\ol{\Pi}(\mfr{m}^\prime)\rest_{\ol{P_{r-r_1}}})$, then by a similar argument $\rho^{\prime}_1\nu^{(n(\rho_1)-1)/n(\rho_1)}$ is contained in $\Delta_1+...+\Delta_k$. This is possible only when $n(\rho_1)=1$. We assume $n(\rho_1)=1$. In this case, if $\sigma_1$ is contained in $(\Phi^{-})^{r^0_1}((Z(\Delta_1)\rest_{\ol{P_{r_1}}}))\ol{\times}\ol{\Pi}(\mfr{m}^\prime)$, we will have $(\Phi^{-})^{r^0_1}(\sigma_1) \neq 0$ following a similar process. But since $(Z(\Delta_1))^{(2r^0_1)}=0$, we see $(\Phi^{-})^{r^0_1}(\sigma_1)$ is contained in $(\Phi^{-})^{2r^0_1}((Z(\Delta_1)\rest_{\ol{P_{r_1}}})\ol{\times}\ol{\Pi}(\mfr{m}^\prime))=(\Phi^{-})^{2r^0_1}((Z(\Delta_1)\rest_{\ol{P_{r_1}}}))\ol{\times}\ol{\Pi}(\mfr{m}^\prime)$. Proceeding in this fashion we find that $(\Phi^{-})^{jr^0_1}(\sigma) \neq 0$ for any $j$, which is impossible, so in this case $\sigma_1$ is contained in $Z([0,l_1-2]_{\rho_1})\ol{\times}(\ol{\Pi}(\mfr{m}^\prime)\rest_{\ol{P_{r-r_1}}})$. This latter representation is homogeneous by Proposition \ref{prop prodhomogeneous} and the fact that  $\ol{\Pi}(\mfr{m}^\prime)$ is homogeneous from the inductive
hypothesis. Then $\sigma_1^{(k_{\mfr{m}^{\prime}})} \neq 0$ and so is $ \sigma^{(k_{\mfr{m}})}$, since $\sigma^{(k_{\mfr{m}^\prime})}_1 \subset\sigma^{(k_{\mfr{m}})}= ((\Phi^{-})^{r^0_1}(\sigma))^{(k_{\mfr{m}^{\prime}})}$.

If $n(\rho_1)\neq 1$, as mentioned before we have $\sigma_1 \subset (\Phi^{-})^{r^0_1}((Z(\Delta_1)\rest_{\ol{P_{r_1}}}))\ol{\times}\ol{\Pi}(\mfr{m}^\prime)$. We take $\sigma_i$ to be a non-zero irreducible subrepresentation of $(\Phi^-)^{r^0_1}(\sigma_{i-1})$ if possible and apply the argument in the last paragraphs to $\sigma_{i-1}$ for $i=2,\dots,\text{min}(l_1,n(\rho_1))$. More precisely, write 
$$\Gamma_i:=(\Phi^{-})^{ir^0_1}((Z(\Delta_1)\rest_{\ol{P_{r_1}}}))\ol{\times}\ol{\Pi}(\mfr{m}^\prime)$$
and  assume that $\sigma_{i-1} \subset \Gamma_{i-1}$. Using Proposition \ref{prop:BZ's prop4.13} and Proposition \ref{prop: der_of_ZLDelta} that for $j<r^0_1$ we have 
\[(\Phi^{-})^{j}(\Gamma_{i-1})=(\Phi^{-})^{r_0^1(i-1)+j}((Z(\Delta_1)\rest_{\ol{P_{r_1}}}))\ol{\times}\ol{\Pi}(\mfr{m}^\prime)\]
and thus
 \[
\Gamma_i=(\Phi^{-})^{ir^0_1}((Z(\Delta_1)\rest_{\ol{P_{r_1}}}))\ol{\times}\ol{\Pi}(\mfr{m}^\prime) \hookrightarrow{} (\Phi^{-})^{r^0_1}(\Gamma_{i-1}) \twoheadrightarrow{}  Z([0,l_1-i-1]_{\rho_1})\ol{\times}(\ol{\Pi}(\mfr{m}^\prime)\rest_{\ol{P_{r-r_1}}}).
\]
Then $0\neq (\Phi^{-})^{r^0_1-1}(\sigma_{i-1})\subset (\Phi^{-})^{r_1^0-1}(\Gamma_{i-1})$ and by  Proposition \ref{prop:BZ's prop4.13}.(4), we get $(\Phi^{-})^{r^0_1}(\sigma_{i-1}) \neq 0$. 
Then $\sigma_i$ can indeed be chosen to be a subrepresentation of either $\Gamma_i$ or $Z([0,l_1-i-1]_{\rho_1})\ol{\times}(\ol{\Pi}(\mfr{m}^\prime)\rest_{\ol{P_{r-r_1}}})$. If furthermore $i< \text{min}(l_1,n(\rho_1))$, then
 by considering the $n$-cuspidal support and Lemma \ref{lm:BZ's lemma4.7}, the latter case implies that $\rho_1'\nu^{(n(\rho_1)-i)/n(\rho_1)}$ is contained in $\Delta_1+\dots+\Delta_k$, which is impossible from our assumption. Thus we must have that $\sigma_i\subset \Gamma_i$. Now consider the case where $i=\text{min}(l_1,n(\rho_1))$. In this case we cannot have $\sigma_{\text{min}(l_1,n(\rho_1))r^0_1}\subset\Gamma_{\text{min}(l_1,n(\rho_1))r^0_1}$, otherwise as in the $n(\rho_1)=1$ case we may deduce that $(\Phi^{-})^{jr_1^0}(\sigma_{\text{min}(l_1,n(\rho_1))r^0_1})\neq 0$ for any $j$, which is absurd. Thus we have $\sigma_{\text{min}(l_1,n(\rho_1))} \subset Z([0,l_1-1-\text{min}(l_1,n(\rho_1))]_{\rho_1})\ol{\times}(\ol{\Pi}(\mfr{m}^\prime)\rest_{\ol{P_{r-r_1}}})$. Using Proposition \ref{prop prodhomogeneous} and the induction hypothesis for $\ol{\Pi}(\mfr{m}^\prime)$, we get $0\neq \sigma^{(k_{\mfr{m}^\prime})}_{\text{min}(l_1,n(\rho_1))} \subset \sigma^{(k_{\mfr{m}})}$. 

Statement (2) follows from the Lebnitz law as well as the relation between the extended and non-extended Bernstein--Zelevinsky product. More precisely, in the S-cover case we just take the usual Bernstein--Zelevinsky product and then apply Corollary \ref{cor derivativeZ}. In the KP-cover case, we have
\[(Z(\Delta_1)\ol{\times}\dots\ol{\times} Z(\Delta_k))^{(k_{\mfr m})}=(Z(\Delta_1)^{(k_{\Delta_1})}\ol{\times}\dots\ol{\times} Z(\Delta_k)^{(k_{\Delta_k})}).\]
Using \cite{zou2022metaplectic}*{Proposition 2.3.(3)} and Corollary \ref{cor derivativeZ} to calculate the total multiplicity of $\Pi(\mfr m)_{\omega}$  (resp. $\Pi(\mfr m^-)_{\omega_-}$ ) in the left-hand (resp. right-hand) side, we get 
\[ \bigoplus_{\omega}m_{\omega}\cdot \Pi(\mfr m)_{\omega}^{(k_{\mfr m})}= \bigoplus_{\omega_-}m_{\omega_-}\cdot\Pi(\mfr m^-)_{\omega_-}\]
where $\omega$ and $\omega_-$ in the direct sums range over all possible compatible central characters, and $m_{\omega}$ and $m_{\omega_{-}}$ are related non-negative integers satisfying 
\[\frac{n^{2k}d_r}{\prod_{i=1}^kd_{r_i}}=\sum_{\omega}m_{\omega}\quad\text{and}\quad \frac{n^{2k}d_{r-k_{\mfr m}}}{\prod_{i=1}^kd_{r_i}}\prod_{i=1}^k\binom{n(\rho_i)}{\min(l_i,n(\rho_i))}=\sum_{\omega_-}m_{\omega_-}.\]
Taking the socle and comparing the multiplicity, we finish the proof.

Statement (3) follows from Statement (2) and the fact that socle of the highest derivative of $Z(\mfr m)_{\omega}$ is contained in that of $\Pi(\mfr m)_{\omega}$.
\end{proof}

\begin{rmk}

If we introduce the partial order $\leq_e$ on the set of segments as in \cite{lapid2016parabolic}*{\S 3.4} (for two segments not in the same cuspidal line they are simply not comparable), then we may always rearrange $\Delta_i$ such that $\Delta_i\geq_e \Delta_j$ implies that $i\lest j$ and thus the assumptions of Proposition \ref{prop homogeneous} are satisfied under this arrangement.

\end{rmk}

In the special case where the total multiplicity in Proposition  \ref{prop homogeneous}.(2) is one, we have the following result.
\begin{cor}

Assume that for all $i=1,\dots,k$ we have $l_i\gest n(\rho_i)$, then \[\soc(Z(\mfr m)^{(k_{\mfr m})})=\soc(\Pi(\mfr m)^{(k_{\mfr m})})=Z(\mfr m^-).\]

\end{cor}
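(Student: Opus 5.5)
The plan is to derive the corollary directly from Proposition \ref{prop homogeneous}.(2) and (3) by showing that the constant $c_{\mfr m}$ equals $1$ under the hypothesis $l_i\gest n(\rho_i)$ for all $i$. First I would arrange the segments $\Delta_1,\dots,\Delta_k$ as in the preceding remark, ordering by $\leq_e$, so that the hypotheses of Proposition \ref{prop homogeneous} hold. Since $Z(\mfr m)$ and $\Pi(\mfr m)$ do not depend on this arrangement, no generality is lost.

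Next I would evaluate $c_{\mfr m}$. If $l_i\gest n(\rho_i)$ then $\min(l_i,n(\rho_i))=n(\rho_i)$, so each binomial factor is $\binom{n(\rho_i)}{n(\rho_i)}=1$. For S-covers this already gives $c_{\mfr m}=1$. For KP-covers it remains to show $d_{r-k_{\mfr m}}=d_r$, where now $k_{\mfr m}=\sum_i r_i^0 n(\rho_i)$. I would argue as in the proof of Corollary \ref{cor derivativeZ}. Each $n(\rho_i)r_i^0=nr_i^0/l(\rho_i)$ is a multiple of $n$ because $l(\rho_i)\mid r_i^0$. Hence $k_{\mfr m}\equiv 0 \pmod n$, and so
\[2(r-k_{\mfr m})\mbf a-(r-k_{\mfr m})+1\equiv 2r\mbf a-r+1 \pmod n,\]
which gives $d_{r-k_{\mfr m}}=d_r$. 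With this, Proposition \ref{prop homogeneous}.(2) shows that $\soc(\Pi(\mfr m)^{(k_{\mfr m})})=Z(\mfr m^-)$ and that $Z(\mfr m^-)$ occurs with total multiplicity one as a subquotient of $\Pi(\mfr m)^{(k_{\mfr m})}$.

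Finally, for $Z(\mfr m)$ I would use that $Z(\mfr m)\into\Pi(\mfr m)$. The derivative functor is exact, so $Z(\mfr m)^{(k_{\mfr m})}\into\Pi(\mfr m)^{(k_{\mfr m})}$. By Theorem \ref{thm highestderivative}.(1), or by the first proposition of this subsection together with homogeneity, the degree $k_{\mfr m}$ is indeed the highest nonzero derivative of $Z(\mfr m)$, so this subrepresentation is nonzero. Its socle is therefore a nonzero subrepresentation of $\soc(\Pi(\mfr m)^{(k_{\mfr m})})=Z(\mfr m^-)$, and since the latter is irreducible the two socles coincide, consistent with Proposition \ref{prop homogeneous}.(3).

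I expect the only delicate point to be the KP case. There the isomorphisms hold only up to $n$-equivalence and a choice of compatible central character. So the statement "total multiplicity one" has to be read as: for a fixed $\omega$, the derivative $\Pi(\mfr m)_\omega^{(k_{\mfr m})}$ contains a single copy of one $Z(\mfr m^-)_{\omega_-}$. I would check this against the multiplicity bookkeeping $\sum m_\omega$ versus $\sum m_{\omega_-}$ in the proof of Proposition \ref{prop homogeneous}.(2), where the two sums now coincide.
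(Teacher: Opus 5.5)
Your proposal is correct and is essentially the paper's own argument. The paper states this corollary as the special case $c_{\mfr m}=1$ of Proposition \ref{prop homogeneous}.(2)--(3). The binomial factors are trivial, and $d_{r-k_{\mfr m}}=d_r$ because $n\mid k_{\mfr m}$, which is exactly the check made in Corollary \ref{cor derivativeZ}. The passage to $Z(\mfr m)$ then goes through $Z(\mfr m)\into\Pi(\mfr m)$ and exactness of the derivative, as you do; your reliance on $\Pi(\mfr m)$ being independent of the admissible ordering of segments when you reorder by $\leq_e$ is tacit in the paper as well.
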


\subsection{Proof of Theorem \ref{thm highestderivative}}
From Proposition \ref{prop homogeneous}, we get Statement (1) and (2) of Theorem \ref{thm highestderivative}. Assume the condition of Statement (3), then the total multiplicity $c_{\mfr m
}$ in Statement (2) is one by direct calculation. Let $Z(\mfr m^-)$ be the socle of $Z(\mfr m)^{(k_{\mfr m})}$. Using the argument in \cite{zelevinsky1980induced}*{\S 8.1} with the aid of Propsition \ref{prop derivativepairing}, we get that the cosocle of $Z(\mfr m)^{(k_{\mfr m})}$ is $Z(\mfr m^-)$ as well. 
Since the total multiplicity of $Z(\mfr m^-)$ in $Z(\mfr m)^{(k_{\mfr m})}$ is one, we must have $Z(\mfr m)^{(k_{\mfr m})}=Z(\mfr m^-)$. This proves Statement (3) of Theorem \ref{thm highestderivative}.

\subsection{The generic spectrum}
We record the following classification of generic representations, which is a special case of Theorem \ref{thm highestderivative} but also of independent interest.

\begin{thm}\label{thm genericclassify}

Let $\mfr m=\Delta_1+\dots+\Delta_k$ be a multisegment.

\begin{enumerate}
\item If $l_i> n(\rho_i)$ for some $i$, then the Whittaker models of both $\Pi(\mfr m)$ and $Z(\mfr m)$ are zero.

\item If $l_i\lest n(\rho_i)$ for every $i$, then the Whittaker model of $Z(\mfr m)$ is non-zero, meaning that $Z(\mfr m)$ is generic.

\item Furthermore, if $l_i= n(\rho_i)$ for every $i$, then the Whittaker dimensions of both  $\Pi(\mfr m)_{}$ and $Z(\mfr m)$ are one.
\end{enumerate}

\end{thm}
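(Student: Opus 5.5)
Throughout, use that $Z(\mfr m)$ is generic exactly when its highest derivative has degree $r$, i.e. when $k_{\mfr m}=r$. Also use that the Whittaker dimension of a finite-length representation equals $\dim \pi^{(r)}$, which is additive in exact sequences.

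For (1) and (2), I would invoke Theorem \ref{thm highestderivative}(1) for $Z(\mfr m)$ and the preceding Proposition for $\Pi(\mfr m)$. These give
\[k_{Z(\mfr m)}=k_{\Pi(\mfr m)}=\sum_{i=1}^k r_i^0\min(l_i,n(\rho_i)),\]
while $r=\sum_i r_i^0 l_i$. Since $\min(l_i,n(\rho_i))\lest l_i$, this sum equals $r$ if and only if $l_i\lest n(\rho_i)$ for every $i$. In case (1) some $l_i>n(\rho_i)$, so the highest derivative degree of $\Pi(\mfr m)$ is strictly less than $r$. Hence $\Pi(\mfr m)^{(r)}=0$, and by exactness of $(\cdot)^{(r)}$ also $Z(\mfr m)^{(r)}=0$, as $Z(\mfr m)$ is a subrepresentation. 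In case (2) the degree of the highest derivative of $Z(\mfr m)$ is exactly $r$, so $Z(\mfr m)^{(r)}\neq 0$ and $Z(\mfr m)$ is generic.

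For (3), I would compute $\dim\Wh(\Pi(\mfr m))$ with Proposition \ref{prop WhdimBZprod} and Proposition \ref{prop WhittakerZL}. With $l_i=n(\rho_i)$ one has $\dim\Wh(Z(\Delta_i))=\binom{n(\rho_i)}{l_i}=1$ for an S-cover, and $1/d_{r_i}$ for a KP-cover. For a KP-cover, the formula $\bigl(\prod_i \dim\Wh(Z(\Delta_i))d_{r_i}\bigr)/d_r$ then reduces to $1/d_r$. So I must show $d_r=1$. Since $n\mid n(\rho_i)r_i^0=r_i$, we have $n\mid r$, so $d_r=\gcd(n,2r\mbf a-r+1)=\gcd(n,1)=1$. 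In both cases $\dim\Wh(\Pi(\mfr m))=1$. Alternatively, this is the number $c_{\mfr m}$ of Proposition \ref{prop homogeneous}(2) with $\mfr m^-=\emptyset$.

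Since $Z(\mfr m)\subseteq\Pi(\mfr m)$ is generic by (2) and Whittaker dimension is additive, $\dim\Wh(Z(\mfr m))\lest 1$, and therefore it equals $1$.

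The step I expect to need the most care is the KP-cover case: it depends on the $n$-equivalence-class bookkeeping and on the fact that $d_r=1$ when $n\mid r$. Everything else follows directly from the highest-derivative computation.
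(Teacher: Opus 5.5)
Your proposal is correct. For (1) and (2) it is the paper's argument: the paper records this theorem as a special case of Theorem \ref{thm highestderivative}, and one compares $k_{\mfr m}=\sum_i r_i^0\min(l_i,n(\rho_i))$, which equals both $k_{Z(\mfr m)}$ and $k_{\Pi(\mfr m)}$, with $r=\sum_i r_i^0 l_i$.

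For (3) your route differs slightly from the paper's.

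\textbf{The paper's route.} It sets $l_i=n(\rho_i)$, so that $\mfr m^-=\emptyset$, in Proposition \ref{prop homogeneous}(2) and Theorem \ref{thm highestderivative}(3). Then $c_{\mfr m}=1$ gives $\dim\Wh(\Pi(\mfr m))=1$, and $Z(\mfr m)^{(r)}\simeq Z(\emptyset)$ is one-dimensional directly.

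\textbf{Your route.} You get $\dim\Wh(\Pi(\mfr m))=1$ from the heredity formula of Proposition \ref{prop WhdimBZprod} combined with Proposition \ref{prop WhittakerZL}. You then squeeze
\[
1\lest\dim\Wh(Z(\mfr m))\lest\dim\Wh(\Pi(\mfr m))=1,
\]
using (2) for the lower bound and exactness of $\Wh$ for the upper bound.

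\textbf{Comparison.} Your squeeze bypasses the socle/cosocle argument (adapted from Zelevinsky via Proposition \ref{prop derivativepairing}) that Theorem \ref{thm highestderivative}(3) relies on. It only needs genericity plus a dimension count. The paper's route is a special case of the stronger statement $Z(\mfr m)^{(k_{\mfr m})}\simeq Z(\mfr m^-)$ whenever all $l_i\gest n(\rho_i)$. A squeeze cannot recover that in general, because $\Pi(\mfr m)^{(k_{\mfr m})}$ then has constituents other than $Z(\mfr m^-)$.

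\textbf{One step to make explicit.} In the KP case you should say why $n\mid r_i$. Here $r_i=n(\rho_i)r_i^0=n\,r_i^0/l(\rho_i)$, and $l(\rho_i)\mid r_i^0$ for KP-covers (the fact used in the proof of Corollary \ref{cor derivativeZ}). Hence $n\mid r_i$ and $n\mid r$, so $d_{r_i}=d_r=1$.
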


\section{Wavefront sets and local Langlands correspondence}\label{section WFandLLC}

\subsection{Wavefront set of $Z(\mfr{m})$}
We first briefly recall the notion of wavefront sets for genuine representations of a BD-cover $\overline{G}_r$. Let $\mca{N}(G_r)$ denote the partially-ordered set of nilpotent orbits in $\mfr{g}={\rm Lie}(G_r)$ under the conjugation action of $G_r$. Here the partial order is given by the closure ordering in the usual topology of $\mfr{g}$ induced from that of $F$. 

Let $(\pi, V_\pi)$ be a finite-length genuine representation of $\ol{G_r}$. It gives a character distribution $\chi_\pi$ in a neighborhood of $0$ in $\mfr{g}$. Moreover, there exists a compact open subset $S_\pi$ of $0$ such that for every smooth function $f$ with compact support in $S_\pi$, one has (see \cites{How74, HC99, Li12})
\begin{equation} \label{E:char}
\chi_\pi(f) = \sum_{\mca{O} \in \mca{N}(G_r)}{c_{\mca{O}, \psi_F}(\pi) }\cdot \int \hat{f}_{\psi_F} \ \mu_\mca{O}.
\end{equation}
 Here $\mu_\mca{O}$ is a certain Haar measure on $\mca{O}$ properly normalized, and $\hat{f}_{\psi_F}$ is the Fourier transform of $f$ with respect to the Cartan--Killing form on $\mfr{g}$ and the non-trivial character 
$$\psi_F: F\to \C^\times.$$
One has $c_\mca{O}(\pi):=c_{\mca{O}, \psi_F}(\pi) \in \C$. 
We have used  implicitly in \eqref{E:char} an exponent map ${\rm exp}: L \to \overline{G}_r$ defined for a sufficiently small open set $L \subset \mfr{g}$ containing $0$; it is used to ``pull-back" the character distribution of $\pi$ defined on $\overline{G}_r$ to be on (a small neighborhood of 0 in) $\mfr{g}$, see \cite{Li12}*{\S 4.3}. 

Denote
$$\mca{N}(\pi) = \set{\mca{O} \in \mca{N}(G_r): \ c_\mca{O}(\pi) \ne 0},$$
and let
$${\rm WF}(\pi) \subseteq \mca{N}(\pi)$$
be the subset consisting of all maximal elements in $\mca{N}(\pi)$. 
Here, ${\rm WF}(\pi)$ is called the wavefront set of $\pi$. For $G_r$ (i.e., in the $n=1$ case) and irreducible $\pi$, the set $\WF(\pi)$ is singleton from the work of M\oe glin--Waldspurger \cite{moeglin1987modeles}, and it can be computed from the mulltisegment $\mfr{m}$ underlying $\pi=Z(\mfr{m})$. The goal of this section is to extend such results on $G_r$ to KP-covers or Savin covers.

For this purpose, we briefly recall the notion of degenerate Whittaker models and their relation with wavefront set, see \cite{moeglin1987modeles, gomez2017generalized} for details.
Let $\mfr{p}:=\mfr{p}_\mca{O}$ be a partition of $r$, which is associated with a nilpotent orbit $\mca{O}$ of $G_r$. 
First, the nilpotent orbit $\mfr{p}$ corresponds to a $\mfr{sl}_2$-triple, and this gives rise to a nilpotent subgroup $U_\mfr{p} \subseteq G_r$ which surjects naturally to a Heisenberg group $H_\mfr{p}$ as in 
$$U_\mfr{p} \onto H_\mfr{p},$$
both of which depend on $\mfr{p}$. Moreover, the orbit $\mfr{p}$ also gives the center $C_\mfr{p}$ of $H_\mfr{p}$. Using $\psi_F$ (and the Killing form on $\mfr{g}$), one has a natural non-trivial character $\psi_\mfr{p}: C_\mfr{p} \to \C^\times$.
Let $\omega_{\psi_\mfr{p}}$ be the irreducible representation of $H_\mfr{p}$ with central character $\psi_\mfr{p}$. By pull-back, we have a representation $\omega_{\psi_\mfr{p}}^*$ of $U_\mfr{p}$. For a finite-length genuine representation $\pi$ of $\ol{G_r}$, the degenerate Whittaker model of $\pi$ is then by definition
$$\Wh(\pi; \mfr{p}):= \Hom_{\overline{G}_r}(\pi, {\rm Ind}_{\mu_n \times U_\mfr{p}}^{\overline{G}_r} \epsilon \boxtimes \omega_{\psi_\mfr{p}}^*) = \Hom_{\mu_n \times U_\mfr{p}}(\pi, \epsilon \boxtimes \omega_{\psi_\mfr{p}}^*).$$
We also define
$$\mca{N}_{\rm{deg}}(\pi):=\max \set{\mca{O}: \ \Wh(\pi; \mfr{p}_\mca{O}) \ne 0} \subseteq \mca{N}(G_r).$$
It is shown in \cites{moeglin1987modeles, Var14, patel2014theorem} that  the equality 
\begin{equation} \label{E:W=W}
\WF(\pi) = \mca{N}_{\rm{deg}}(\pi)
\end{equation}
holds; moreover, with the involved measures properly normalized, for every $\mca{O}$ in the above (equal) sets, one has $c_\mca{O}(\pi) = \dim_\C \Wh(\pi; \mfr{p}_\mca{O})$.
Furthermore, by \cite[Theorem A]{gomez2017generalized}, we have an injection
\begin{equation}\label{E:injsemiWh}
\Wh_{\mfr{p}}(\pi)\simeq\Wh(J_{U_\mfr{p}}(\pi)) \into \Wh(\pi; \mfr{p}),
\end{equation}
where  
the left-hand side represents the $\mfr{p}$-semi-Whittaker model of $\pi$.   By \cite{patel2014theorem}*{Theorem 2}, the embedding \eqref{E:injsemiWh} becomes an isomorphism for those orbits $\mfr{p}$ that lie in the wavefront set of $\pi$. As a result, if we similarly define
\[\mca{N}_{\rm{sem}}(\pi):=\max \set{\mca{O}: \ \Wh_{\mfr{p}_\mca{O}}(\pi) \ne 0} \subseteq \mca{N}(G_r),\]
 then we have
\begin{equation}\label{E:WHdWHsd}
\mca{N}_{\rm{sem}}(\pi)=\mca{N}_{\rm{deg}}(\pi)=\WF(\pi).
\end{equation}

Before stating the main theorem, we need the following lemma for general BD-covers. Write $\beta=(r_1,\dots,r_k)$ for a composition of $r$ and $M_\beta=\prod_{i=1}^{k} G_{r_i}$ for the Levi subgroup of $G_r$.

\begin{lm}\label{lem: BDWFtensorprod}

For an $n$-fold BD-cover $\ol{G_r}$ and $\pi_i \in \Irrg(\ol{G_{r_i}})$ with $i=1,\dots,k$ and $r=r_1+\dots+r_k$ and any irreducible constituent $\pi$ of the EMTP $\ol{\boxtimes_{i=1}^k} \pi_i$, we have
\[\mca{N}_{\rm{deg}}( \pi) = \prod_{i=1}^k \mca{N}_{\rm{deg}}(\pi_i).\]

\end{lm}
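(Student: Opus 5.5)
The plan is to reduce everything to the block-compatible subgroup $\ol{M_\beta^{[n]}}$, where the representation genuinely is an ordinary tensor product and degenerate Whittaker models factor. Nilpotent orbits of $M_\beta$ are tuples $(\mca{O}_1,\dots,\mca{O}_k)$ of orbits of the blocks, and the closure order is the product order. For a tuple $\mfr{p}=(\mfr{p}_1,\dots,\mfr{p}_k)$ of partitions, the group $U_{\mfr p}=\prod_i U_{\mfr p_i}$, the Heisenberg quotient and $\omega^*_{\psi_{\mfr p}}$ all decompose blockwise. Every $U_{\mfr p_i}$ is unipotent, so $U_{\mfr p}\subseteq M_\beta^{[n]}$, and the canonical unipotent splitting is compatible with the blocks.

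\textbf{Step 1 (irrelevance of the constituent).} By Corollary \ref{cor MTP}, any two irreducible constituents of $\ol{\boxtimes_{i=1}^k}\pi_i$ differ by a twist by some $\chi\in X^*(M/M_\beta^{[n]})$. Such a twist is trivial on $U_{\mfr p}$, so $\Wh(\pi;\mfr p)$ does not depend on which constituent $\pi$ we pick. Moreover $\Wh(\cdot;\mfr p)$ is additive, and the EMTP is a finite direct sum of such twists. Hence $\Wh(\pi;\mfr p)\neq 0$ if and only if $\Wh(\ol{\boxtimes}_i\pi_i;\mfr p)\neq0$.

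\textbf{Step 2 (Mackey).} Since $M_\beta^{[n]}$ is normal in $M$ with finite quotient, we have
\[\Ind_{\ol{M_\beta^{[n]}}}^{\ol M}(\tau)\rest_{\ol{M_\beta^{[n]}}}\simeq\bigoplus_{m\in M/M_\beta^{[n]}}\tau^m,\qquad \tau:=\textstyle\boxtimes_i \pi_i\rest_{\ol{M_i^{(n)}}}.\]
The $\mfr p$-twisted Jacquet functor $\Hom_{U_{\mfr p}}(\cdot,\omega^*_{\psi_{\mfr p}})$ only sees this restriction. For each $m$, one can choose representatives $m$ in the diagonal torus (or in $\prod_i G_{r_i}$). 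Then $\tau^m=\boxtimes_i(\pi_i\rest_{\ol{M_i^{(n)}}})^{m_i}$, up to commutator signs that are absorbed into the genuine character and so do not affect vanishing. Because $\tau^m$ is an honest exterior tensor product of the blocks and $U_{\mfr p}$ is a product, the multiplicity space is
\[\Hom_{U_{\mfr p}}\bigl(\tau^m,\omega^*_{\psi_{\mfr p}}\bigr)\simeq\bigotimes_i\Hom_{U_{\mfr p_i}}\bigl((\pi_i\rest)^{m_i},\omega^*_{\psi_{\mfr p_i}}\bigr).\]
This uses that the Weil representation of a product Heisenberg group is the tensor product, together with finite-dimensionality from \cite{patel2014theorem}. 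Conjugation by $m_i$ maps $\Wh(\cdot;\mfr p_i)$ to $\Wh(\cdot;\mfr p_i^{m_i})$, and $\mfr p_i^{m_i}$ is in the same orbit. Also, $\pi_i\rest_{\ol{G_{r_i}^{(n)}}}$ has nonzero $\mfr p_i$-model exactly when $\pi_i$ does, because $U_{\mfr p_i}\subseteq G_{r_i}^{(n)}$. So the tensor product is nonzero iff $\Wh(\pi_i;\mfr p_i)\ne0$ for every $i$.

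\textbf{Step 3 (maximal elements).} The support set $\{\mfr p:\Wh(\pi;\mfr p)\neq0\}$ is therefore the product of the individual support sets. The maximal elements of a product of posets under the product order are exactly the products of maximal elements, which gives $\mca N_{\rm deg}(\pi)=\prod_i\mca N_{\rm deg}(\pi_i)$.

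\textbf{Expected obstacle.} The main difficulty is Step 2: one must make sure that the Heisenberg/oscillator data for $M_\beta$ and the canonical splitting of $U_{\mfr p}$ in $\ol{M}$ really restrict to the blockwise data, and that the Hom-space factors as a tensor product for these non-irreducible smooth restrictions. To handle the factorization I would work with twisted Jacquet modules, using exactness and $J_{U_1\times U_2}(V_1\boxtimes V_2)=J_{U_1}V_1\boxtimes J_{U_2}V_2$. For the data, I would use the uniqueness of the unipotent splitting recalled in \S2.
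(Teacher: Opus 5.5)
Your proposal is correct and follows essentially the same route as the paper: twisting by characters of $M/M_\beta^{[n]}$ (Corollary \ref{cor MTP}) makes the choice of constituent irrelevant, Mackey restriction of the EMTP to $\ol{M_\beta^{[n]}}$ gives $\bigoplus_s {}^s\pi_\flat$, and the degenerate Whittaker space then factors blockwise. You spell out two steps the paper leaves implicit, namely the tensor factorization of the Hom spaces and the passage to maximal elements of the product poset; your ``commutator signs'' caveat is unnecessary, since $\ol{G_{r_j}}$ commutes with $\ol{G_{r_i}^{(n)}}$ for $j\neq i$, so ${}^s\pi_\flat$ is exactly $\boxtimes_i ({}^{s_i}\pi_i)\rest_{\ol{G_{r_i}^{(n)}}}$.
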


\begin{proof}

Let $\mfr{p}_i, 1\lest i \lest k$ be a partition of $r_i$. For $\pi_i \in \Irrg(\ol{G_{r_i}})$, let $\Wh(\pi_i; \mfr{p}_i)$ be the degenerate Whittaker model of $\pi_i$. 
 Now we set $\mfr{p}:=(\mfr{p}_1, ..., \mfr{p}_k)$ and view it as an orbit for $M_\beta$.
For any $\pi \in \Irrg(\overline{M_\beta})$ and every character $\chi$ of $M_\beta$, it is easy to see 
$$\Wh(\pi \cdot \chi; \mfr{p}) \simeq \Wh(\pi; \mfr{p}).$$
Applying this to any irreducible constituent $\pi$ of  $\ol{\boxtimes_{i=1}^k} \pi_i$, it follows from Corollary \ref{cor MTP}(2) that 
\begin{equation} \label{E:red01}
\Wh(\ol{\boxtimes_{i=1}^k} \pi_i; \mfr{p}) \simeq c_1 \cdot \Wh(\pi; \mfr{p})
\end{equation}
as vector spaces, for some $c_1 \in \N_{\gest 1}$. On the other hand, if we write 
$$\pi_\flat:= \pi_1|_{\ol{G_{r_1}^{(n)}}} \boxtimes ... \boxtimes \pi_k|_{\ol{G_{r_k}^{(n)}}}$$
as a genuine representation of $\ol{M_\beta^{[n]}}:=\ol{\prod_{i=1}^kG_{r_i}^{(n)}}$, then by definition $\ol{\boxtimes_{i=1}^k} \pi_i = {\rm Ind}_{\ol{M_\beta^{[n]}}}^{\ol{M_\beta}} \pi_\flat$. 
We get by Mackey theory that
$$(\ol{\boxtimes_{i=1}^k} \pi_i)|_{\ol{M_\beta^{[n]}}} = \bigoplus_{s \in M_\beta/M_\beta^{[n]}} {}^s \pi_\flat.$$
We can write $s=(s_1, s_2, ..., s_i, ..., s_k)$ with $s_i \in G_{r_i}/G_{r_i}^{(n)}$. Then 
$${}^s \pi_\flat:= ({}^{s_1} \pi_1)|_{\ol{G_{r_i}^{(n)}}} \boxtimes ... \boxtimes ({}^{s_k}\pi_k)|_{\ol{G_{r_k}^{(n)}}}.$$
We have 
$$ \Wh(({}^{s_i} \pi_i)|_{\ol{G_{r_i}^{(n)}}}; \mfr{p}_i)= \Wh({}^{s_i} \pi_i; \mfr{p}_i) \simeq \Wh(\pi_i; \mfr{p}_i).$$
This gives
$$\Wh({}^s \pi_\flat; \mfr{p}) \simeq \prod_{i=1}^k \Wh(\pi_i; \mfr{p}_i)$$
and hence
$$ \Wh(\ol{\boxtimes_{i=1}^k} \pi_i; \mfr{p}) = \Wh((\ol{\boxtimes_{i=1}^k} \pi_i)|_{\ol{M_\beta^{[n]}}}; \mfr{p})   \simeq \bigoplus_{s \in M_\beta/M_\beta^{[n]}} \Wh({}^s \pi_\flat; \mfr{p})  \simeq \val{M_\beta/M_\beta^{[n]}} \cdot \prod_i \Wh(\pi_i; \mfr{p}_i).$$
This coupled with \eqref{E:red01} shows that $\mfr{p}=(\mfr{p}_i)_i$ supports the degenerate Whittaker model of $\pi$ if and only if $\mfr{p}_i$ supports degenerate Whittaker model of $\pi_i$ for every $i$. 

\end{proof}

Now we may state and prove the main theorem.

\begin{thm} \label{T:WF}
Let $\ol{G_r}$ be an $n$-fold Kazhdan--Patterson cover or a Savin cover. Assume $p\nmid n$. Let $Z(\mfr{m}) \in \Irrg(\overline{G_r})$ be associated with a multisegment $\mfr{m}$ and let $\lambda_\mfr{m}$ be the partition given as in \eqref{D:lambda}. Then
\begin{equation} \label{E:WF}
{\rm WF}(Z(\mfr{m}))=\set{\lambda_\mfr{m}}.
\end{equation}
In particular, the wavefront set of $Z(\mfr{m})$ is a singleton set. 
\end{thm}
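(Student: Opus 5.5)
By \eqref{E:WHdWHsd} it suffices to show $\mca{N}_{\rm sem}(Z(\mfr m))=\{\lambda_\mfr m\}$, i.e.\ that $Z(\mfr m)$ has a non-zero $\lambda_\mfr m$-semi-Whittaker model and that every orbit $\mfr p$ with $\Wh_{\mfr p}(Z(\mfr m))\neq 0$ satisfies $\mfr p\lest \lambda_\mfr m$ in the dominance (closure) order. The first half is Corollary \ref{cor generalizedWhitt}(2), obtained from Theorem \ref{thm highestderivative} by iterating highest derivatives together with \eqref{E:r-BZ} and \eqref{eq generalizedWhit}. The work is therefore in the upper bound.

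For the upper bound I would follow the M\oe glin--Waldspurger strategy and induct on $r$ via derivatives. Let $\mfr p=(p_1,\dots,p_t)$ with $\Wh_{\mfr p}(\pi)\neq 0$, where $\pi=Z(\mfr m)$. Since semi-Whittaker models are unchanged under permuting the composition up to orbit (the nilpotent orbit only depends on the partition), I may order $\mfr p$ so that $p_1$ is the largest part and it sits in the last block. Writing $\Wh_{\mfr p}(\pi)=\Wh(J_{U_{\mfr p}}(\pi))$ and factoring the Jacquet module through $N_{r-p_1,p_1}$, nonvanishing forces $\partial^{p_1}J_{N_{r-p_1,p_1}}(\pi)=\pi^{(p_1)}\neq 0$, so $p_1\lest k_1=k_\mfr m$. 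Moreover, the $(p_2,\dots,p_t)$-semi-Whittaker model of $\pi^{(p_1)}$ is non-zero. By Corollary \ref{cor productderivative}, applied to $\pi\into\Pi(\mfr m)$ or $\ol\Pi(\mfr m)$, the derivative $\pi^{(p_1)}$ is glued from constituents of $Z(\Delta_1)^{(j_1)}\ol\times\cdots\ol\times Z(\Delta_k)^{(j_k)}$ with $\sum j_i=p_1$. Proposition \ref{prop: der_of_ZLDelta} then says each constituent is an irreducible subquotient of $\ol\Pi(\mfr m')$, where $\mfr m'$ is obtained from $\mfr m$ by truncating each $\Delta_i$ by $s_i\lest \min(l_i,n(\rho_i))$ on the right. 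Exactness of $\Wh_{\mfr p'}$ then puts a nonzero $(p_2,\dots)$-model on some irreducible $Z(\mfr m'')$ occurring in $\ol\Pi(\mfr m')$. By induction, $(p_2,\dots,p_t)\lest \lambda_{\mfr m''}$.

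The hard step is the combinatorial comparison: I need $(p_1,\lambda_{\mfr m''})\lest \lambda_\mfr m$ for every such $\mfr m''$. To get it, I would first show a monotonicity property: if $Z(\mfr m'')$ is a subquotient of $\Pi(\mfr m')$, then $\lambda_{\mfr m''}\lest\lambda_{\mfr m'}$. This should come from the bound $k_{Z(\mfr m'')}\lest k_{\Pi(\mfr m')}$ together with the observation that the iterated derivative degrees of $\Pi(\mfr m')$ are sums over the segments of $r_i^0\min(\text{current length},n(\rho_i))$, each being a concave truncation. Here I would rely on the explicit formula $\lambda_\mfr m=\sum_i r_i^0\cdot(\text{partition of } l_i \text{ into parts } n(\rho_i),\dots,n(\rho_i),\text{remainder})$, i.e.\ a sum of ``column'' partitions, which is read off from Theorem \ref{thm highestderivative} by iteration. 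With this formula in hand, the required inequality becomes a statement that truncating segment $i$ by $s_i$ and then greedily truncating by $n(\rho_i)$ gives partial sums dominated by those of the greedy truncation from the start. I would prove this segment by segment and then sum, using that dominance is preserved under adding partitions taken as sums of parts. I expect this combinatorial dominance argument, together with the reduction from arbitrary subquotients of $\ol\Pi(\mfr m')$ to the standard-module bound, to be the main obstacle. Combined with the lower bound, it gives $\WF(Z(\mfr m))=\{\lambda_\mfr m\}$.
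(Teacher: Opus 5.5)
Your lower bound is the paper's (Corollary~\ref{cor generalizedWhitt}(2) with \eqref{E:injsemiWh}), but your upper bound takes a genuinely different route. The paper bounds $Z(\mfr m)$ by its standard module and computes $\WF(\Pi(\mfr m))$ in one step. It uses M\oe glin--Waldspurger's induction formula \eqref{E:O-ind}, Lemma~\ref{lem: BDWFtensorprod} for the metaplectic tensor product, and the single-segment case from Corollary~\ref{cor generalizedWhittseg}, and then recognizes $\lambda_\mfr m=\sum_i\lambda_{\Delta_i}$ as an induced orbit. You instead strip off one block of the composition at a time via \eqref{E:r-BZ} and induct on $r$, using only the Leibniz rule (Corollary~\ref{cor productderivative}) and the segment derivatives (Proposition~\ref{prop: der_of_ZLDelta}). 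The paper's argument is shorter and explains structurally why $\lambda_\mfr m$ is a sum of segment contributions. Yours stays inside the Bernstein--Zelevinsky calculus already set up for covers, so it needs no covering version of \eqref{E:O-ind} or Lemma~\ref{lem: BDWFtensorprod}.

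Two points in your write-up should be changed, though neither is fatal.

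First, the claim that semi-Whittaker models are insensitive to permuting the composition is unjustified away from maximal orbits. You also do not need it: for an arbitrary composition $\mu=(\mu_1,\dots,\mu_t)$, strip off the last block $\mu_t$, whatever its size.

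Second, the bound $k_{Z(\mfr m'')}\lest k_{\Pi(\mfr m')}$ only controls the first part, so it does not give the monotonicity $\lambda_{\mfr m''}\lest\lambda_{\mfr m'}$. The clean fix is never to pass to an irreducible $Z(\mfr m'')$. Instead, induct on the statement that every composition $\mu$ with $\Wh_\mu(\ol\Pi(\mfr m))\neq 0$ (segments in any order) satisfies $\mu\lest\lambda_\mfr m$.
\begin{itemize}
\item A multiple of $\ol\Pi(\mfr m)^{(\mu_t)}$ is glued from multiples of $\ol\Pi(\mfr m')$. Here $\mfr m'$ shortens each $\Delta_i$ by some $s_i\lest\min(l_i,n(\rho_i))$ with $\sum_i r_i^0s_i=\mu_t$; twists by characters of $G_{r_i}/G_{r_i}^{(n)}$ are invisible to the extended product.
\item By exactness, some $\ol\Pi(\mfr m')$ then carries a nonzero $(\mu_1,\dots,\mu_{t-1})$-model, so the induction hypothesis applies to it.
\end{itemize}

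The combinatorics you call the main obstacle is then a one-line check. Let $S_j$ denote the sum of the $j$ largest parts. Then $S_j(\lambda_\mfr m)=\sum_i r_i^0\min(l_i,j\,n(\rho_i))$, and $S_j(\lambda_{\mfr m'})\lest S_j(\lambda_\mfr m)$ trivially. Moreover,
$\mu_t+S_{j-1}(\lambda_{\mfr m'})=\sum_i r_i^0\min(l_i,s_i+(j-1)n(\rho_i))\lest S_j(\lambda_\mfr m)$,
because $s_i\lest n(\rho_i)$. The $S_j$ of $\lambda_{\mfr m'}$ with the part $\mu_t$ adjoined is the maximum of these two quantities, and adjoining a part preserves dominance. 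Hence $\mu\lest\lambda_\mfr m$.

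Since $Z(\mfr m)\subseteq\Pi(\mfr m)\subseteq\ol\Pi(\mfr m)$, this gives the upper bound. Your monotonicity statement then follows afterwards as a corollary, by combining it with the lower bound for $Z(\mfr m'')$.
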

\begin{proof}

In view of the equality \eqref{E:W=W}, it suffices to show 
\begin{equation} \label{E:Wh}
\mca{N}_{\rm{deg}}(Z(\mfr{m})) = \set{\lambda_\mfr{m}}.
\end{equation}
Note it follows from 
Corollary \ref{cor generalizedWhitt} that $\Wh(J_{U_{\lambda_\mfr{m}}}(Z(\mfr{m})))\ne 0$. Applying this and \eqref{E:injsemiWh} to $\mfr{p}:=\lambda_\mfr{m}$ and $\pi:=Z(\mfr{m})$, we get $\Wh(Z(\mfr{m}); \lambda_\mfr{m})\ne 0$, i.e., $\lambda_\mfr{m}$ supports degenerate Whittaker model of $Z(\mfr{m})$.

To prove \eqref{E:Wh}, it now suffices to show that $\mca{N}_{\rm{deg}}(Z(\mfr{m})) \lest \lambda_\mfr{m}$. Furthermore, we will show that
\begin{equation} \label{E:W-Pi}
\WF(\Pi(\mfr{m})) = \mca{N}_{\rm{deg}}(\Pi(\mfr{m})) = \set{\lambda_\mfr{m}},
\end{equation}
which implies $\WF(Z(\mfr{m})) = \mca{N}_{\rm{deg}}(Z(\mfr{m})) \lest \lambda_\mfr{m}$.

Now, by a result of M\oe glin--Waldspurger \cite{moeglin1987modeles}*{II.1}, we have
\begin{equation} \label{E:O-ind}
{\rm WF}(\Pi(\mfr{m})) = {\rm Ind}_{M_\beta}^{G_r} {\rm WF}( \boxtimes_{i=1}^k Z(\Delta_i) ),
\end{equation}
where $\boxtimes_{i=1}^k Z(\Delta_i)$ is the metaplectic tensor product of $Z(\Delta_i) \in \Irrg(\ol{G_{r_i}})$.
Using Lemma \ref{lem: BDWFtensorprod} for $\pi_i = Z(\Delta_i)$, we get 
\[\mca{N}_{\rm{deg}}( \boxtimes_{i=1}^k Z(\Delta_i)) = \prod_{i} \mca{N}_{\rm{deg}}(Z(\Delta_i)),
\]
or equivalently
\begin{equation*}\label{E:WF-eq}{\rm WF}( \boxtimes_{i=1}^k Z(\Delta_i)) = \prod_{i} {\rm WF}(Z(\Delta_i)).
\end{equation*}

To proceed for \eqref{E:W-Pi}, for every $1\lest i \lest k$, we consider $c_i \gest 0, 0\lest d_i < n(\rho_i)$ such that
$$l_i= c_i \cdot n(\rho_i) + d_i.$$

By Corollary \ref{cor generalizedWhittseg}, the partition $((r_i^0\cdot n(\rho_i))^{c_i}, r_i^0 d_i)$ corresponds to the largest nilpotent orbit that contributes to a semi-Whittaker model of $Z(\Delta_i)$. Thus using \eqref{E:WHdWHsd} we have
$${\rm WF}(Z(\Delta_i)) = ((r_i^0\cdot n(\rho_i))^{c_i}, r_i^0 d_i).$$
It can be checked easily that this partition is exactly $\lambda_{\Delta_i}$, i.e., the equality \eqref{E:WF} holds for the single segment $\mfr{m} = \Delta_1$ case.
From the definition of $\lambda_\mfr{m}$ it is easy to see that
$$\lambda_\mfr{m} = \sum_{i=1}^k \lambda_{\Delta_i} = \Ind_{M_\beta}^{G_r} (\lambda_{\Delta_1}, ..., \lambda_{\Delta_k}),$$
where the right hand side means the induction of orbits. In view of \eqref{E:O-ind}, this gives \eqref{E:W-Pi} as desired. The proof is completed.
\end{proof}

\subsection{Relation with local Langlands correspondence}
In this subsection, for Kazhdan--Patterson covers we reformulate Theorem \ref{T:WF} in the framework of \cite{GLLS} using a version of the hypothetical local Langlands correspondence (LLC) for such $\ol{G_r}$, and also the covering Barbasch--Vogan duality.

Let $\ol{G_r}$ be a Kazhdan--Patterson cover or a Savin cover. Let $\ol{G_r}^\vee$ be the complex dual group of $\ol{G_r}$. One has
$$\ol{G_r}^\vee \simeq
\begin{cases}
\set{(g, \gamma) \in G_r(\C) \times G_1(\C): \det(g)=\gamma^{d_r}} & \text{ for KP-covers},\\
G_r(\C) & \text{ for S-covers}.
\end{cases}
$$
Clearly we have a natural group homomorphism
$$\eta: \ol{G_r}^\vee \longrightarrow G_r(\C),$$
which is the identity for S-covers and is the projection on the first coordinate for KP-covers.

For every partition $\mfr{p}^\vee=(p_1, ..., p_k)$ of $r$, viewed as a nilpotent orbit of $\ol{G_r}^\vee$, we define
$$d_{BV, G_r}^{(n)}(\mfr{p}^\vee)=\sum_{i=1}^k \mfr{s}(p_i; n_\alpha)$$
with the partition $\mfr{s}(p_i; n_\alpha)$ given by 
\begin{equation}
\mfr{s}(p_i;n_\alpha):=(n_\alpha^{a_i}, b_i)=(\underbrace{n_\alpha, n_\alpha, ..., n_\alpha}_{a_i}, b_i),
\end{equation}
where $p_i=a_i \cdot n_\alpha + b_i, 0\lest b_i < n_\alpha$. Here, the summation of partition is the usual one, for example, $(5,4,2,2) + (6,3)+ (5,2,2)=(16,9,4,2)$. We view $d_{BV, G_r}^{(n)}(\mfr{p}^\vee)$ as a nilpotent orbit for $\mbf{G}_r(F^{\rm alg})$. This gives the covering Barbasch--Vogan duality for $\ol{G}_r$:
$$d_{BV, G_r}^{(n)}: \mca{N}(\ol{G_r}^\vee) \longrightarrow \mca{N}(\mbf{G}_r(F^{\rm alg})).$$
Note that $n_\alpha=n$ for KP-covers, and $n_\alpha=n/\gcd(n,2)$ for S-covers.

For $G_r$, we know the map $\mca{N}(G_r)\rightarrow\mca{N}(\mbf{G}_r(F^{\rm alg}))$ given by $\mathcal{O} \mapsto \mathcal{O}\otimes F^{\rm alg}$ is a bijection and thus we identify the two sets. 
From the prior work \cite{GLLS, GW}, it is expected that one has
$${\rm WF}(Z(\mfr{m})) = \set{d_{BV, G_r}^{(n)}(\mca{O}(\phi_{{\rm AZ}(Z(\mfr{m}))}))},$$
where ${\rm AZ}$ is the Aubert--Zelevinsky involution discussed as in \cite{GLLS}*{\S 2.3}, and $\phi_\pi$ is the hypothetical L-parameter associated with $\pi$.

To show that Theorem \ref{T:WF} is compatible with the above speculation, in the remaining of this subsection, we consider Kazhdan--Patterson covers exclusively and utilize a version of LLC that arises from the metaplectic correspondence for discrete series given by Flicker, Kazhdan and Patterson \cite{KP84, kazhdan1986towards, flicker1986metaplectic}. Indeed, from loc. cit. we see that every genuine supercuspidal representation $\rho \in \Irrg(\ol{G_r})$ can be lifted to a certain discrete series ${\rm MC}(\rho) \in \Irr(G_r)$. Then it is natural to consider
$$\phi_\rho:=\phi_{{\rm MC}(\rho)}: \W_F \times \SL_2(\C) \longrightarrow G_r(\C).$$

Let $Z(\mfr{m}) \in \Irr_\epsilon(\ol{G_r})$ be associated with $\mfr{m}=(\Delta_1, ..., \Delta_k), \Delta_i = [0, l_i-1]_{\rho_i}$ and $\rho_i$ being a supercuspidal representation. Let $L(\mfr{m})$ be the unique irreducible quotient associated with $\mfr{m}$ as in \cite{kaplan2022classification}. Then we know $Z(\mfr{m}) = {\rm AZ}(L(\mfr{m}))$, which follows from the same argument as in \cite[Proposition A.7]{MS13}.  Also, the metaplectic lifting of $\rho_i$ is of the form
$${\rm MC}(\rho_i) = L(\Delta_i^\sharp), \text{ with } \Delta_i^\sharp = [0, l(\rho_i)-1]_{\rho_i^\sharp},$$
with $\rho_i^\sharp \in {\rm Cusp}(G_{r_i^\sharp}), r_i^\sharp:=r_i^0/l(\rho_i)$. 

For each $i$, let $\phi_{\rho_i^\sharp}$ be the L-parameter of $\rho_i^\sharp$. We call the homomorphism
$$\phi_{L(\mfr{m})}: W_F \times \SL_2(\C) \longrightarrow G_r(\C)$$
given by 
$$\phi_{L(\mfr{m})}:= \bigoplus_{i=1}^k \phi_{\rho_i^\sharp}  \val{\cdot}^{\frac{l_i l(\rho_i)-1}{2}} \boxtimes S_{l_i \cdot l(\rho_i)}$$
the L-parameter of $L(\mfr{m})$. A caveat is that it is not valued in $\ol{G_r}^\vee$ by definition. We will show that the parameter $\phi_{L(\mfr{m})}$ defined above can be lifted through $\eta$ naturally and thus is valued in $\ol{G_r}^\vee$.

First, consider the torus $\mbf{T}_z \subseteq Z(\mbf{G}_r)$ associated with the lattice $c_{n,r} \cdot e_c$, where $c_{n,r}:=n/d_r$ and $e_c:=\sum_{i=1}^r e_i$. It gives the the group $T_z$ of $F$-points, and we know
$$Z(\ol{G_r}) \simeq \ol{T_z}.$$
Let $\omega: Z(\ol{G_r}) \to \C^\times$ be a genuine character.
By choosing a distinguished genuine character $\chi_\psi: Z(\ol{T}) \to \C^\times$ as in \cite[\S 7]{gan2018langlands}, viewed as a character of $Z(\ol{G_r})$ by restriction, we get a linear character
$$\omega^\psi:=\omega/\chi_\psi: T_z \to \C^\times.$$

Consider $L(\mfr{m}) \in \Irrg(\ol{G_r})$, which has a central character $\omega_\mfr{m}$ and thus gives
$$\omega_\mfr{m}^\psi: T_z \to \C^\times.$$
The local Langlands correspondence for $T_z$ gives the L-parameter 
$$\phi_{\omega_\mfr{m}^\psi}: W_F \to \C^\times$$
of $\omega_\mfr{m}^\psi$, which is explicitly given by $\phi_{\omega_\mfr{m}^\psi}(a)=\omega_\mfr{m}^\psi((c_{n,r}\cdot e_c)\otimes {\rm rec}(a))$, where ${\rm rec}:W_F\rightarrow F^{\times}$ is the Artin reciprocity for $F$.
Now we consider the map
$$\tilde{\phi}_{L(\mfr{m})}: W_F \times \SL_2(\C) \longrightarrow G_r(\C) \times G_1(\C)$$
given by
$$\tilde{\phi}_{L(\mfr{m})}(a, g):=(\phi_{L(\mfr{m})}(a, g), \phi_{\omega_\mfr{m}^\psi}(a)).$$

\begin{lm}
The parameter $\tilde{\phi}_{L(\mfr{m})}$ is valued in $\ol{G_r}^\vee$.
\end{lm}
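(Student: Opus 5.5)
We must show that for every $(a,g)\in W_F\times \SL_2(\C)$ one has $\det(\phi_{L(\mfr m)}(a,g))=\phi_{\omega_\mfr{m}^\psi}(a)^{d_r}$. The plan has three steps: compute the left side, compute the right side, and compare them cuspidal block by cuspidal block.

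\textbf{Step 1 (the determinant side).} Since $\det\circ S_m$ is trivial on $\SL_2(\C)$, both sides are trivial on the $\SL_2$-factor. The problem therefore reduces to an identity of characters of $W_F$, equivalently (via ${\rm rec}$) of characters of $F^\times$. Computing directly, we get
$$\det\phi_{L(\mfr m)}=\prod_{i=1}^k\big(\det\phi_{\rho_i^\sharp}\big)^{l_i l(\rho_i)}\,\val{\cdot}^{r_i^\sharp l(\rho_i) l_i(l_il(\rho_i)-1)/2}.$$
By the linear LLC, $\det\phi_{\rho_i^\sharp}$ corresponds to the central character $\omega_{\rho_i^\sharp}$.

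\textbf{Step 2 (the central character side).} Next we compute $\omega_\mfr{m}$ restricted to $Z(\ol{G_r})\simeq \ol{T_z}$. Because $L(\mfr m)$ is a quotient of $\Pi$-type products, $\omega_\mfr m$ is the central character of the product $(L(\Delta_1)\times\dots\times L(\Delta_k))_\omega$, which is exactly the compatible $\omega$. We evaluate $\omega_\mfr m/\chi_\psi$ on the element $(c_{n,r}e_c)(x)$. Writing $c_{n,r}e_c$ blockwise as $\sum_i (n/d_r)e_c^{(r_i)}$, the compatibility condition together with the block-commutativity of $n$-th power determinant elements expresses $\omega_\mfr m^\psi((c_{n,r}e_c)(x))$ as a product over blocks and over the cuspidal twists $\rho_i\nu_{\rho_i}^j$. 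Two issues must be handled here. First, the element $(n/d_r)e_c^{(r_i)}$ need not lie in the center of the $i$-th block. Second, the correction factors from the Hilbert symbol in the KP cocycle and from $\chi_\psi$ must cancel against the corresponding factors for $\chi_\psi$ on each block. For this I would use the explicit formulas for $\chi_\psi$ in \cite{gan2018langlands} and the choice of distinguished characters being compatible with Levi restriction.

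\textbf{Step 3 (reduction to cuspidals, the main obstacle).} After Step 2, the identity reduces to the statement for a single cuspidal $\rho\in\Cusp_\epsilon(\ol{G_{r_0}})$. For such $\rho$ we need that $\omega_\rho$ evaluated on $(n/d_{r_0})e_c$, divided by $\chi_\psi$, raised to the power $d_{r_0}$ equals the determinant of the parameter of ${\rm MC}(\rho)=L([0,l(\rho)-1]_{\rho^\sharp})$. This is precisely the central character compatibility of the Flicker--Kazhdan metaplectic correspondence: the character relation ${\rm MC}$ matches $\omega_\rho$ on $n$-th powers of the center with the central character of ${\rm MC}(\rho)$, twisted by $\chi_\psi$. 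I would invoke \cite{flicker1986metaplectic} and \cite{zou2022metaplectic}, where this normalization of $\chi_\psi$ is used.

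The unramified twists $\nu_{\rho_i}^j$ are then handled by a direct exponent count. The twist $\nu^{j/n(\rho)}=\nu^{jl(\rho)/n}$ evaluated on $(n/d_r)e_c$ gives $\val{x}^{r_0 jl(\rho)/d_r}$. Raised to the power $d_r$ and summed over $j$, this reproduces the exponent from Step 1, using $r_0=r^\sharp l(\rho)$.

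The hardest part is the bookkeeping in Step 2, namely the passage from the full-center element to the blocks, where the $d_{r_i}$ versus $d_r$ mismatch appears. I would first treat $k=1$ and check that the Hilbert-symbol factors vanish on the $n$-th powers involved. I would then argue that in general the ratio between the two sides is a character of $F^\times$ killed by raising to the $d_r$ power on the relevant subgroup.
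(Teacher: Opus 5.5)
Your Step~1 and the closing exponent count are correct, but Step~2 has a genuine gap, and as formulated it cannot be repaired. You try to write the value $\omega_{\mfr{m}}^{\psi}\big((c_{n,r}e_c)\otimes x\big)$ itself as a product of contributions from the blocks and the cuspidal twists. Compatibility, however, only determines $\omega_{\mfr{m}}$ on $Z(\ol{G_r})\cap\ol{G_r^{(n)}}$, and $x^{n/d_r}I_r$ in general lies outside this subgroup.

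In fact that value is in general not determined by the $\rho_i$ at all. Twisting by $\chi\circ\det$ with $\chi\in X^*(F^\times/F^{\times n})$ gives another representative of the same $n$-equivalence class, built from the same cuspidal classes. Yet it multiplies the central character at $x^{n/d_r}I_r$ by $\chi(x^{nr/d_r})$. Since $\gcd(d_r,r)=1$, taking $x=\varpi$ shows this factor can be nontrivial whenever $d_r>1$. So $\phi_{\omega_{\mfr{m}}^{\psi}}$ is pinned down by the cuspidal data only up to a character of order dividing $d_r$. Consequently no block-by-block formula for it can hold, with or without Hilbert-symbol corrections coming from explicit formulas for $\chi_\psi$.

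The obstacles you list are symptoms of this: non-central block components, non-commuting blocks, Hilbert symbols from $\chi_\psi$, and the $d_{r_i}$ versus $d_r$ mismatch. Your final remark about a ratio ``killed by raising to the $d_r$ power'' is not yet an argument.

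The missing idea is to raise to the $d_r$-th power \emph{before} decomposing, which the target identity already does. Since $\omega_{\mfr{m}}^{\psi}$ is a character of the linear torus $T_z$ and $d_rc_{n,r}=n$,
\[
\phi_{\omega_{\mfr{m}}^{\psi}}(a)^{d_r}=\omega_{\mfr{m}}^{\psi}(a^{ne_c})=\omega_{\mfr{m}}(\mbf{s}(a^{ne_c})).
\]
The last equality holds because $\mbf{s}$ is a splitting on $(nY_{G_r})\otimes F^\times$ and $\chi_\psi(\mbf{s}(a^{ny}))=1$ there. The elements $\mbf{s}(a^{ne_{c,i}})$ have determinants in $F^{\times n}$, so they are central in the block covers and commute across blocks. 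Hence
\[
\omega_{\mfr{m}}(\mbf{s}(a^{ne_c}))=\prod_{i=1}^k\prod_{j=0}^{l_i-1}\omega_{\rho_i}(\mbf{s}(a^{ne_{c,i}}))\,\val{a}^{r_i^0 l(\rho_i) j},
\]
with no cocycle corrections at all.

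Your Step~3 should then be stated directly as Flicker's relation $\omega_{\rho_i}(\mbf{s}(a^{ne_{c,i}}))=\omega_{{\rm MC}(\rho_i)}(a^{e_{c,i}})$, rather than via $(n/d_{r_0})e_c$ and a $d_{r_0}$-th power. Combine this with $\omega_{{\rm MC}(\rho_i)}(a^{e_{c,i}})=\det(\phi_{\rho_i^\sharp}(a))^{l(\rho_i)}\val{a}^{r_i^\sharp l(\rho_i)(l(\rho_i)-1)/2}$ and your exponent count, and the proof is complete. This is exactly the paper's argument.
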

\begin{proof}
To save notation, we identify $a$ with ${\rm rec}(a)$, and also write $a^y:=y\otimes a$ for $y\in Y_{G_r}$ and $a$. Write $\val{a}=\val{{\rm rec}(a)}=q^{-v_a}$ for some $v_a\in\Z$.

In view of the definition of $\ol{G_r}^\vee$ for KP covers, it suffices to show that 
\begin{equation} \label{E:eq}
\det(\phi_{L(\mfr{m})}(a, g)) = (\phi_{\omega_\mfr{m}^\psi}(a))^{d_r}.
\end{equation}
By the definition of $\phi_{L(\mfr{m})}$, the left-hand side of \eqref{E:eq} is equal to
$$\prod_{i=1}^k \det(\phi_{\rho_i^\sharp}(a))^{l_i \cdot l(\rho_i)} \cdot q^{-v_ar_i^0l_i \cdot \frac{l_i l(\rho_i)-1}{2}}.$$
To compute the right-hand side, we first recall that there is a natural section $\mbf{s}: T \to \ol{T}$, which is actually a splitting when restricted to $(nY_{G_r})\otimes F^\times \subseteq T$. Also, one property of $\chi_\psi$ (see \cite[\S 7]{gan2018langlands}) is that $\chi_\psi(\mbf{s}(a^{ny}))=1$ for all $y\in Y_{G_r}$ and $a$. This gives that
$$(\phi_{\omega_\mfr{m}^\psi}(a))^{d_r} = \omega_\mfr{m}^\psi(a^{ne_c}) = (\omega_\mfr{m}/\chi_\psi)(a^{ne_c}) = \omega_\mfr{m}(\mbf{s}(a^{ne_c})).$$
For every $i$, let $e_{c,i} \in Y_{G_{r_i^0}}$ be the natural generator of the cocharacter lattice of $Z(G_{r_i^0})$. Since $L(\mfr{m})$ is a constituent of the induced representation
$$\bigtimes_{i,j} \rho_i \val{\cdot}^{j/n(\rho_i)}$$
with $1\lest i \lest k$ and $0\lest j \lest l_i-1$, we see that 
$$\begin{aligned}
 \omega_\mfr{m}(\mbf{s}(a^{ne_c})) = &  \prod_{i=1}^k \prod_{j=0}^{l_i-1} \omega_{\rho_i}(\mbf{s}(a^{ne_{c,i}})) \cdot \val{\mbf{s}(a^{ne_{c,i}})}^{j/n(\rho_i)} \\
 =& \prod_{i=1}^k  \omega_{\rho_i}(\mbf{s}(a^{ne_{c,i}}))^{l_i} \cdot q^{-v_ar_i^0l(\rho_i) \cdot \frac{l_i(l_i-1)}{2}},
 \end{aligned} $$
 where $\omega_{\rho_i}$ means the central character of $\rho_i$. However, the property of metaplectic correspondence shows that (see \cite[Pg. 95-96]{flicker1986metaplectic})
 $$\omega_{\rho_i}(\mbf{s}(a^{ne_{c,i}})) = \omega_{{\rm MC}(\rho_i)}(a^{e_{c,i}}),$$
 where ${\rm MC}(\rho_i) = L([\rho_i^\sharp, \rho_i^\sharp\val{\cdot}, ..., \rho_i^\sharp\val{\cdot}^{l(\rho_i)-1}]) \in \Irr(G_{r_i^0})$. Also, it is easy to see that
 $$\omega_{{\rm MC}(\rho_i)}(a^{e_{c,i}}) = \det(\phi_{\rho_i^\sharp})^{l(\rho_i)} \cdot q^{-v_ar_i^\sharp \cdot \frac{l(\rho_i)(l(\rho_i)-1)}{2}}.$$
Now, using the above three displayed equalities, one can simplify $ \omega_\mfr{m}(\mbf{s}(a^{ne_c}))$ and show that \eqref{E:eq} holds. This completes the proof.
\end{proof}

Thus we have a well-defined map $\tilde{\phi}_{L(\mfr{m})}: \W_F \times \SL_2(\C) \to \ol{G_r}^\vee$, which is called the L-parameter of $L(\mfr{m})$.

\begin{cor} \label{cor WFZmBVdual}
Keep notation as above. We have
$${\rm WF}(Z(\mfr{m})) = \set{d_{BV, G_r}^{(n)}(\mca{O}(\tilde{\phi}_{L(\mfr{m})}))}.$$
\end{cor}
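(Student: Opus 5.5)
The plan is to compare both sides through Theorem \ref{T:WF}, which gives $\WF(Z(\mfr m))=\set{\lambda_\mfr m}$. It then suffices to show the purely combinatorial identity
\[
\lambda_\mfr m = d_{BV,G_r}^{(n)}\big(\mca{O}(\tilde{\phi}_{L(\mfr m)})\big).
\]
First I will compute the orbit $\mca{O}(\tilde\phi_{L(\mfr m)})$. Its image under $\eta$ is the orbit of $\phi_{L(\mfr m)}$, and $\eta$ identifies nilpotent orbits of $\ol{G_r}^\vee$ with those of $G_r(\C)$, because $\ol{G_r}^\vee$ differs from $G_r(\C)$ only by a central torus factor. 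The orbit is therefore given by the restriction of $\phi_{L(\mfr m)}$ to $\SL_2(\C)$. Each summand $\phi_{\rho_i^\sharp}|\cdot|^{(l_il(\rho_i)-1)/2}\boxtimes S_{l_il(\rho_i)}$ has dimension $r_i^\sharp\, l_i l(\rho_i)$. Since $\rho_i^\sharp$ is supercuspidal, $\phi_{\rho_i^\sharp}$ is trivial on $\SL_2$, so this summand contributes $r_i^\sharp$ Jordan blocks of size $l_i l(\rho_i)$. Hence $\mca{O}(\tilde\phi_{L(\mfr m)})$ is the partition
\[
\bigsqcup_{i=1}^k \big( (l_i l(\rho_i))^{r_i^\sharp} \big), \qquad r_i^\sharp = r_i^0/l(\rho_i).
\]

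Next I will apply $d^{(n)}_{BV,G_r}$ with $n_\alpha=n$ for KP-covers. Since $d^{(n)}_{BV}$ is defined as a sum over parts, the $i$-th segment contributes $r_i^\sharp\cdot \mfr s(l_il(\rho_i);n)$. Write $l_i=c_in(\rho_i)+d_i$ with $0\lest d_i<n(\rho_i)$, as in the proof of Theorem \ref{T:WF}, and recall $n(\rho_i)l(\rho_i)=n$. Then $l_il(\rho_i)=c_i n + d_il(\rho_i)$ with $0\lest d_i l(\rho_i)<n$, so
\[
\mfr s(l_il(\rho_i);n)=(n^{c_i},\, d_il(\rho_i)).
\]
Summing $r_i^\sharp$ copies (partition sum, i.e. adding columnwise) gives $\big((r_i^\sharp n)^{c_i},\, r_i^\sharp d_i l(\rho_i)\big) = \big((r_i^0 n(\rho_i))^{c_i},\, r_i^0 d_i\big)$. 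This is exactly $\lambda_{\Delta_i}$, computed in the proof of Theorem \ref{T:WF} as $\WF(Z(\Delta_i))$.

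Finally, summing over $i$ gives $\sum_i\lambda_{\Delta_i}=\lambda_\mfr m$, which is again the identity used in the proof of Theorem \ref{T:WF}. Combined with that theorem, this proves the corollary.

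The main obstacle is bookkeeping rather than substance. One point is checking that the nilpotent orbit attached to $\tilde\phi$ in $\ol{G_r}^\vee$ is faithfully represented by its $\eta$-image partition, so that $d^{(n)}_{BV}$ is applied to the right object. The other is verifying the arithmetic $n(\rho_i)l(\rho_i)=n$ and $d_il(\rho_i)<n$, which is what lets the division of $l_il(\rho_i)$ by $n$ match the decomposition of $l_i$ by $n(\rho_i)$.
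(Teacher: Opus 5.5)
Your proposal is correct and follows the paper's own route: invoke Theorem~\ref{T:WF}, compute $\mca{O}(\tilde{\phi}_{L(\mfr m)})=\big((l_i l(\rho_i))^{r_i^\sharp}\big)_{i}$, and check that $d^{(n)}_{BV,G_r}$ of this orbit equals $\lambda_{\mfr m}$. Your segment-by-segment check, using $l_i l(\rho_i)=c_i n+d_i l(\rho_i)$ and the additivity $\lambda_{\mfr m}=\sum_i\lambda_{\Delta_i}$ from the proof of Theorem~\ref{T:WF}, is somewhat more complete than the paper's, which verifies only the leading part and says the remaining parts are checked similarly.
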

\begin{proof}
We have
$$\mca{O}(\tilde{\phi}_{L(\mfr{m})}) = ((l_1 \cdot l(\rho_1))^{r_1^\sharp}, ..., (l_i \cdot l(\rho_i))^{r_i^\sharp}, ..., (l_k \cdot l(\rho_k))^{r_k^\sharp}).$$
Now a direct computation shows that $\lambda_\mfr{m} = d_{BV, G_r}^{(n)}(\mca{O}(\tilde{\phi}_{L(\mfr{m})}))$. Indeed, if we write 
$$l_i l(\rho_i) = n \alpha_i + \beta_i$$
with $\alpha_i \gest 0$ and $0\lest \beta_i <n$. We get $\alpha_i \gest 1$ if and only if $l_i l(\rho_i) \gest n$, equivalently, $l_i \gest n(\rho_i)$. 
Then the leading numeral of $d_{BV, G_r}^{(n)}(\mca{O}(\tilde{\phi}_{L(\mfr{m})}))$ is equal to
$$\sum_{i=1}^k r_i^\sharp \cdot \min (n, l_i l(\rho_i)) = \sum_{i=1}^k r_i^\sharp \cdot l(\rho_i) \cdot \min (n(\rho_i), l_i) =\sum_{i=1}^k r_i^0 \cdot \min (n(\rho_i), l_i),$$
which is exactly $k_1$. The other numerals in $\lambda_\mfr{m}$ can be checked in a similar way.
\end{proof}

\subsection{Some remarks}
For a partition $\mfr{p}=(p_1, p_2, ..., p_s)$, we write
$${\rm H}(\mfr{p}):=s\quad \text{and}\quad {\rm W}(\mfr{p}):={\rm H}(\mfr{p}^\top)=\max\set{p_i: 1\lest i \lest s}.$$
We can view ${\rm H}(\mfr{p})$ and ${\rm W}(\mfr{p})$ as the height and width (and thus the ``shape'') of the Young diagram associated with $\mfr{p}$. Let $\pi\in \Irr(G_r)$ and consider the L-parameter $\phi_{{\rm AZ}(\pi)}$ of ${\rm AZ}(\pi)$. 
We get
\begin{equation} \label{E:H}
{\rm H}(\mca{O}(\phi_{{\rm AZ}(\pi)})) = \max\set{k: \ \pi^{(k)} \ne 0}.
\end{equation}
On the other hand, if we assume that $\pi$ can always be ``lifted" to a genuine representation $\pi_n$ of KP-cover or S-cover, for every $n$; then from the above disucssion, we expect that 
\begin{equation} \label{E:W}
{\rm W}(\mca{O}(\phi_{{\rm AZ}(\pi)})) = \min \set{n: \pi_n \text{ is generic}}.
\end{equation}
The relations \eqref{E:H} and \eqref{E:W} above can be viewed as conservation-alike, and they hold for Iwahori-spherical representations $\pi$, whenever the parameter $\phi_{{\rm AZ}(\pi)}: \WD_F \to G_r(\C)$ can be lifted through the natural homomorphism $\eta: \ol{G_r}^\vee \to G_r(\C)$ for a KP-cover or S-cover $\ol{G_r}$.

%%%%%%%%%%%%%

\bibliographystyle{alpha}
\bibliography{mybib}

%\begin{bibdiv}
%	\begin{biblist}[\resetbiblist{9999999}]*{labels={alphabetic}}

%	\end{biblist}
%\end{bibdiv}

\end{document}